\def\R{\mathbb{R}}
\def\cL{\mathcal{L}}
\def\al{\alpha}
\def\be{\beta}
\def\ga{\gamma}
\def\de{\delta}
\def\la{\lambda}
\def\si{\sigma}
\def\De{\Delta}
\def\Si{\Sigma}
\def\P{\mathrm{P}}
\newcommand{\der}{{\rm d}}
\newcommand{\newc}{\newcommand}
\newtheorem{theorem}{Theorem}[section]
\newtheorem{lemma}[theorem]{Lemma}
\newtheorem{proposition}[theorem]{Proposition}
\newtheorem{corollary}[theorem]{Corollary}
\theoremstyle{remark}
\newtheorem{remark}[theorem]{\rm\bf Remark}
\newtheorem*{remark*}{\rm\bf Remark}
\newtheorem{example}{Example}[theorem]
\newcommand{\nd}{\nabla}
\newc{\aR}{\mbox{\boldmath{$ R$}}}
\newc{\aS}{\mbox{\boldmath{$ S$}}}
\newc{\aDeR}{\mbox{\boldmath{$ U$}}_B{}^P{}_C{}^Q}
\newc{\aDe}{\mbox{\boldmath$ \Delta$}}
\newc{\aNd}{\mbox{\boldmath$ \nabla$}}
\newc{\aK}{\mbox{\boldmath{$ K$}}}
\newc{\aL}{\mbox{\boldmath{$ L$}}}
\newcommand{\on}[2]{\setbox0=\hbox{$#1$}\setbox1=\hbox{$#2$}%
            \dimen0=\wd0\advance\dimen0 by \wd1\divide\dimen0 by 2%
             \ifdim\wd0>\wd1$#1$\hskip-\dimen0$#2$\advance\dimen0 by -\wd1%
              \else$#2$\hskip-\dimen0$#1$\advance\dimen0 by -\wd0%
             \fi%
            \hskip\dimen0}
\newcommand{\semidownbracefill}{$\m@th\braceld\leaders\vrule\hfill\braceru
  \bracelu\leaders\vrule\hfill\arrowhead$}
\def\sideremark#1{\ifvmode\leavevmode\fi\vadjust{\vbox to0pt{\vss
 \hbox to 0pt{\hskip\hsize\hskip1em
 \vbox{\hsize3cm\tiny\raggedright\pretolerance10000
 \noindent #1\hfill}\hss}\vbox to8pt{\vfil}\vss}}}%
\author{Pawe\l~ Nurowski} 
\address{Centrum Fizyki Teoretycznej,
Polska Akademia Nauk, Al.\ Lotnik\'ow 32/46, 02-668 Warszawa, Poland}
\email{nurowski@cft.edu.pl}
\author{Matthew Randall}
 \address{Leibniz Universit\"at Hannover, Institut f\"ur Differentialgeometrie, Welfengarten 1, D-30167 Hannover}
\email{matthew.randall@math.uni-hannover.de}
\thanks{This research was supported by the Polish National Science Center (NCN) via grant DEC-2013/09/B/ST1/01799.}
\title{Generalised Ricci Solitons}
\begin{document}

\begin{abstract}
We introduce a class of overdetermined systems of partial differential equations of finite type on (pseudo)-Riemannian manifolds that we call the generalised Ricci soliton equations. These equations depend on three real parameters. For special values of the parameters they specialise to various important classes of equations in differential geometry. Among them there are: the Ricci soliton equations, the vacuum near-horizon geometry equations in general relativity, special cases of Einstein-Weyl equations and their projective counterparts, equations for homotheties and Killing's equation. 

We also prolong the generalised Ricci soliton equations and, by computing
differential constraints, we find a number of necessary conditions for a (pseudo)-Riemannian manifold $(M, g)$ to locally admit non-trivial solutions to the generalised Ricci soliton equations in dimensions 2 and 3. 

The paper provides also a collection of explicit examples of generalised Ricci solitons in dimensions 2 and 3 (in some cases).

\end{abstract}

\maketitle

\tableofcontents

\pagestyle{myheadings}
\markboth{Nurowski and Randall}{Generalised Ricci Solitons}

\section{Introduction}

Let $(M^n,g)$ be an oriented smooth manifold of dimension $n$ with (pseudo)-Riemannian metric $g$. Let us consider the system of equations
\begin{equation}\label{gs}
\cL_{X}g=-2c_1 X^\flat \odot X^\flat +2c_2 Ric + 2\la g
\end{equation}
on a vector field $X$. Here $\cL_{X}g$ is the Lie derivative of the metric $g$ with respect to $X$, $X^\flat$ is a 1-form such that $\langle X, X^\flat \rangle=g(X,X)$, {\it Ric} is the Ricci tensor of $g$ and $c_1$, $c_2$ and $\la$ are arbitrary real constants. 
In abstract index notation equation (\ref{gs}) can be rewritten as 
\begin{align}\label{grs0}
\nd_{(a}X_{b)}+c_1 X_a X_b-c_2 R_{ab}= \la g_{ab}
\end{align}
where the 1-form $X_a$, which is not necessarily closed, is given by $X_a=g_{ab}X^b$. Here $\nd$ is the Levi-Civita connection for the metric $g$. 
Let us call (\ref{grs0}) the generalised Ricci soliton equations. We note that there is not a unique way of assigning a name to this class of equations, and our choice is a matter of convenience (some other names were proposed such as GRicci solitons and grolitons). A pair $(g,X)$ is called a generalised Ricci soliton if (\ref{grs0}) is satisfied for some non-zero $X_a$ and metric $g_{ab}$.

If $c_1 \neq 0$, we can redefine $\tilde X_a=c_1 X_a$, so that (\ref{grs0}) is satisfied iff
  \begin{align}\label{grs1}
   \nd_{(a}\tilde X_{b)}+\tilde X_a \tilde X_b-\tilde c_2 R_{ab}=\tilde \la g_{ab}
   \end{align}
   holds, where $\tilde c_2=c_1c_2$ and $\tilde \la=c_1 \la$.
Hence we can redefine constants and study the equations
  \begin{align}\label{grs2}
   \nd_{(a}X_{b)}+X_a X_b-c_2 R_{ab}=\la g_{ab}
   \end{align}
   if $c_1 \neq 0$, and
\begin{align}\label{grs3}
   \nd_{(a}X_{b)}-c_2 R_{ab}=\la g_{ab}
   \end{align}
   if $c_1=0$.

In the case where $c_1=0$, if $c_2 \neq 0$ we can further rescale $X_a$ by $\tilde X_a=-\frac{1}{c_2} X_a$, to set $c_2=-1$. In this case (\ref{grs3}) is satisfied iff
  \begin{align}\label{grs4}
   \nd_{(a}\tilde X_{b)}+ R_{ab}=\tilde \la g_{ab}
   \end{align}
   holds, where $\tilde \la=-\frac{\la}{c_2}$.
This is the classical Ricci soliton equation. If $c_1=0$ and $c_2=0$, equation (\ref{grs3}) reduces to the equation for homotheties. 

\subsection{Motivation}
The generalised Ricci soliton equations (\ref{grs0}) contain in many cases equations of importance and interest in differential geometry.

The $c_1=0$, $c_2=-1$ case is the Ricci solitons with constant $\la$, which is called steady if $\la=0$, expanding if $\la<0$, and shrinking if $\la>0$; see \cite{ricsol} and \cite{ricsol2} for a survey. We note that some definitions of Ricci solitons, such as in \cite{ricsol}, assume completeness of the metric.

The  
$c_1=0$, $c_2=0$ case is the equation for homotheties of the metric, i.e.\ the solutions give rise to homothetic vector fields $X^a$ (conformal Killing with constant divergence $\la$).

The situation with $c_1=c_2=\la=0$ is Killing's equation, the equation determining infinitesimal isometries of the metric.

The $c_1=1$, $c_2=-\frac{1}{n-2}$ cases are special cases of the Einstein-Weyl equation in conformal geometry for $n>2$; see \cite{EW} for definitions.

The $c_1=1$, $c_2=-\frac{1}{n-1}$, $\la=0$ cases are the equations determining whether a metric projective structure admits a skew-symmetric Ricci tensor representative in its projective class. See \cite{thesis}, \cite{skewricci2D} for further details. We remark that coefficient $\frac{1}{n-1}$ appears in the equation because the projective Schouten tensor $\P$ is related to the Ricci tensor of a metric connection by $\P_{ab}=\frac{1}{n-1}R_{ab}$.

The $c_1=1$, $c_2=\frac{1}{2}$ case is the vacuum near-horizon geometry equation. This equation is recently studied in general relativity in the context of existence of extremal black holes. In this equation $\la$ is the cosmological constant of the spacetime. 
See \cite{static}, \cite{Jezierski}, \cite{KL3} for further discussions and refer to Table \ref{GRSexamples}.

 \begin{table}[h]
 \begin{tabular}{|c|c|c|c|} \hline
 Equation & $c_1$ & $c_2$ & $\la$ \\ \hline\hline
 Killing's equation &  $0$ & $0$ & $0$ \\ \hline
 Equation for homotheties & $0$ & $0$ & $\ast$ \\ \hline
 Ricci solitons & $0$ & $-1$ & $\ast$ \\ \hline
 Cases of Einstein-Weyl & $1$ & $-\frac{1}{n-2}$ & $\ast$ \\ \hline
\begin{tabular}{c} Metric projective structures with \\ skew-symmetric Ricci tensor in projective class\end{tabular} & $1$ & $-\frac{1}{n-1}$ & 0  \\ \hline
  Vacuum near-horzion geometry equation & $1$ & $\frac{1}{2}$ & $\ast$ \\ \hline
  \end{tabular}

 \caption{Examples of generalised Ricci solitons}
 \label{GRSexamples}
 \end{table}

The generalised Ricci soliton equations constitute an overdetermined system of PDEs of finite type. They are linear in $X_a$ in the case where $c_1=c_2=0$ (homotheties), non-homogeneous linear in $X_a$ when $c_1=0$ (Ricci solitons), and quadratic in $X_a$ in the case $c_1=1$. The leading term in all these equations have the same symbol as the differential operator $X_a \mapsto \nd_{(a}X_{b)}$.

The first part of the paper (Sections 2-5) is devoted to prolongation of (\ref{grs0}) to derive algebraic constraints and obstructions in dimensions $2$ and $3$ (in some cases),  while the second part (Sections 6-8) is devoted to constructing explicit examples mainly in dimension $2$. In particular, in the second part of the paper we put in a broader context such important examples as the celebrated Hamilton's cigar Ricci soliton (see Proposition \ref{cigar}), extremal Kerr black hole horizon (see Corollary \ref{exkerr}), and reduction of dKP equation to 2 dimensions in Lorentzian signature  (see Proposition \ref{reduceddKP}). We also give some 3 dimensional examples in Example \ref{3dricsol} and in Remark \ref{3dew}.

We raise and lower indices of tensor fields with respect to the metric $g$, and we will not distinguish between $1$-forms and vector fields when convenient.

\section{Generalised Ricci solitons: Prolongation and closed system}

We now prolong equation (\ref{grs0}) to get a closed system. 

If we denote $F_{ab}=\nd_{[a}X_{b]}$, then we get
\begin{equation}\label{gs2}
\nd_{a}X_{b}+c_1 X_aX_b - c_2 R_{ab} =F_{ab}+ \la g_{ab}.
\end{equation}
We shall prolong (\ref{gs2}) and get differential constraints for $g$ to admit a solution $X$ to (\ref{gs2}). We will concentrate on dimensions 2 and 3 (with $c_1=0$ in the 3 dimensional case), although some parts of the prolongation will be valid in any higher dimensions.

In any dimension, the prolongation gives
\begin{equation}\boxed{\begin{aligned}
\nd_{a}X_{b}=&-c_1 X_aX_b +c_2 R_{ab}+F_{ab}+ \la g_{ab}\\
\nd_aF_{bc}=&c_2(\nd_{b}R_{ca}-\nd_c R_{ba})+R_{bc}{}^d{}_aX_d+2c_1 F_{cb}X_a+c_1 X_bF_{ca}-c_1 X_c F_{ba}\\
&+\la c_1 X_b g_{ca}- \la c_1 X_c g_{ba}+c_1c_2 X_b R_{ca}-c_1 c_2 X_c R_{ba}.
\end{aligned}}\label{gspr}\end{equation}
Note that $X_a$ appears quadratically in the closed system. Note also, that both equations of the closed system have inhomogeneous terms; these, such as for example $c_2\nabla_aR_{bc}$, depend on the parameters $c_1,c_2,\lambda$ and geometric quantities associated with the metric, but do not depend on the unknowns $X_a$ and $F_{ab}$.  

With redefined constants, for $c_1=1$, we get
\begin{align*}
\nd_{a}X_{b}=&-X_aX_b +\tilde c_2 R_{ab}+F_{ab}+ \tilde \la g_{ab}\\
\nd_aF_{bc}=&\tilde c_2(\nd_{b}R_{ca}-\nd_c R_{ba})+R_{bc}{}^d{}_aX_d+2 F_{cb}X_a+ X_bF_{ca}-X_c F_{ba}\\
&+\tilde \la X_b g_{ca}- \tilde \la X_c g_{ba}+\tilde c_2 X_b R_{ca}-\tilde c_2 X_c R_{ba},
\end{align*}
while for $c_1=0$, we obtain
\begin{align*}
\nd_{a}X_{b}=&c_2 R_{ab}+F_{ab}+ \la g_{ab}\\
\nd_aF_{bc}=&c_2(\nd_{b}R_{ca}-\nd_c R_{ba})+R_{bc}{}^d{}_aX_d.
\end{align*}

\section{2D generalised Ricci solitons: Theory}\label{2DgRsT}

\subsection{Prolongation and constraints of 2D generalised Ricci soliton equations}
In 2 dimensions, we have 
$R_{abcd}=K(g_{ac}g_{bd}-g_{ad}g_{bc})$, $R_{ab}=\frac{R}{2}g_{ab}=K g_{ab}$, where $R$ is the scalar curvature and $K=\frac{R}{2}$ is the Gauss curvature. We can also use the volume form $\epsilon_{ab}$ to dualise all 2-forms. In the Riemannian signature case we have $\epsilon^{ab}\epsilon_{ac}=\de^b{}_c$, $\epsilon^{ab}\epsilon_{ab}=2$ while in the Lorentzian signature case we have $\epsilon^{ab}\epsilon_{ac}=-\de^b{}_c$ and $\epsilon^{ab}\epsilon_{ab}=-2$, so we take $\epsilon^{ab}\epsilon_{ac}=e \de^b{}_c$, $\epsilon^{ab}\epsilon_{ab}=2 e$ where $e=\{\pm 1\}$ depending on the signature of the metric. We therefore can write $F_{ab}=\frac{e}{2}\epsilon_{ab}F$ where $F=\epsilon^{ab}F_{ab}$. Also note that we have $\epsilon_{ab}\epsilon_{cd}=e(g_{ac}g_{bd}-g_{ad}g_{bc})$.

The second equation of (\ref{gspr}) in 2 dimensions reduces to:
\begin{align*}
\frac{e}{2}\epsilon_{bc}\nd_aF=&c_2(g_{ca}\nd_{b}K-g_{ab}\nd_c K)+K X_b g_{ca}-K X_c g_{ba}-\frac{3e}{2}c_1 \epsilon_{bc} F X_a\nonumber\\
&+\la c_1 X_b g_{ca}- \la c_1 X_c g_{ba}+c_1c_2 K X_b g_{ca}-c_1 c_2 K X_c g_{ba},
\end{align*}
so that contracting throughout with $\epsilon^{bc}$ gives
\begin{align*}
\nd_aF=&2c_2 \epsilon^b{}_a\nd_{b}K+2(\la c_1+K(1+c_1c_2)) X_b\epsilon^{b}{}_a-3c_1 F X_a.
\end{align*}
The prolonged system of the generalised soliton equation (\ref{grs0}) therefore reduces to  
\begin{equation}\label{gssys}
\boxed{\begin{aligned}\nd_{a}X_{b}=&-c_1 X_aX_b +c_2 K g_{ab}+\frac{e}{2}F \epsilon_{ab}+ \la g_{ab}\\
\nd_aF=&-3 c_1 X_a F+2 \epsilon^b{}_a(c_2 \nd_bK+(1+c_1c_2)X_b K+\la c_1 X_b)\end{aligned}}
\end{equation}
in $2$ dimensions.

Let us call $$L_b=c_2 \nd_bK+(1+c_1c_2)X_b K+\la c_1 X_b=c_2 \nd_b K+((1+c_1c_2)K+\la c_1)X_b.$$ Then from differentiating 
\begin{align*}
\nd_aF&=-3 c_1 X_a F+2 \epsilon^b{}_aL_b
\end{align*}
we obtain
\begin{align*}
\nd_b\nd_aF&=-3 c_1 (\nd_bX_a) F-3 c_1 X_a (\nd_b F)+2 \epsilon^c{}_a(\nd_b L_c),
\end{align*}
and upon skewing with the volume form $\epsilon^{ba}$ (recall that $\epsilon^{ba}\nd_{b}X_a=\epsilon^{ba}F_{ba}= F$) we get
\begin{align}\label{const1}
0&=-3 c_1 F^2+6e c_1 X_bL^b+2e\nd_b L^b.
\end{align}
Now a further computation yields that
\begin{align*}
\nd_eL_a=&c_2 \nd_e\nd_aK+(1+c_1c_2)(\nd_eX_a)K+\la c_1 (\nd_e X_a)+(1+c_1c_2)X_a\nd_eK,
\end{align*}
from which we obtain by tracing indices
\[
\nd_aL^a=c_2 \De K+(1+c_1c_2)(\nd_aX^a)K+\la c_1 (\nd_a X^a)+(1+c_1c_2)X^a\nd_aK.
\]

Since
\[
\nd_aX^a=-c_1 X_aX^a+2 c_2 K+2 \la, 
\]
this gives
\begin{align*}
\nd_aL^a=&c_2 \De K+(1+c_1c_2)(-c_1 X_aX^a+2 c_2 K+2 \la)K\\
&+\la c_1 (-c_1 X_aX^a+2 c_2 K+2 \la)+(1+c_1c_2)X^a\nd_aK,
\end{align*}
and so (\ref{const1}) is given by
\begin{align*}
0=&-3 c_1 F^2+6 e c_1 (c_2 X^b\nd_bK+(1+c_1c_2)X^bX_b K+\la c_1 X^bX_b)\\
&+2e\bigg(c_2 \De K+(1+c_1c_2)(-c_1 X_aX^a+2 c_2 K+2 \la)K\\
&+\la c_1 (-c_1 X_aX^a+2 c_2 K+2 \la)+(1+c_1c_2)X^a\nd_aK\bigg).
\end{align*}

Collecting like terms together we obtain the first differential constraint as
\begin{equation}\boxed{\begin{aligned}
-3 c_1 &F^2+4 e c_1 \big((1+c_1c_2)K+\la c_1\big) X^bX_b +2e(1+4c_1c_2)X^a\nd_aK\\
&+2e\bigg(c_2 \De K+\big((1+c_1c_2)K+\la c_1\big)(2 c_2 K+2 \la)\bigg)=0.
\end{aligned}}\label{gscons}\end{equation}

We can differentiate (\ref{gscons}) further and use the closed system to obtain a second constraint, from which we try to solve for $X_a$. It turns out that to derive this second constraint is tough and technically demanding, and so it might be more worthwhile to look at certain special cases instead. 
 
\begin{remark}
From the first constraint we see that in order for the quadratic term $X^aX_a$ to vanish, we need either $c_1=0$ or $K$ is of constant curvature with $K=-\frac{\la c_1}{1+c_1c_2}$. In the second situation, this implies $\nd_a K=0$ (and hence $\De K=0$), and the differential constraint reduces to 
\[
0=-3 c_1 F^2, 
\]
so that $F=0$ if $c_1 \neq 0$. We also see that the linear term involving $X_a\nd^a K$ vanishes at the critical value of $1+4 c_1c_2=0$. 
\end{remark}

\begin{remark}\label{critical}
When $1+4c_1c_2=0$, the term involving $X^a\nd_aK$ in (\ref{gscons}) vanishes and the equation reduces to  
\begin{align*}
&2e\bigg(c_2 \De K+((1+c_1c_2)K+\la c_1)(2 c_2 K+2 \la)\bigg)\\
&-3 c_1 F^2+4e c_1 ((1+c_1c_2)K+\la c_1) X^bX_b=0.
\end{align*}
Setting $c_1=1$ and $c_2=-\frac{1}{4}$, we obtain
\begin{align*}
-\frac{e}{2} \De K+\frac{e}{4}(3 K+4\la)(4 \la-K)-3 F^2+ e(3 K+4\la ) X^bX_b=0.
\end{align*}
Further setting $\la=0$, and taking $e=1$ in the Riemannian setting gives the differential constraint
\begin{align}\label{specialc2}
\De K+\frac{3}{2}K^2+6 F^2- 6 K X^bX_b=0.
\end{align}
For an explicit example in Lorentzian signature see the end of Section \ref{nullc11}.
\end{remark}

\subsection{2D gradient generalised Ricci soliton}\label{ggrs2d}
We call a solution $(g, X)$ to (\ref{grs0}) a gradient generalised soliton if $F_{ab}=\nd_{[a}X_{b]}=0$. In such case $X_a=\nd_a f$ for some function $f$ locally.

In 2 dimensions, in the case when $F=0$, the second equation in the prolonged system (\ref{gssys}) forces $L_a$ to vanish, i.e. 
\begin{align*}
\boxed{c_2 \nd_b K+((1+c_1c_2)K+\la c_1)X_b=0.}
\end{align*}
In this case 
$X_a$ is necessarily given by
\begin{align}\label{ggRsal}
X_a=-\frac{c_2 \nd_a K}{(1+c_1c_2)K+\la c_1}.
\end{align}
We therefore have
\begin{proposition}\label{gradientprop}
Let $(M,g)$ be a (pseudo)-Riemannian 2-manifold, with the Gauss curvature $K$ of $g$ not equal to $-\frac{\la c_1}{1+c_1 c_2}$. Then in order for $M$ to admit a solution to the gradient generalised Ricci soliton equations, we must necessarily have (\ref{grs0}) satisfied for $X_a$ given by (\ref{ggRsal}). Conversely, suppose that equation (\ref{grs0}) is satisfied for some $X_a$ given by (\ref{ggRsal}), then $M$ admits a solution to the gradient generalised Ricci soliton equations.  
\end{proposition}
As a Corollary to Proposition \ref{gradientprop}, we have:
\begin{corollary}
The local obstruction for a 2-dimensional (pseudo)-Riemannian metric $g$ with $K\neq-\frac{\la c_1}{1+c_1 c_2}$ to admit a gradient generalised Ricci soliton $(g,X)$ is the obstruction tensor $\Theta_{ab}$ given by
\end{corollary}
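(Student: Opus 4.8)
The plan is to substitute the canonical vector field $X_a = -\dfrac{c_2 \nd_a K}{(1+c_1c_2)K+\la c_1}$ from \eqref{ggRsal} back into the original generalised Ricci soliton equation \eqref{grs0}, and read off the residual tensor that must vanish identically; this residual \emph{is} the obstruction $\Th_{ab}$. By Proposition \ref{gradientprop}, a gradient solution exists precisely when \eqref{grs0} holds for this particular $X_a$, so the obstruction is simply
\[
\Th_{ab} := \nd_{(a}X_{b)} + c_1 X_a X_b - c_2 K g_{ab} - \la g_{ab}
\]
evaluated at \eqref{ggRsal} (using $R_{ab}=Kg_{ab}$ in 2D). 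The content of the corollary is therefore to make this explicit in terms of $K$ and its first and second covariant derivatives alone.

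First I would introduce the shorthand $\Phi := (1+c_1c_2)K+\la c_1$, so that $X_a = -\dfrac{c_2}{\Phi}\nd_a K$, and note $\nd_a \Phi = (1+c_1c_2)\nd_a K$. Then I would compute $\nd_{(a}X_{b)}$ by the quotient rule:
\[
\nd_a X_b = -\frac{c_2}{\Phi}\nd_a\nd_b K + \frac{c_2(1+c_1c_2)}{\Phi^2}\nd_a K\,\nd_b K,
\]
which is already symmetric in $a,b$ since $\nd_a\nd_b K$ is (the Hessian of a scalar on a manifold with symmetric connection), so $F_{ab}=0$ automatically — consistent with the gradient hypothesis. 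Next, $c_1 X_a X_b = \dfrac{c_1 c_2^2}{\Phi^2}\nd_a K\,\nd_b K$. Adding these and subtracting $(c_2 K + \la)g_{ab}$ and clearing the denominator $\Phi^2$ gives the obstruction tensor, up to the nonvanishing scalar factor $\Phi^2$, as
\[
\Th_{ab} = -c_2\,\Phi\,\nd_a\nd_b K + c_2\bigl((1+c_1c_2)+c_1 c_2\bigr)\nd_a K\,\nd_b K - (c_2 K + \la)\,\Phi^2\, g_{ab},
\]
i.e. after simplifying the middle coefficient to $c_2(1+2c_1c_2)$,
\[
\Th_{ab} = -c_2\bigl((1+c_1c_2)K+\la c_1\bigr)\nd_a\nd_b K + c_2(1+2c_1c_2)\,\nd_a K\,\nd_b K - (c_2 K+\la)\bigl((1+c_1c_2)K+\la c_1\bigr)^2 g_{ab}.
\]
One should check this is trace-reversed-free or whether the trace part is automatically implied; taking the $g^{ab}$-trace reproduces (a multiple of) the scalar constraint \eqref{gscons} specialised to $F=0$, so the genuinely new information is the trace-free part, but stating $\Th_{ab}$ in full is cleanest.

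The main obstacle is purely bookkeeping: keeping the powers of $\Phi$ straight when clearing denominators, and deciding the normalisation (whether to present $\Th_{ab}$ with the $\Phi^2$ factor absorbed or not). There is no real analytic difficulty — everything reduces to the quotient rule for $\nd_a$ applied to a gradient, plus the 2D identity $R_{ab}=Kg_{ab}$ and the divergence formula $\nd_a X^a = -c_1 X_a X^a + 2c_2 K + 2\la$ already recorded above, which can be used as a consistency check on the trace. I would close by remarking that in the special cases of Table \ref{GRSexamples} — e.g. $c_1=0$, $c_2=-1$ for Ricci solitons, where $\Phi = K$ — the formula collapses to the familiar obstruction $K\nd_a\nd_b K - \nd_a K \nd_b K + K^2(K-\la)g_{ab} = 0$ (up to sign and the factor $K$), tying the result back to the motivating examples.
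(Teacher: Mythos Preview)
Your proposal is correct and follows exactly the route the paper takes (implicitly): substitute \eqref{ggRsal} into \eqref{grs0} and read off the residual. The only difference is cosmetic --- the paper leaves the denominators $\Phi$ and $\Phi^2$ in place and states
\[
\Theta_{ab}=-\frac{c_2 \nd_a\nd_b K}{(1+c_1c_2)K+\la c_1}+\frac{c_2(1+2 c_1c_2)\nd_a K \nd_b K}{((1+c_1 c_2)K+\la c_1)^2}-(c_2 K+\la) g_{ab},
\]
whereas you clear through by $\Phi^2$; since $\Phi\neq 0$ by hypothesis the two versions are equivalent obstructions.
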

\begin{align*}
\Theta_{ab}=-\frac{c_2 \nd_a\nd_b K}{(1+c_1c_2)K+\la c_1}+\frac{c_2(1+2 c_1c_2)\nd_a K \nd_b K}{((1+c_1 c_2)K+\la c_1)^2}-(c_2 K+\la) g_{ab}. 
\end{align*}
This vanishes if and only if the pair 
\[
(g,X)=\left(g,-\frac{c_2 \nd_aK}{(1+c_1c_2)K+\la c_1}\right)
\]
is a gradient generalised Ricci soliton.
\begin{example}
The metric on $\R^2$ given by
\begin{align*}
g=\frac{y^4+6}{6}\der x^2+\frac{6}{y^4+6}\der y^2
\end{align*}
cannot lead to a gradient generalised Ricci soliton $(g,X)$ for generic chosen values of $c_1=1$, $c_2=1$, $\la=1$. For this example, $K=-y^2$, and our formula for $X$ yields
\[
X=-\frac{2y}{2y^2-1}\der y,
\]
so that $\der X=0$
and plugging this formula for $X$ back into the generalised Ricci soliton equation gives
\begin{align*}
\Theta_{ab}\der x^a \der x^b=\frac{4y^8-9y^6+27y^4-54y^2+18}{18(2y^2-1)}\der x^2+\frac{6(4y^6-7y^4+13y^2+1)}{(y^4+6)(2y^2-1)^2}\der y^2
\end{align*} 
which is non-zero. 
\end{example}

\subsection{2D Ricci Solitons}{\bf The case where $c_1=0$, $c_2=-1$.} Note that if $g$ is a metric of constant curvature, the generalised Ricci soliton equation with $c_1=0$, $c_2=-1$ is just an equation for homotheties. We therefore exclude the pairs $(g,X)$ such that $g$ is a metric of constant curvature from the analysis in this section. Since these solutions belong to the category of homotheties, we will discuss them in Section \ref{homotheties}.

For the convenience of the reader we present the closed system (\ref{gssys}) for the generalised Ricci solitons and its integrability conditions (\ref{gscons}) specialised to the case $c_1=0$ and $c_2=-1$. This gives the equations describing the proper Ricci solitons:
\begin{equation}\label{ricsoleq}
\boxed{\begin{aligned}\nd_{a}X_{b}=&- K g_{ab}+\frac{e}{2}F \epsilon_{ab}+ \la g_{ab}\\
\nd_aF=&2 \epsilon^b{}_a(-\nd_bK+X_b K)\end{aligned}}
\end{equation}
\begin{equation}\label{ricsolconst}\boxed{\begin{aligned}
X^a\nd_aK- \De K+2 K \la-2 K^2=0.
\end{aligned}}\end{equation}    
Excluding the constant curvature case, the proper Ricci solitons can be characterised by the following Theorem.
\begin{theorem}\label{ricsolthm}
A 1-form $X$ defining a 2-dimensional Ricci soliton $(g,X)$ is of the form
\begin{align*}
X_a=\frac{1}{\rho}\left(-2(\nd^c K)\nd_c \De K+2(3\la-5 K)M\right)\epsilon_{ab}\nd^b K+\frac{1}{\rho}\left(\De K+2K^2-2 K\la\right)\epsilon_{ab}\nd^b M,
\end{align*}
where $K$ is the Gauss curvature of $g$, 
\[M=g^{ab}\nd_aK \nd_bK\quad{and}\quad\rho=\epsilon^{ab}\nd_aK\nd_bM,\]
provided that $\rho \neq 0$.

If $\rho=0$, then $X$ is of the form
\begin{align*}
X_c=-\frac{1}{\nu}\left(\epsilon^{ab}\nd_a \De K\nd_b K-\frac{1}{2}MF\right)\epsilon_{cd}\nd^dK+\frac{1}{\nu}\big(\De K+2 K^2-2 K \la\big)\epsilon_{cd}N^d,
\end{align*}
where 
\[N_c=(\nd_a\nd_cK)\epsilon^{ab}\nd_bK\quad and\quad\nu=\epsilon^{ab}\nd_aKN_b,\] provided that $\nu \neq 0$.

If $\nu=0$, in the Riemannian case, $e=1$, the vector $X$ is locally a gradient and this situation is a specialisation of the results of Proposition \ref{gradientprop} with $c_1=0$ and $c_2=-1$.
\end{theorem}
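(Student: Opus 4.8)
The plan is to treat the constraint~(\ref{ricsolconst}) together with the closed system~(\ref{ricsoleq}) as a source of successive algebraic relations obtained by covariant differentiation, and then to solve the resulting linear system for the two components of $X_a$. In two dimensions any $1$-form can be expanded in the coframe $\{\nd_a K, \epsilon_{ab}\nd^b K\}$ provided $\nd K$ is non-null, or more cleverly in whatever pair of $1$-forms turns out to be linearly independent; the three cases of the theorem correspond exactly to which such pair is available. First I would differentiate the scalar constraint~(\ref{ricsolconst}), substitute $\nd_a X_b$ from the first line of~(\ref{ricsoleq}) and $\nd_a F$ from the second, and thereby obtain a $1$-form equation of the schematic shape $A\,\nd_a K + B\,\epsilon_{ab}\nd^b K = (\text{known curvature terms})$, where $A$ and $B$ are expressions built from $K$, $\De K$, $\la$ and contractions of $\nd K$ with itself and with $\nd\De K$. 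Contracting this $1$-form equation once with $\nd^a K$ and once with $\epsilon^{ac}\nd_c K$ produces two scalar equations; but to isolate $X_a$ itself one needs a genuinely $1$-form-valued relation, so the real step is to write $X_a$ as an unknown combination $\al\,\epsilon_{ab}\nd^b K + \be\,\epsilon_{ab}\nd^b M$ (with $M = \nd_a K\,\nd^a K$) and determine $\al,\be$ by taking inner products with $\nd^a K$ and with $\nd^a M$.

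The determinant that governs solvability of that $2\times 2$ system is precisely $\rho = \epsilon^{ab}\nd_a K\,\nd_b M$, which is why the first branch of the theorem requires $\rho\neq 0$. Concretely: pairing the differentiated constraint (and, if needed, a second differentiation) with $\nd^a K$ gives one scalar relation in which the coefficient of $X^a\nd_a M$-type terms is controlled, pairing with $\nd^a M$ gives another, and Cramer's rule then yields the stated formula for $X_a$, with $-2(\nd^c K)\nd_c\De K + 2(3\la - 5K)M$ and $\De K + 2K^2 - 2K\la$ emerging as the two numerators. The appearance of $\De K + 2K^2 - 2K\la$ is not accidental: by~(\ref{ricsolconst}) it equals $X^a\nd_a K$, so one must be careful that this quantity is being used as a known geometric function of $g$ via the constraint, not as something involving the unknown. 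I would organize the algebra so that every occurrence of $X^a\nd_a K$, $X^a X_a$ and $F$ is eliminated using~(\ref{ricsolconst}) and the relations $\nd_a X^a = -2K + 2\la$, $\epsilon^{ab}\nd_a X_b = F$ before solving.

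When $\rho = 0$ the coframe $\{\epsilon_{ab}\nd^b K,\ \epsilon_{ab}\nd^b M\}$ degenerates, so one replaces $\nd_a M$ by the next available $1$-form obtained by differentiating the system once more, namely $N_c = (\nd_a\nd_c K)\epsilon^{ab}\nd_b K$; the new solvability determinant is $\nu = \epsilon^{ab}\nd_a K\,N_b$, and re-running Cramer's rule with this pair gives the second formula, now with an $F$-dependent numerator $\epsilon^{ab}\nd_a\De K\,\nd_b K - \tfrac12 MF$ (the term $\tfrac12 MF$ arising when $\nd_a F$ is eliminated through the second line of~(\ref{ricsoleq})). Finally, if both $\rho = 0$ and $\nu = 0$ the available $1$-forms are all proportional to $\nd_a K$, which in the Riemannian case $e=1$ forces $\epsilon_{ab}\nd^b K$ and $X_a$ to be proportional to $\nd_a K$ up to the $\epsilon$-twist collapsing, hence $F = \epsilon^{ab}\nd_a X_b = 0$; one then invokes Proposition~\ref{gradientprop} with $c_1 = 0$, $c_2 = -1$ to conclude $X_a = \nd_a f$. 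The main obstacle I anticipate is bookkeeping rather than conceptual: one differentiation of~(\ref{ricsolconst}) may not suffice to get a $1$-form relation independent enough to invert, so I expect to differentiate twice and then carefully use the first constraint and the contracted Bianchi-type identities in 2D to keep the curvature terms from proliferating; verifying that the second differentiation does not introduce genuinely new obstructions (i.e. that the system really closes at this order) is the delicate point, and it is presumably what forces the explicit case split on $\rho$ and $\nu$ rather than a single uniform formula.
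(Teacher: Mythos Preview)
Your overall strategy matches the paper's exactly: differentiate the scalar constraint~(\ref{ricsolconst}) once, use the closed system~(\ref{ricsoleq}) to replace $\nd_a X_b$, and then contract the resulting $1$-form equation against two independent directions to obtain two scalar relations $X^a\nd_a K = \De K + 2K^2 - 2K\la$ and $X^a\nd_a M = 2(\nd^c K)\nd_c\De K - 2(3\la-5K)M$; solving this $2\times 2$ linear system in the basis $\{\epsilon_{ab}\nd^b K,\ \epsilon_{ab}\nd^b M\}$ gives the first formula, and contracting instead with $\epsilon^{ab}\nd_b K$ produces $X^c N_c$ and the second formula. Two small corrections: first, a \emph{single} differentiation of~(\ref{ricsolconst}) is enough---no second differentiation is needed, and no $X^aX_a$ terms ever appear since $c_1=0$. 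Second, your sketch of the $\rho=\nu=0$ case is too loose: it is not that ``$X_a$ is forced to be proportional to $\nd_a K$'', but rather that one shows, via the identity $\nu = e(\tfrac{\ell}{2}-\De K)M$ (where $\nd_a M=\ell\,\nd_a K$) and the Riemannian positivity $M>0$, that $\epsilon^{ab}\nd_a K\,\nd_b\De K=0$ and then $N_a=0$ outright; feeding this into the relation $X^c N_c = \epsilon^{ab}\nd_a\De K\,\nd_b K - \tfrac{1}{2}MF$ yields $MF=0$ and hence $F=0$. The Riemannian hypothesis enters precisely to guarantee $M>0$ so that one can divide by it.
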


\begin{proof}
For the $c_1=0$ equations, we have
\begin{align*}
0=&c_2 (\De K+2K^2)+2 K\la +X^a\nd_aK,
\end{align*}
and we can set $c_2=-1$ to get
\begin{align}\label{ricconst1}
X^a\nd_aK=(\De K+2K^2)-2 K\la.
\end{align}
Differentiating this gives
\begin{align}\label{paw}
0=&-\nd_a \De K-4 K \nd_a K+2 (\nd_a K) \la +(\nd_a X^b)\nd_bK+X^b \nd_a \nd_b K\nonumber\\
=&-\nd_a \De K-4 K \nd_a K+2 (\nd_a K) \la +(\frac{e}{2}\epsilon_a{}^bF+\de_a{}^b (\la- K))\nd_bK+X^b \nd_a \nd_b K,
\end{align}
upon which contracting by $\nd^a K$ gives
\begin{align*}
0=&-(\nd^a K)\nd_a \De K-4 K \nd^a K\nd_a K+2 \nd^a K(\nd_a K) \la\\ &+((\la- K))\nd^b K \nd_bK+X^b \nd^a K \nd_a \nd_b K\\
=&- (\nd^a K)\nd_a \De K+(3\la-5 K)\nd^b K \nd_bK+X^b \nd^a K \nd_a \nd_b K.
\end{align*}
Let us call $(\nd_a K)(\nd^a K)=M$, so that
$\nd_a M=2(\nd_b K) (\nd_a \nd^b K)$.
Then we have the second constraint given by
\begin{align}\label{ricconst2}
X_a \nd^a M=2(\nd^a K)\nd_a \De K-2(3\la-5 K)M.
\end{align}
Now assuming $\rho:=\epsilon^{ab}\nd_a K \nd_b M \neq 0$, the vectors $\epsilon^a{}_{b}\nd^b K$ and $\epsilon^a{}_{b}\nd^b M$ form a basis and
the equations (\ref{ricconst1}) and (\ref{ricconst2}) give the components of $X_a$ in this basis. We get: 
\begin{align*}
X_a=\frac{1}{\rho}\left(-2(\nd^c K)\nd_c \De K+2(3\la-5 K)M\right)\epsilon_{ab}\nd^b K+\frac{1}{\rho}\left(\De K+2K^2-2 K\la\right)\epsilon_{ab}\nd^b M,
\end{align*}
so that $X_a$ is completely determined by invariants of the metric. Then, plugging $X_a$ from this formula back into equation (\ref{grs0}) with $c_1=0$, $c_2=-1$ gives us local if-and-only-if obstructions for the metric to admit any Ricci soliton $(g,X)$.

There is an alternative formula for $X$, again determined by invariants of the metric, which is convenient for us to use when $\rho=0$. To get it we first need a formula
\begin{align}\label{paw1}
0=&-\epsilon^{ab}\nd_a \De K\nd_b K+\frac{1}{2}MF+X^c \epsilon^{ab}(\nd_a \nd_c K)\nd_b K,
\end{align}
which is obtained by contracting the differential constraint (\ref{paw})
with $\epsilon^{ab}\nd_bK$. Then we define 
\[
N_c=(\nd_a\nd_cK)\epsilon^{ab}\nd_bK,
\]
and obtain its projection 
\begin{align}\label{pav2}
X^cN_c=&\epsilon^{ab}\nd_a \De K\nd_b K-\frac{1}{2}MF
\end{align}
onto $X$,  
by (\ref{paw1}). We also have 
\begin{align*}
X^c\nd_cK=&\De K+2 K^2-2 K \la
\end{align*}
by (\ref{ricconst1}).

Using these projections and assuming
\begin{align*}
\nu:=&\epsilon^{ab}\nd_aKN_b\neq 0,
\end{align*}
we express $X_c$ in the basis given by $\epsilon_{cd}N^d$ and $\epsilon_{cd}\nd^dK$ obtaining:
\begin{align*}
X_c=-\frac{1}{\nu}\left(\epsilon^{ab}\nd_a \De K\nd_b K-\frac{1}{2}MF\right)\epsilon_{cd}\nd^dK+\frac{1}{\nu}\big(\De K+2 K^2-2 K \la\big)\epsilon_{cd}N^d.
\end{align*}
In the situation where $\rho=0$, we can use the above formula to compute $X$ provided $\nu \neq 0$. 

In the case where $\rho=0$ and $\nu=0$ we can write 
\[
\nd_aM=\ell \nd_a K 
\]
for some function $\ell$.
Also note that in such case $\nu$ can be expressed in terms of $\ell$, $\De K$ and $M$ as follows:
\begin{align*}
\nu=&\epsilon^{ab}\nd_aK(\nd_b\nd_eK)(\epsilon^{ed}\nd_dK)\\
=&e(g^{ae}g^{bd}-g^{ad}g^{be})\nd_aK\nd_dK(\nd_b\nd_eK)\\
=&e((\nd^aK)(\nd^bK)\nd_a\nd_bK-M\De K)\\
=&e(\frac{1}{2}\nd^aK\nd_aM-M\De K)\\
=&e\left(\frac{\ell }{2}-\De K\right)M.
\end{align*}

To complete the proof we have to prove that in the Riemannian $e=1$ case, with $\rho=0$ and $\nu=0$, we have $F=0$. 

For this we will use formula (\ref{pav2}), and first show that  
\begin{align}\label{pav3}
\epsilon^{ab}\nd_aK \nd_b \De K=0,
\end{align}
and then show that $X^cN_c=0$.

Indeed, since $\rho=0$ we have $\nd_aM=\ell \nd_aK$, so by differentiating and anti-symmetrising, we get that 
\begin{align}\label{pav4}
\epsilon^{ab}\nd_aK \nd_b\ell=0. 
\end{align}
On the other hand, the assumption about the Riemannian signature ($e=1$), implies
\[
M=\nd_aK \nd^aK >0,
\] 
because we excluded the constant curvature case. Hence the condition $\nu=0$
gives $\ell=2 \De K$. This, when compared with (\ref{pav4}), gives (\ref{pav3}), as claimed. 

Moreover $\nu=0$ gives:
\[
N_c=j \nd_cK 
\]   
for some function $j$,
which implies
\[
0=\frac{1}{2}\epsilon^{ab}\nd_aM \nd_b K=\nd^cK(\nd_a\nd_cK)\epsilon^{ab}\nd_bK=j\nd_cK\nd^cK=jM. 
\]
Therefore, the fact that $M>0$ is non-zero, implies $j=0$, so $N_a=0$. Now, (\ref{pav2}) gives $F=0$, again by the Riemannian condition $M>0$. Hence $X_a$ is a gradient. 
\end{proof}

\begin{corollary}
The local obstruction for a 2-dimensional (pseudo)-Riemannian metric g with $\rho \neq 0$ to admit a Ricci soliton $(g, X)$ is the obstruction tensor $\Theta^{(1)}_{ab}$ given by
\[
\Theta^{(1)}_{ab}=\nd_{(a} X^{(1)}_{b)}+K g_{ab}-\la g_{ab}
\] 
where 
\begin{align*}
X^{(1)}_a=\frac{1}{\rho}\left(-2(\nd^c K)\nd_c \De K+2(3\la-5 K)M\right)\epsilon_{ab}\nd^b K+\frac{1}{\rho}\left(\De K+2K^2-2 K\la\right)\epsilon_{ab}\nd^b M.
\end{align*}
The obstruction tensor $\Theta^{(1)}_{ab}$ vanishes if and only if
\[
(g,X)=(g,X^{(1)})
\]
is a Ricci soliton. 
\end{corollary}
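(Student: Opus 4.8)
The plan is to deduce this corollary directly from Theorem~\ref{ricsolthm}, since all the substantive work — pinning down the $1$-form $X$ — is already contained there. First I would record the precise shape of the Ricci soliton equation in dimension two: specialising (\ref{grs0}) to $c_1=0$, $c_2=-1$ gives $\nd_{(a}X_{b)}+R_{ab}=\la g_{ab}$, and in two dimensions $R_{ab}=K g_{ab}$, so the equation reads
\[
\nd_{(a}X_{b)}+K g_{ab}-\la g_{ab}=0 .
\]
Hence, by construction, $\Theta^{(1)}_{ab}$ is exactly the left-hand side of this equation with the unknown $X$ replaced by the candidate $X^{(1)}$; moreover $X^{(1)}$ is a bona fide $1$-form, assembled from curvature invariants of $g$, precisely because the hypothesis $\rho\neq 0$ makes the scalar $\rho=\epsilon^{ab}\nd_aK\nd_bM$ invertible. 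I should note that $\rho$ depends only on $g$, so ``$\rho\neq 0$'' is a condition on the metric alone.

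Next I would prove the ``if'' direction. If $\Theta^{(1)}_{ab}=0$, then the displayed identity says that $(g,X^{(1)})$ satisfies the $c_1=0$, $c_2=-1$ case of (\ref{grs0}). It remains to observe that $X^{(1)}$ is not identically zero, so that this is a genuine (proper) Ricci soliton in the sense of the paper: were $X^{(1)}\equiv 0$, then $\Theta^{(1)}_{ab}=0$ would force $K=\la$ to be constant, whence $\nd_aK=0$, $M=0$ and $\rho=0$, contradicting $\rho\neq 0$. The same remark shows the constant curvature case is automatically excluded by the hypothesis, so there is no overlap with the homothety situation. Therefore $(g,X^{(1)})$ is a Ricci soliton.

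For the ``only if'' direction I would invoke Theorem~\ref{ricsolthm}: if $g$ locally admits any Ricci soliton $(g,X)$ with $\rho\neq 0$, then that theorem forces $X$ to coincide with $X^{(1)}$; feeding $X=X^{(1)}$ back into the soliton equation and using the displayed identity once more yields $\Theta^{(1)}_{ab}=0$. Combining the two directions, a metric $g$ with $\rho\neq 0$ admits a Ricci soliton locally if and only if $\Theta^{(1)}_{ab}$ vanishes, and in that case the soliton is $(g,X^{(1)})$ — which is precisely the statement of the corollary. I do not expect any real obstacle here: the argument is a formal consequence of Theorem~\ref{ricsolthm} together with the two-dimensional identity $R_{ab}=K g_{ab}$, the only points worth a line of care being that $\rho\neq 0$ is a condition on the metric and that it simultaneously rules out $X^{(1)}\equiv 0$ and constant curvature.
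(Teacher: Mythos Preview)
Your proposal is correct and is exactly the intended argument: the paper states this corollary without a separate proof, treating it as an immediate consequence of Theorem~\ref{ricsolthm} together with the two-dimensional identity $R_{ab}=Kg_{ab}$, which is precisely what you spell out. Your extra remarks that $\rho\neq 0$ excludes constant curvature and forces $X^{(1)}\not\equiv 0$ are welcome clarifications but not additional input beyond what the paper assumes.
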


\begin{remark}\label{fremark}
When $\rho=0$ and $\nu \neq0$, the formula for $X_a$ obtained in the proof of Theorem \ref{ricsolthm} given by
\begin{align}\label{xf1}
X_c=-\frac{1}{\nu}\left(\epsilon^{ab}\nd_a \De K\nd_b K-\frac{1}{2}MF\right)\epsilon_{cd}\nd^dK+\frac{1}{\nu}\big(\De K+2 K^2-2 K \la\big)\epsilon_{cd}N^d
\end{align}
still involves the unknown quantity $F$. To solve for $F$, we can substitute $X_c$ given by (\ref{xf1}) back
into the first three Ricci soliton equations (\ref{ricsoleq}) and use the last two equations (\ref{ricsoleq}) to eliminate derivatives of $F$ that appear. The first three equations will then only involve differential invariants of the metric and $F$. The function $F$ can thus be algebraically determined in terms of the metric invariants, by solving one of these equations. Now the substitution of this $F$ back into (\ref{xf1}) determines $X_c$ completely. To obtain obstructions in this case one inserts $X_c$ back into the two Ricci soliton equations (out of the first three equations (\ref{ricsoleq})) which were not used in determining $F$. We therefore will have at most $2$ scalar local obstructions for $g$ with $\rho=0$, $\nu \neq0$ to admit a Ricci soliton. We also remark that another procedure for finding obstructions may be more convenient to use. This consists in inserting $X$, as in (\ref{xf1}), into the last two equations (\ref{ricsoleq}), and then in using the integrability conditions $(\nabla_a\nabla_b-\nabla_b\nabla_a)F=0$ by applying the covariant derivatives on $\nabla_aF$. Since the derivatives $\nabla_aF$, after an insertion of $X$ in them, are only 
expressible in terms of the metric invariants and $F$, this leads either to a formula for $F$, or to an obstruction independent of $F$, expressed in terms of the metric invariants only. 

Explicitly, if we write $X_c=A_c+B_c F$, where
\begin{align*}
A_c=&-\frac{1}{\nu}\left(\epsilon^{ab}\nd_a \De K\nd_b K\right)\epsilon_{cd}\nd^dK+\frac{1}{\nu}\big(\De K+2 K^2-2 K \la\big)\epsilon_{cd}N^d,\\
B_c=&\frac{1}{2 \nu}M \epsilon_{cd}\nd^d K, 
\end{align*}
then the last 2 equations of (\ref{ricsoleq}) give
\begin{align*}
\nd_aF=C_a+FE_a,  
\end{align*}
where
\begin{align*}
C_a=&2 \epsilon_{ab}\nd^b K-2\epsilon_{ab}A^b K,\\
E_a=&-2 \epsilon_{ab}B^b K. 
\end{align*}
The integrability condition $(\nabla_a\nabla_b-\nabla_b\nabla_a)F=0$ then implies that
\begin{equation}\label{linearf}
0=\epsilon^{ab}\nd_aC_b+\epsilon^{ab}C_aE_b+(\epsilon^{ab}\nd_aE_b) F. 
\end{equation}
If $\epsilon^{ab}\nd_aE_b=0$, then $\epsilon^{ab}\nd_aC_b+\epsilon^{ab}C_aE_b$
is an obstruction. Otherwise, we can solve for $F$ in (\ref{linearf}) and plug this formula back into (\ref{xf1}) to determine $X$ completely in terms of metric invariants. 
We explicitly show this alternative procedure in the following example, where we have $\epsilon^{ab}\nd_aE_b=0$. The obstruction $\epsilon^{ab}\nd_aC_b+\epsilon^{ab}C_aE_b$ is then a $5^{\rm th}$ order ODE on a single function $f(x)$. 
\end{remark}
\begin{example}\label{soi}

Consider the metric
$$g={\rm e}^{2f}(\der x^2 +\der y^2),$$
with a real function $f$ of variable $x$ only, $f=f(x)$. 
Our aim in this example is to find obstructions for $g$ to admit a Ricci soliton. 

We easily calculate that:
$$ \rho\equiv 0,\quad\quad\quad \nu=-{\rm e}^{-10f}f'(2f'f''-f^{(3)})^3.$$
Thus, assuming that $f'(2f'f''-f^{(3)})\neq 0$, because $\rho\equiv 0$, we have to calculate $X_c$ using formula (\ref{xf1}). This, according to Theorem \ref{ricsolthm}, is the necessary form of $X_c$, for it to be a Ricci soliton $(g,X)$. Explicitly, (\ref{xf1}) gives:
\begin{equation} X=\frac{f^{(4)}-4f'f^{(3)}-4{f''}^2+4{f'}^2f''-2{\rm e}^{2f}\lambda f''}{f^{(3)}-2f'f''}~{\rm e}^{-2f}~\partial_x~ +~\frac{F}{2f'}~\partial_y.\label{roz}\end{equation}
Here $F$ is the unknown function $F=F(x,y)$ responsible for the skew symmetric part of $\nabla_a X_b$. We now insert this $X$ to the last two of the closed system equations (\ref{ricsoleq}) and solve for the derivatives $F_x$ and $F_y$. We get:
$$F_x=\frac{f''}{f'}F,\quad\quad F_y=\frac{2{\rm e}^{-2f}(2{\rm e}^{2f}\lambda {f''}^2+4{f''}^3+{f^{(3)}}^2-f''f^{(4)})}{f^{(3)}-2f'f''}.$$
We now have to assure that $F_{xy}=F_{yx}$, which requires that $f=f(x)$ satisfies a certain $5^{\rm th}$ order ODE. This is precisely the obstruction obtained in (\ref{linearf}). From this we have the $5^{\rm th}$ derivative $f^{(5)}$. Now, we insert (\ref{roz}) in the first four closed system equations (\ref{ricsoleq}). By using the computed $F_x$, $F_y$, and their integrability condition, which gives us the $5^{\rm th}$ derivative of $f$, we see that these four equations reduce to a single one, which is 
\begin{equation}
f^{(4)}(f''-{f'}^2)-4{f'}^4f''-2{\rm e}^{2f}\lambda {f''}^2+2{f'}^2{f''}^2-4{f''}^3+({\rm e}^{2f}\lambda f'+4{f'}^3+f'f'')f^{(3)}-{f^{(3)}}^2=0.\label{czw}\end{equation} 
This gives a lower order obstruction for $g$ to admit any Ricci soliton. 

We now solve for $f^{(4)}$ from this equation, and recalculate our $X$, $F_x$ and $F_y$, obtaining:
$$ X=\frac{f^{(3)}-3f'f''-{\rm e}^{2f}\lambda f'}{f''-{f'}^2}~{\rm e}^{-2f}~\partial_x~ +~\frac{F}{2f'}~\partial_y$$
and 
$$F_x=\frac{f''}{f'}F,\quad\quad F_y=\frac{2{\rm e}^{-2f}f'({\rm e}^{2f}\lambda {f''}+2{f'}^2f''+{f''}^2-f'f^{(3)})}{f''-{f'}^2}.$$
Having this we observe, that now $F_{xy}=F_{yx}$ is equivalent to the equation (\ref{czw}). Hence we can forget about the $5^{\rm th}$ order ODE for $f$, we have used previously. We have just proved that this is implied by (\ref{czw}). 

So the conclusion, up to now, is that (\ref{czw}) is the only condition needed for $g$ to admit any Ricci soliton. 

With this equation satisfied, we can solve for $F$. Integration of $F_x$ gives:
$$F=hf',$$
with the function $h$ depending on variable $y$ only, $h=h(y)$. Now, insertion of this into the formula for $F_y$ gives the following equation:
$$h'=\frac{2{\rm e}^{-2f}({\rm e}^{2f}\lambda {f''}+2{f'}^2f''+{f''}^2-f'f^{(3)})}{f''-{f'}^2}.$$
Since the left hand side of this equation depends only on $y$, and the right hand side only on $x$, then both sides must be equal to a real constant, say $2a$. 
Then we have $h=2ay +2b$, with a real constant $b$; the function $f$ must staisfy the third order ODE:
\begin{equation}f^{(3)}f'-{f''}^2+(a{\rm e}^{2f}-2{f'}^2-{\rm e}^{2f}\lambda)f''-a{\rm e}^{2f}{f'}^2=0.\label{trz}\end{equation}
It follows that, if $f$ satisfies this equation, then it automatically satisfies equation (\ref{czw}). In other words, this equation is the first integral for (\ref{czw}). 
\end{example}

In such a way, we solved for $X$, $F$, and the only equation to be satisfied for $g$ to have a Ricci soliton, is just the third order ODE (\ref{trz}). We summarise the consideration in this example in the following proposition.

\begin{proposition}
The metric
$$\boxed{g={\rm e}^{2f}(\der x^2 +\der y^2),}$$
admits a Ricci soliton if and only if the function $f=f(x)$ satisfies a third order ODE:
$$\boxed{f^{(3)}f'-{f''}^2+(a{\rm e}^{2f}-2{f'}^2-{\rm e}^{2f}\lambda)f''-a{\rm e}^{2f}{f'}^2=0.}$$
If this equation is satisfied the soliton is given by a vector field 
$$\boxed{X~=~\frac{{\rm e}^{-2f}f''+\lambda-a }{f'}~\partial_x+(ay+b)~\partial_y.}$$
Here $a$ and $b$ are real constants. The soliton is a gradient Ricci soliton if and only if $a=b=0$. 

\end{proposition}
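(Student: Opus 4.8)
The plan is to present this proposition as the clean packaging of the analysis carried out in Example \ref{soi}. I work on the open dense region where the genericity assumptions used there hold (in particular $f'(2f'f''-f^{(3)})\neq 0$, and, after the reduction, $f''\neq(f')^2$); on the complementary loci $g$ has constant curvature and is already excluded in this subsection.

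First I would assemble the geometry of $g={\rm e}^{2f}(\der x^2+\der y^2)$ with $f=f(x)$. Here $e=1$, the Gauss curvature is $K=-{\rm e}^{-2f}f''$, and both $K$ and $M=g^{ab}\nd_aK\nd_bK$ depend on $x$ alone; hence their gradients are proportional to $\der x$ and $\rho=\epsilon^{ab}\nd_aK\nd_bM\equiv 0$. A direct computation of $N_c=(\nd_a\nd_cK)\epsilon^{ab}\nd_bK$ gives $\nu=\epsilon^{ab}\nd_aKN_b=-{\rm e}^{-10f}f'(2f'f''-f^{(3)})^3$, which is nonzero on our region. We are therefore in the case $\rho=0$, $\nu\neq 0$ of Theorem \ref{ricsolthm}, so any Ricci soliton $(g,X)$ must have $X$ of the form (\ref{xf1}); evaluated here, this is (\ref{roz}), still carrying the undetermined skew part $F=F(x,y)$.

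Next I would run the elimination procedure of Remark \ref{fremark}. Substituting (\ref{roz}) into the last two equations of (\ref{ricsoleq}) gives the closed formulas for $F_x$ and $F_y$ recorded in Example \ref{soi}; imposing $F_{xy}=F_{yx}$ and feeding the result, with (\ref{roz}), into the remaining four equations of (\ref{ricsoleq}) collapses all of them to the single fourth-order ODE (\ref{czw}). Solving (\ref{czw}) for $f^{(4)}$ and recomputing $X$, $F_x$, $F_y$ makes $F_{xy}=F_{yx}$ equivalent to (\ref{czw}) itself, so (\ref{czw}) is the only obstruction surviving at this stage. Now integrate $F_x=(f''/f')F$ to get $F=h(y)f'$; substituting this into the formula for $F_y$ separates the variables, forcing $h'$ to be a real constant $2a$, so $h=2ay+2b$, and the resulting equation on $f$ is precisely the third-order ODE (\ref{trz}). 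A short calculation shows (\ref{trz}) implies (\ref{czw}) — it is a first integral of it — so the existence of a Ricci soliton is equivalent to $f$ satisfying (\ref{trz}) for some $a\in\R$.

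For the converse I would take $f$ satisfying (\ref{trz}), define $X$ by the boxed formula (whose $\partial_x$-component is obtained from (\ref{roz}) by using (\ref{trz}) to eliminate the higher derivatives of $f$) together with $F=(2ay+2b)f'$, and verify directly that the symmetric part of $\nd_aX_b$ equals $(\la-K)g_{ab}$; that is, (\ref{grs0}) holds with $c_1=0$, $c_2=-1$. Finally, $(g,X)$ is a gradient soliton iff $\nd_{[a}X_{b]}=0$, i.e. iff $(2ay+2b)f'\equiv 0$; since $f'\not\equiv 0$ on our region this holds precisely when $a=b=0$. The only real obstacle here is computational: confirming the expression for $\nu$, producing the explicit $F_x$, $F_y$, and especially checking that the four equations of (\ref{ricsoleq}) reduce \emph{exactly} to (\ref{czw}) and that (\ref{trz}) implies (\ref{czw}) — these are the "tough and technically demanding" manipulations flagged after (\ref{gscons}). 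With $f=f(x)$ each is a finite computation in $x$ and $y$, and the only care needed is in tracking the successive reductions.
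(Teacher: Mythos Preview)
Your proposal is correct and follows essentially the same approach as the paper: the proposition is precisely the packaging of the computations carried out step by step in Example~\ref{soi}, and you have reproduced the same chain of reductions (from $\rho=0$, $\nu\neq 0$ in Theorem~\ref{ricsolthm}, through the elimination procedure of Remark~\ref{fremark}, down to the separation $F=h(y)f'$ and the first integral~(\ref{trz}) of~(\ref{czw})). Your identification of the degeneracy loci with constant-curvature metrics, and of the gradient condition with $a=b=0$ via $F=(2ay+2b)f'$, also matches the paper's treatment.
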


\subsection{2D homotheties}{\bf The case where $c_1=0$, $c_2=0$.}\label{homotheties}

Again for the convenience of the reader we present the closed system (\ref{gssys}) for the homothety equations and its integrability conditions (\ref{gscons}) by setting $c_1=c_2=0$:
\begin{equation}\label{homeq}
\boxed{\begin{aligned}\nd_{a}X_{b}=&\frac{e}{2}F \epsilon_{ab}+ \la g_{ab}\\
\nd_aF=&2 \epsilon^b{}_aX_b K\end{aligned}}
\end{equation}
\begin{equation}\label{homconst}\boxed{\begin{aligned}
X^a\nd_aK+2 \la K=0.
\end{aligned}}\end{equation}    

Our characterisation of homotheties in 2 dimensions is given by the following theorem:
\begin{theorem}\label{homothetythm}
A 1-form $X$ defining a 2-dimensional homothety pair $(g,X)$ is of the form
\begin{align*}
X_a=\frac{1}{\rho}\left(6 M \la \epsilon_{ab}\nd^b K-2 K\la\epsilon_{ab}\nd^b M\right),
\end{align*}
provided that $\rho \neq 0$.

If $\rho=0$, then $X$ is of the form
\begin{align*}
X_c=\frac{ MF}{2 \nu}\epsilon_{cd}\nd^dK-\frac{2 K \la}{\nu}\epsilon_{cd}N^d,
\end{align*}
(where again $N_c=(\nd_a\nd_cK)\epsilon^{ab}\nd_bK$ and $\nu=\epsilon^{ab}\nd_aKN_b$), provided that $\nu \neq 0$.

If $\nu=0$, in the Riemannian case, $e=1$, the vector $X$ is locally a gradient and we return to the situation of Proposition \ref{gradientprop} as before. 
\end{theorem}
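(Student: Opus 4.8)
The statement for 2D homotheties is structurally parallel to Theorem \ref{ricsolthm}, so the plan is to follow exactly the same prolongation-and-projection strategy, specialised to $c_1=c_2=0$. The starting point is the pair of closed-system equations \eqref{homeq} together with the integrability condition \eqref{homconst}, namely $X^a\nd_aK+2\la K=0$. First I would differentiate \eqref{homconst} covariantly to obtain
\begin{align*}
0=(\nd_aX^b)\nd_bK+X^b\nd_a\nd_bK+2\la\nd_aK,
\end{align*}
and then substitute $\nd_aX^b=\tfrac{e}{2}\epsilon_a{}^bF+\de_a{}^b\la$ from the first equation of \eqref{homeq}. Contracting the resulting identity once with $\nd^aK$ and once with $\epsilon^{ab}\nd_bK$ (exactly as in the proof of Theorem \ref{ricsolthm}) yields two scalar relations: one of the form $X^a\nd_aM=(\text{metric invariants})$ after setting $M=\nd_aK\nd^aK$ and using $\nd_aM=2(\nd_bK)\nd_a\nd^bK$, and one of the form $X^cN_c=(\text{metric invariants involving }F)$ with $N_c=(\nd_a\nd_cK)\epsilon^{ab}\nd_bK$. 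I expect the $F$-term in the $\nd^aK$-contracted identity to drop out because $\epsilon_a{}^b\nd^aK\cdot\nd_bK=0$, leaving $X^a\nd_aM$ purely in terms of $K$, just as the analogous step gave \eqref{ricconst2}.

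With the two projections $X^a\nd_aK=-2\la K$ and $X^a\nd_aM=(\text{something})$ in hand, the case $\rho=\epsilon^{ab}\nd_aK\nd_bM\neq0$ is immediate: $\{\epsilon^a{}_b\nd^bK,\ \epsilon^a{}_b\nd^bM\}$ is a basis, and solving the $2\times2$ linear system for the components of $X_a$ in that basis gives the first displayed formula. I would double-check that the coefficients work out to $6M\la$ and $-2K\la$ as stated — this amounts to verifying that the right-hand side of the $X^a\nd_aM$ relation equals $-6\la M$ (the $c_2=0$ specialisation kills the $\nd^aK\nd_a\De K$ term that survives in \eqref{ricconst2}, and the curvature terms $2K^2,\ -2K\la$ also vanish, leaving only the piece proportional to $(3\la-5K)M$ evaluated... actually here one must be careful: setting $c_1=c_2=0$ means there is no $\De K$ at all in \eqref{homconst}, so the differentiation produces a cleaner identity and the coefficient bookkeeping is genuinely different from the Ricci-soliton case; this is where I would be most careful).

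For the degenerate branch $\rho=0$ but $\nu=\epsilon^{ab}\nd_aKN_b\neq0$, I would instead use the basis $\{\epsilon_{cd}\nd^dK,\ \epsilon_{cd}N^d\}$ and the two projections $X^c\nd_cK=-2\la K$ and $X^cN_c=(\text{invariants}+\tfrac12MF\text{-type term})$, obtaining the second displayed formula; the quantity $F$ may still appear (as in Remark \ref{fremark}) but is algebraically determined by the remaining closed-system equations, and for the purposes of the theorem statement it suffices to exhibit $X$ in this form. Finally, for $\rho=\nu=0$ in the Riemannian case $e=1$: since the constant-curvature case is excluded, $M=\nd_aK\nd^aK>0$, so $\nd_aM=\ell\nd_aK$ for a function $\ell$ and $N_c=j\nd_cK$ for a function $j$; then $jM=\tfrac12\epsilon^{ab}\nd_aM\nd_bK=0$ forces $j=0$, hence $N_a=0$; feeding this into the $X^cN_c$ relation (the $\nu=0$ analogue of \eqref{pav2}) forces $F=0$ by $M>0$, so $X_a$ is closed and we are in the gradient situation of Proposition \ref{gradientprop}. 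This last chain of implications is essentially verbatim the end of the proof of Theorem \ref{ricsolthm}.

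\textbf{Main obstacle.} The only genuinely non-routine point is the coefficient bookkeeping in the second projection: one must differentiate $X^a\nd_aK=-2\la K$ correctly, substitute $\nd_aX_b$ from \eqref{homeq}, and confirm that the contraction with $\nd^aK$ produces precisely $X^a\nd_aM=-6\la M$ (and not a formula still containing $\De K$ or curvature-squared terms), since \eqref{homconst} — unlike \eqref{ricsolconst} — has no second-derivative term to begin with. Once that identity is pinned down, the three cases follow mechanically from linear algebra in the two chosen bases, exactly as in Theorem \ref{ricsolthm}.
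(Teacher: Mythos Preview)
Your proposal is correct and follows essentially the same route as the paper's proof: differentiate \eqref{homconst}, substitute $\nd_aX^b=\tfrac{e}{2}\epsilon_a{}^bF+\la\de_a{}^b$ from \eqref{homeq}, contract once with $\nd^aK$ and once with $\epsilon^{ab}\nd_bK$ to obtain the two projections, and then solve the $2\times2$ linear systems in the bases $\{\epsilon_{ab}\nd^bK,\epsilon_{ab}\nd^bM\}$ (when $\rho\neq0$) and $\{\epsilon_{cd}\nd^dK,\epsilon_{cd}N^d\}$ (when $\rho=0$, $\nu\neq0$); the $\rho=\nu=0$ Riemannian endgame is indeed verbatim the end of Theorem~\ref{ricsolthm}. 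Your only hesitation---the coefficient bookkeeping---resolves cleanly: with $c_2=0$ the substitution contributes $\la\nd_aK$ (no $-K\nd_aK$ term), so after contracting with $\nd^aK$ one gets $0=3\la M+\tfrac12 X^a\nd_aM$, i.e.\ $X^a\nd_aM=-6\la M$ exactly as stated, and the $\epsilon^{ab}\nd_bK$ contraction gives $X^bN_b=-\tfrac12 MF$ with no extra $\epsilon^{ab}\nd_a\De K\,\nd_bK$ term.
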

\begin{proof}
In the case where $c_1=c_2=0$ we obtain from (\ref{gscons}) that
\begin{align}\label{hom1}
 X^a\nd_aK=&-2 K\la.
\end{align}
Differentiating this equation gives
\begin{align}\label{pawhom}
0=&2 (\nd_a K) \la +(\nd_a X^b)\nd_bK+X^b \nd_a \nd_b K\nonumber\\
=&2 (\nd_a K) \la +(\frac{e}{2}\epsilon_a{}^bF+\de_a{}^b \la)\nd_bK+X^b \nd_a \nd_b K,
\end{align}
and contracting by $\nd^a K$ results in
\begin{align*}
0=&3\la M +X^b \nd^a K \nd_a \nd_b K\\
=&3\la M+\frac{X^a\nd_a M}{2}.
\end{align*}
Thus, the second constraint is given by
\begin{align}\label{hom2}
X_a \nd^a M=&-6 M \la.
\end{align}
Now assuming $\rho:=\epsilon^{ab}\nd_a K \nd_b M \neq 0$, the vectors $\epsilon^a{}_{b}\nd^b K$ and $\epsilon^a{}_{b}\nd^b M$ form a basis. The equations
(\ref{hom1}) and (\ref{hom2}) give the components of $X_a$ in this basis. Therefore we get
\begin{align*}
X_a=\frac{1}{\rho}\left(6 M \la \epsilon_{ab}\nd^b K-2 K\la\epsilon_{ab}\nd^b M\right).
\end{align*}
Thus $X_a$ is again completely determined by invariants of the metric. And now, again as in the Ricci soliton case, plugging this formula for $X_a$ back into the equations for homotheties ((\ref{grs0}) with $c_1=0$, $c_2=0$), gives us local obstructions for the metric to admit a solution to the homothety equation.

Alternatively, we can contract (\ref{pawhom}) by $\epsilon^{ab}\nd_bK$ like before and get
\[
X^bN_b=-\frac{1}{2}MF. 
\]
Hence if $\rho=0$ and $\nu$ is non-zero we can still solve for 
\begin{align*}
X_c=-\frac{1}{\nu}\left(-\frac{1}{2}MF\right)\epsilon_{cd}\nd^dK-\frac{2 K \la}{\nu}\epsilon_{cd}N^d.
\end{align*}
In the Riemannian setting, if $\nu=0$, then $N_a=0$ as before in the proof of Theorem \ref{ricsolthm}, so that $F=0$ and $X$ is again a gradient.  
\end{proof}

\begin{corollary}
The local obstruction for a 2-dimensional (pseudo)-Riemannian metric g with $\rho \neq 0$ to admit a homothety $(g, X)$ is the obstruction tensor $\Theta^{(2)}_{ab}$ given by
\[
\Theta^{(2)}_{ab}=\nd_{(a} X^{(2)}_{b)}-\la g_{ab},
\] 
where 
\begin{align*}
X^{(2)}_a=\frac{1}{\rho}\left(6 M \la \epsilon_{ab}\nd^b K-2 K\la\epsilon_{ab}\nd^b M\right).
\end{align*}
The obstruction tensor $\Theta^{(2)}_{ab}$ vanishes if and only if
\[
(g,X)=(g,X^{(2)})
\]
with $\rho\neq 0$ is a homothety. 
\end{corollary}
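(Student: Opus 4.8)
The plan is to observe that this corollary is the natural repackaging of Theorem~\ref{homothetythm} into the language of an obstruction tensor, so the proof is short and rests entirely on the uniqueness already established there. First I would recall that the homothety equation is the $c_1=c_2=0$ specialisation of (\ref{grs0}), namely $\nd_{(a}X_{b)}=\la g_{ab}$; note that the skew part $F_{ab}$ drops out of this equation, so it is genuinely a condition on the symmetric derivative alone. By construction $\Theta^{(2)}_{ab}$ is exactly the difference of the two sides of this equation evaluated on the candidate field $X^{(2)}$, so the biconditional ``$\Theta^{(2)}_{ab}$ vanishes $\iff$ $(g,X^{(2)})$ is a homothety'' is essentially tautological once the equation is written out.

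More precisely, for the reverse direction, if $(g,X^{(2)})$ is a homothety then by definition $X^{(2)}$ satisfies $\nd_{(a}X^{(2)}_{b)}=\la g_{ab}$, which is precisely the statement $\Theta^{(2)}_{ab}=0$. For the forward direction, if $\Theta^{(2)}_{ab}=0$ then unwinding the definition gives $\nd_{(a}X^{(2)}_{b)}=\la g_{ab}$, so $X^{(2)}$ solves the homothety equation and hence $(g,X^{(2)})$ is a homothety. Neither implication requires any computation beyond reading off the definition, because the skew-symmetric part plays no role in the defining equation.

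The genuine content --- the reason $\Theta^{(2)}$ deserves to be called \emph{the} obstruction --- comes from Theorem~\ref{homothetythm}: whenever $\rho\neq 0$, any homothety field $X$ on $(M,g)$ is forced to equal $X^{(2)}$, because the two scalar integrability conditions (\ref{hom1}) and (\ref{hom2}) determine the components of $X$ uniquely in the basis $\{\epsilon^a{}_b\nd^b K,\,\epsilon^a{}_b\nd^b M\}$ (which is a genuine basis precisely because $\rho\neq0$). Consequently $(M,g)$ admits a homothety at all if and only if this unique candidate $X^{(2)}$ is itself one, i.e. if and only if $\Theta^{(2)}_{ab}=0$. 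The only routine point to check is that the formula for $X^{(2)}$ quoted in the corollary agrees verbatim with the one produced in the theorem, which it does.

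I expect there to be no real obstacle here: the hard work --- deriving the explicit closed-form expression for $X^{(2)}$ and proving its uniqueness from the prolonged system --- is already carried out in Theorem~\ref{homothetythm}, and the corollary merely restates that result as a tensorial vanishing condition. The only care needed is notational consistency in the definition of $\Theta^{(2)}_{ab}$ and in confirming that $\nd_{(a}X_{b)}=\la g_{ab}$ is the correct $c_1=c_2=0$ reduction of the generalised Ricci soliton equation.
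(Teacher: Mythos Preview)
Your proposal is correct and matches the paper's approach: the corollary is stated without proof in the paper precisely because it is the immediate repackaging of Theorem~\ref{homothetythm} you describe, with the biconditional tautological and the ``obstruction'' status inherited from the uniqueness of $X$ established there when $\rho\neq0$.
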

\begin{remark} 
Again we observe that in the case $\rho=0$ and $\nu \neq0$, the formula for $X_a$ obtained in the proof of Theorem \ref{homothetythm} given by
\begin{equation}
X_c=\frac{MF}{2 \nu}\epsilon_{cd}\nd^dK-\frac{2 K \la}{\nu}\epsilon_{cd}N^d,\label{xc}
\end{equation}
still involves the unknown quantity $F$. To solve for $F$, we can proceed as in the Remark \ref{fremark}, and in the end obtain local if-and-only-if obstructions for $g$ with $\rho=0$, $\nu \neq 0$ to admit a homothety. Let us show this with an example. 
\end{remark}

\begin{example}

We again consider the metric
\begin{equation}
g={\rm e}^{2f}(\der x^2 +\der y^2)\label{mete},\end{equation}
with $f=f(x)$, as in Example \ref{soi}. 

We now find $f$s for which $g$ has homotheties. As before we have: $\rho\equiv 0$ and $\nu=-{\rm e}^{-10f}f'(2f'f''-f^{(3)})^3$, so if $f'(2f'f''-f^{(3)})\neq 0$ the homothety necessarily has the form (\ref{xc}). 
Explicitly, (\ref{xc}) gives:
\begin{equation} X=\frac{2\lambda f''}{2f'f''-f^{(3)}}~\partial_x~ +~\frac{F}{2f'}~\partial_y.\label{roiz}\end{equation}
Here $F$ is the unknown function $F=F(x,y)$ responsible for the skew symmetric part of $\nabla_a X_b$. We now insert this $X$ to the last two of the closed system equations (\ref{homeq}) and solve for the derivatives $F_x$ and $F_y$. We get:
$$F_x=\frac{f''}{f'}F,\quad\quad F_y=\frac{4\lambda {f''}^2}{f^{(3)}-2f'f''}.$$
Requiring that $F_{xy}=F_{yx}$ we get:
$$4\lambda f''(f'f''f^{(4)}-2f'{f^{(3)}}^2+(2{f'}^2f''+{f''}^2)f^{(3)}-4f'{f''}^3)=0.$$
Now there are three cases. 

The first one, $f''\equiv 0$, corresponds to flat metrics, $K\equiv 0$, and we will not comment on it anymore.

The second case is when 
$$\lambda=0.$$
As such, this case corresponds to $X_a$ which is a Killing vector. In this case we have $F_y=0$, and by integration we get that $F=2a f'$, where $a$ is a constant. This, when inserted in (\ref{roiz}) gives $X=a \partial_y$, which is obviously a Killing vector for metric (\ref{mete}), regardless of what the function $f=f(x)$ is. 

We are left with the analysis of the third case, which requires that 
\begin{equation}(f'f''f^{(4)}-2f'{f^{(3)}}^2+(2{f'}^2f''+{f''}^2)f^{(3)}-4f'{f''}^3)=0.\label{psy}\end{equation}
In this case we insert (\ref{roiz}) in the first four closed system equations (\ref{homeq}). By using the computed $F_x$, $F_y$, and their integrability condition (\ref{psy}), which gives us the $4^{\rm th}$ derivative of $f$, we see that these four equations reduce to a single one, which when $\lambda\neq 0$, is equivalent to 
\begin{equation}
\left(\frac{f''}{{f'}^2}\right)'=0.\label{czw1}\end{equation} 
This equation can be explicitly solved. It follows that its solutions automatically solve the 4th order ODE (\ref{psy}) and as such lead to the metrics (\ref{mete}) admitting homotheties. Solving for $F$ we get the most general solution. Modulo a redefinition of coordinates this most general solution is given by the following proposition.
\end{example}

\begin{proposition}
Modulo a change of coordinates $(x,y)\mapsto(\alpha x+\beta,y)$, a nonflat metric
$$g={\rm e}^{2f}(\der x^2 +\der y^2),$$
with $f=f(x)$, admits a proper homothetic vector $X_a$ if and only if
$$\boxed{g=x^{2s}(\der x^2+\der y^2),}$$
with constant $s$ such that $s(s+1)\neq 0$. In such case homothetic vector fields are given by a 2-parameter family:
$$\boxed{X=b ( x\partial_x+y\partial_y)+a\partial_y,}$$
parametrised by constants $a$ and $b$. The parameter $b$ is related to the expansion of $X$, via
$$\nabla_{(a}X_{b)}=(s+1)b g_{ab}.$$
\end{proposition}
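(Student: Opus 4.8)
The statement repackages the computation of the Example immediately preceding this proposition, together with the (routine) converse, so the plan is: run the necessity argument of that Example carefully enough to dispose of the strata it tacitly omits, then integrate the resulting ODE and normalise, then verify sufficiency directly. \emph{Necessity.} Assume $g=e^{2f}(\der x^2+\der y^2)$, $f=f(x)$, carries a proper homothetic $X$. Since $K=-e^{-2f}f''$ and $M=\nd_aK\nd^aK$ are functions of $x$ alone, $\rho=\epsilon^{ab}\nd_aK\nd_bM\equiv 0$, so Theorem \ref{homothetythm} never gives its generic branch and we work in $\rho=0$; one computes $\nu=-e^{-10f}f'(2f'f''-f^{(3)})^3$. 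If $\nu=0$ with $f'\not\equiv 0$ then $2f'f''-f^{(3)}=0$, and Theorem \ref{homothetythm} (Riemannian case) makes $X$ a gradient, so Proposition \ref{gradientprop} with $c_1=c_2=0$ applies; its formula \eqref{ggRsal} then forces $X_a\equiv 0$ wherever $K\neq 0$, whence $\la g_{ab}=0$ and $X$ is not proper, while $K\equiv 0$ (and also $f'\equiv 0$) is the excluded flat case. Hence a proper homothety forces $\nu\neq 0$, i.e.\ $f'(2f'f''-f^{(3)})\neq 0$, which is the standing hypothesis of the Example: there Theorem \ref{homothetythm} pins $X$ to \eqref{xc}, $X=\dfrac{2\la f''}{2f'f''-f^{(3)}}\,\partial_x+\dfrac{F}{2f'}\,\partial_y$; plugging this into the last two equations of \eqref{homeq} determines $F_x=(f''/f')F$ and $F_y$ explicitly, and $F_{xy}=F_{yx}$ gives the factorised obstruction $4\la f''\big(f'f''f^{(4)}-2f'{f^{(3)}}^2+(2{f'}^2f''+{f''}^2)f^{(3)}-4f'{f''}^3\big)=0$. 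The factor $f''\equiv 0$ is flat (excluded); the factor $\la=0$ makes $\nd_{(a}X_{b)}=0$, i.e.\ $X$ Killing, not proper; so \eqref{psy} holds, and substituting $X$ back into the first four equations of \eqref{homeq} and eliminating $f^{(4)}$ via \eqref{psy} collapses them, for $\la\neq 0$, to \eqref{czw1}, namely $(f''/{f'}^2)'=0$.

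\emph{Integration and normalisation.} From $(f''/{f'}^2)'=0$ one gets $f''/{f'}^2=k$ with $k$ constant; here $k\neq 0$ since $f''\not\equiv 0$ in this branch, so $-1/f'=kx+b_1$ and $f=s\log|x-x_0|+c_0$ with $s=-1/k\neq 0$. A translation in $x$ kills $x_0$, and a rescaling of the coordinates kills $c_0$ --- this is exactly the residual freedom $(x,y)\mapsto(\alpha x+\beta,y)$ in the statement, and it is available precisely when $s\neq -1$. One is thus reduced to $g=x^{2s}(\der x^2+\der y^2)$, with $s\neq 0$ by non-flatness and $s\neq -1$ by the normalisation, i.e.\ $s(s+1)\neq 0$; and at the omitted value $s=-1$ the metric $x^{-2}(\der x^2+\der y^2)$ has constant negative curvature and the vector fields below are all Killing, so there is genuinely no proper homothety there.

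\emph{Sufficiency and the main obstacle.} For $g=x^{2s}(\der x^2+\der y^2)$ with $s(s+1)\neq 0$: the Euler field $x\partial_x+y\partial_y$ generates the dilations $(x,y)\mapsto(e^tx,e^ty)$, under which $g\mapsto e^{2(s+1)t}g$, so $\cL_{x\partial_x+y\partial_y}g=2(s+1)g$, while $\partial_y$ is Killing since $g$ is $y$-independent; hence $X=b(x\partial_x+y\partial_y)+a\partial_y$ satisfies $\nd_{(a}X_{b)}=(s+1)b\,g_{ab}$, i.e.\ it solves \eqref{grs0} with $c_1=c_2=0$ and $\la=(s+1)b$, and is a proper homothety exactly when $(s+1)b\neq 0$, i.e.\ $b\neq 0$. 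That these are all the homothetic fields follows by re-running the necessity analysis for this particular $g$ (equivalently, by the explicit integration of $F$ carried out in the Example), which produces precisely this two-parameter family. The main obstacle is the middle part of the necessity step: carrying the constrained $X$, still containing the undetermined function $F$, through the closed system \eqref{homeq}, solving for $F_x,F_y$, and squeezing a single clean ODE out of $F_{xy}=F_{yx}$ together with the remaining equations --- this is the bulk of the Example's work; the rest of the care goes into cleanly eliminating the degenerate loci $f'=0$, $2f'f''-f^{(3)}=0$, $f''=0$, $\la=0$, and into a normalisation that makes exactly the cases $s(s+1)=0$ disappear.
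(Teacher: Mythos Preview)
Your proof is correct and follows the paper's approach: it is precisely the computation of the preceding Example, supplemented by the degenerate-locus analysis and the converse verification that the Example leaves implicit. One small slip worth fixing: in the $\nu=0$ branch you invoke Theorem~\ref{homothetythm} to conclude $X$ is a gradient, but that theorem's proof (paralleling Theorem~\ref{ricsolthm}) uses $M=\nabla_aK\nabla^aK>0$, whereas $2f'f''-f^{(3)}=0$ with $f',f''\not\equiv0$ integrates to $f''=\pm e^c e^{2f}$, i.e.\ $K=\mp e^c$ is a nonzero \emph{constant} and $M\equiv0$. The conclusion you want still holds, and more cheaply: constant $K\neq0$ in \eqref{homconst} gives $2\lambda K=0$, so $\lambda=0$ and $X$ is Killing, not proper. (This also gives the cleanest reason to exclude $s=-1$: for $f=s\log|x|$ one has $2f'f''-f^{(3)}=-2s(s+1)/x^3$, so $s=-1$ lands back in the $\nu=0$ constant-curvature branch just disposed of.)
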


\subsection{2D Killing equations}{\bf The case where $c_1=0$, $c_2=0$, $\la=0$.}
The Killing equation has been studied extensively and much is known in the classical literature. 
The vanishing of $\rho=\epsilon^{ab}\nd_a K \nd_b M$ is necessary for the metric to admit an isometry and is known by geometers such as Liouville and Darboux as $I_1$. 
The quantity $\rho$ coincides with the projective invariant $\rho$ in \cite{skewricci2D} up to some non-zero multiple. It is also known as $\nu_5$ by Liouville. There is another invariant known as $I_2$, which is given by non-zero multiple of $\epsilon^{ab}\nd_aK \nd_b \De K$, whose vanishing together with $I_1$ characterise metrics that are locally surfaces of revolution. See \cite{darboux}, \cite{dunos}, \cite{krug} and \cite{cubic} for details.

\subsection{2D metric projective structures with skew-symmetric Ricci tensor}{\bf The case where $c_1=1$, $c_2=-1$, $\la=0$.}
For this section we give just the closed system and integrability conditions for metric projective Einstein-Weyl (pEW) equations. The pEW equation is introduced in \cite{thesis} and \cite{skewricci2D}. Local obstructions (at least in the Riemannian setting) have already been found in \cite{skewricci2D}. The 2 dimensional projective Einstein-Weyl equations obtained from setting $c_1=1$, $c_2=-1$, $\la=0$ in the generalised Ricci soliton equations are 
\begin{equation}\label{pEW}
\nd_{a}X_{b}+X_aX_b+K g_{ab} =F_{ab}, 
\end{equation}
with $F_{ab}=\frac{e}{2}\epsilon_{ab}F=\nd_{[a}X_{b]}$ where $F^{ab}\epsilon_{ab}=F$. (Note that $\epsilon_{ac}\epsilon^{bc}=e\de^b{}_c$.)

We prolong to get the closed system
\begin{equation}\label{pEW2}
\boxed{
\begin{aligned}
\nd_{a}X_{b}&=-X_aX_b -K g_{ab}+\frac{e}{2}F \epsilon_{ab},\\
\nd_aF&=-3X_a F-2\epsilon^b{}_a\nd_bK.
\end{aligned}}
\end{equation}
The constraint equation is given by
\begin{equation}\label{pEWcons}
\boxed{
\begin{aligned}
X^a\nd_aK=-\frac{e}{2}F^2-\frac{\De K}{3}.
\end{aligned}}
\end{equation}

\subsection{2D near-horizon geometry equations}{\bf The case where $c_1=1$, $c_2=\frac{1}{2}$, $\la=0$.}
For this section we just show the computations that lead to algebraic constraints for the near-horzion geometry equations in 2 dimensions. The near-horizon geometry equations in 2 dimensions are given by
\begin{equation}\label{gs2p}
\nd_{a}X_{b}+X_aX_b-\frac{K}{2}g_{ab} =F_{ab}, 
\end{equation}
where again $F_{ab}=\frac{e}{2}\epsilon_{ab}F=\nd_{[a}X_{b]}$ with $F^{ab}\epsilon_{ab}=F$.
We prolong to get the closed system
\begin{equation}\label{nhg2}
\boxed{
\begin{aligned}
\nd_{a}X_{b}&=-X_aX_b +\frac{K}{2} g_{ab}+\frac{e}{2}F \epsilon_{ab},\\
\nd_aF&=-3X_a F+\epsilon^b{}_a\nd_bK+3\epsilon^b{}_aX_b K.
\end{aligned}}
\end{equation}
The integrability condition is given by
\begin{equation*}
\boxed{
\begin{aligned}
X^a\nd_aK=\frac{e}{2}F^2-X_aX^a K-\left(\frac{\De K}{6}+\frac{K^2}{2}\right).
\end{aligned}}
\end{equation*}
We call $X_aX^a=\si$ and $\mu=\frac{\De K}{6}+\frac{K^2}{2}$,
so that the constraint equation is now 
\begin{equation}\label{nhgcons}
X^a\nd_aK=\frac{e}{2}F^2-\si K-\mu.
\end{equation}
Observe that as a consequence of the first equation in (\ref{nhg2}), we have
\begin{align*}
\nd_a \si=e\epsilon_{ab}X^b F+K X_a-2 X_a \si. 
\end{align*}
Differentiating (\ref{nhgcons}) one more time, and using (\ref{nhg2}), we obtain
 \begin{align*}
\left(\frac{eF}{2}\epsilon_a{}^b+\frac{K}{2}\de_a{}^b-X_aX^b\right)&\nd_b K+X^b(\nd_a\nd_bK)\\
=&-3eX_a F^2+e\epsilon^b{}_a(\nd_b K)F+3 e\epsilon^b{}_aX_b KF\\
&-e\epsilon_{ab}X^b KF-K^2X_a+2 X_a \si K-\si (\nd_aK)-\nd_a\mu.
\end{align*}
Now contract the above equation with $\nd^aK$, and define $M:=\nd_aK \nd^aK$. We get 
\begin{align}\label{nhgm}
\frac{KM}{2}-&(X^a\nd_a K)(X^b\nd_b K)+\frac{1}{2}X^b\nd_bM\nonumber\\
=&-3e(X^a\nd_aK) F^2-4e\epsilon^{ab}\nd_a K X_b KF-K^2(X^a\nd_a K)\nonumber\\
&+2 (X^a\nd_aK) \si K-\si M-\nd^aK \nd_a\mu.
\end{align}
Let us define 
\[
A:=\frac{e}{2}F^2-\si K-\mu,
\]
so that (\ref{nhgcons}) is now
\[
X^a\nd_a K=A. 
\]
Note that $A$ depends on unknowns $\si$, $F$ and metric invariants $K$, $\mu$.
Equation (\ref{nhgm}) becomes
 \begin{align*}
X^b\big(\nd_bM+&8e\epsilon^{a}{}_b(\nd_a K) KF\big)\\
=&2A^2-KM-6eA F^2-2A K^2+4 AK \si-2\si M-2\nd^aK \nd_a\mu.
\end{align*}
 Now define
 \[
 B:=2A^2-KM-6eA F^2-2A K^2+4 AK \si-2\si M-2\nd^aK \nd_a\mu.
 \]
Again $B$ depends on unknowns $\si$, $F$ and metric invariants $K$, $\mu$, $M$, $\nd^aK \nd_a\mu$,
so that
\begin{align*}
X^b\nd_bM+8e\epsilon^{ab}\nd_a K X_b KF=B.
\end{align*}
We can now express $X_a$ in the basis $\epsilon_{ab}\nd^b K$ and $\epsilon_{ab}\nd^b M$, assuming 
\[
\rho=\epsilon^{ab}\nd_aK \nd_b M \neq 0. 
\]
We obtain 
 \begin{align}\label{xa}
 X_a=-\frac{1}{\rho}(B-8e\epsilon^{cd}(\nd_c K) X_d KF)\epsilon_{ab}\nd^bK+\frac{A}{\rho}\epsilon_{ab}\nd^b M.
 \end{align}
Observe that the term $X_d$ still appears on the right hand side of the above expression, which we now eliminate. Contracting with $\epsilon^{ca}\nd_c K$, we get 
  \begin{align*}
 \epsilon^{ca}\nd_c KX_a=\frac{eM}{\rho}(B-8e\epsilon^{ab}\nd_a K X_b KF)-\frac{e A}{\rho}\nd_bK\nd^b M.
 \end{align*}
 Define $N:=\nd_aK \nd^a M$. We therefore have 
  \begin{align*}
(\rho+8KFM)\epsilon^{ab}\nd_a KX_b=e(MB-AN),
 \end{align*}
 from which we get
   \begin{align}\label{mban}
 \epsilon^{ab}\nd_a KX_b=\frac{e(MB-AN)}{\rho+8KFM}.
 \end{align}
Now assuming $\rho+8KFM \neq 0$, we can substitute (\ref{mban}) back into the expression for $X_a$ given by (\ref{xa}), but also observe that $\nd_aK$ and $\epsilon_{ab}\nd^b K$ constitute a new basis. Assuming that $M\neq 0$, we therefore obtain 
 \begin{align}\label{xsig}
X_a=\frac{AN-MB}{M(\rho+8KFM)}\epsilon_{ab}\nd^bK+\frac{A}{M}\nd_a K.
 \end{align}
We therefore obtained an expression for $X$, provided that the near-horizon geometry equation is satisfied, involving unknowns $F$, $\si$ and metric invariants $K$, $\mu$, $M$, $\nd^aK \nd_a\mu$, $N$.
Next, we try to eliminate $\si$. We find that
  \begin{align*}
 \si=X_aX^a
  =&\frac{e(AN-MB)^2}{M(\rho+8KFM)^2}+\frac{A^2}{M}.
 \end{align*}
 Let us call $J=\rho+8KFM$. Rearranging the terms, we find that
 \begin{equation}\label{nhgquartic}
 M J^2\si=e(AN-MB)^2+A^2J^2, 
 \end{equation}
 and since $A$ is linear in $\si$, $B$ is quadratic in $\si$, this means that equation (\ref{nhgquartic}) gives us a quartic polynomial equation that $\si$ has to satisfy, with coefficients of the polynomial given by expressions involving differential invariants of the metric and also the unknown $F$. Since quartic polynomials are solvable, we can solve for $\si$ (in terms of metric invariants and $F$) and plug this expression for $\si$ back into (\ref{xsig}) to get $X$ determined now only in terms of $F$ and its metric invariants. We can then substitute $X$ back into original equation and derive further algebraic constraints. 
To summarise, we have:
\begin{proposition}\label{nhgprop}
A 1-form $X$ defining a generalised Ricci soliton $(g,X)$ with $(c_1,c_2,\la)=(1,\frac{1}{2},0)$ is of the form
\begin{align*}
X_a=\frac{AN-MB}{M(\rho+8KFM)}\epsilon_{ab}\nd^bK+\frac{A}{M}\nd_a K,
\end{align*}
provided $J=\rho+8KFM \neq 0$, $M \neq 0$. Here $A$, $B$ are quantities as defined above involving the unknowns $\si=X_aX^a$, $F$ and metric invariants. 
Moreover, $\si$ must satisfy a quartic polynomial equation
\[
 M J^2\si=e(AN-MB)^2+A^2J^2 
\]
still involving the unknown $F$ and the metric invariants that appear in $M$, $J$, $A$, $B$, $N$. 
\end{proposition}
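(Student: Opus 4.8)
The plan is to assemble into a single clean argument the chain of identities already derived in this subsection. I start from the prolonged closed system \eqref{nhg2} and the first integrability condition \eqref{nhgcons}, written as $X^a\nd_aK = \tfrac{e}{2}F^2 - \si K - \mu$ with $\si := X_aX^a$ and $\mu := \tfrac{\De K}{6}+\tfrac{K^2}{2}$. Differentiating \eqref{nhgcons} once more, I substitute for $\nd_aX_b$ and $\nd_aF$ from \eqref{nhg2} and for $\nd_a\si$ from the algebraic consequence $\nd_a\si = e\,\epsilon_{ab}X^bF + KX_a - 2X_a\si$ of the first equation of \eqref{nhg2}. This yields a one-form identity; contracting it with $\nd^aK$ and collecting terms gives \eqref{nhgm}. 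Setting $A := \tfrac{e}{2}F^2 - \si K - \mu$ (so $X^a\nd_aK = A$) and introducing $B := 2A^2 - KM - 6eAF^2 - 2AK^2 + 4AK\si - 2\si M - 2\nd^aK\nd_a\mu$, where $M := \nd_aK\nd^aK$, equation \eqref{nhgm} becomes the scalar relation $X^b\nd_bM + 8e\,\epsilon^{ab}(\nd_aK)X_bKF = B$.

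Next I use the two scalar equations $X^a\nd_aK = A$ and the relation just obtained. Assuming $\rho := \epsilon^{ab}\nd_aK\nd_bM \neq 0$, the pair $\{\epsilon_{ab}\nd^bK,\ \epsilon_{ab}\nd^bM\}$ is a coframe, and solving the two equations in it gives \eqref{xa}, which still contains $X_d$ through the term $8e\,\epsilon^{cd}(\nd_cK)X_dKF$. To remove this residual occurrence I contract \eqref{xa} with $\epsilon^{ca}\nd_cK$, obtaining a single linear equation for $\epsilon^{ab}(\nd_aK)X_b$ whose coefficient is $J := \rho + 8KFM$; under $J \neq 0$ this solves to \eqref{mban}. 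Feeding \eqref{mban} back into \eqref{xa} and re-expanding in the coframe $\{\nd_aK,\ \epsilon_{ab}\nd^bK\}$, which is legitimate once $M \neq 0$, produces exactly the asserted formula \eqref{xsig} for $X_a$ in terms of $F$, $\si$ and differential invariants of $g$.

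For the quartic I compute $\si = X_aX^a$ directly from \eqref{xsig}, using $\nd^aK\,(\epsilon_{ab}\nd^bK) = 0$ and $(\epsilon_{ab}\nd^bK)(\epsilon^a{}_c\nd^cK) = eM$ (the $2$-dimensional $\epsilon$-identities recalled in Section~\ref{2DgRsT}). This gives $\si = \tfrac{e(AN-MB)^2}{MJ^2} + \tfrac{A^2}{M}$ with $N := \nd_aK\nd^aM$; clearing denominators yields \eqref{nhgquartic}, $MJ^2\si = e(AN-MB)^2 + A^2J^2$. Since $A$ is affine in $\si$ and $B$ (hence $AN-MB$) is quadratic in $\si$ while $J$ does not involve $\si$, the right-hand side is a degree-$4$ polynomial in $\si$ and the left-hand side is degree $1$, so this is genuinely a quartic for $\si$ with coefficients built from $F$ and invariants of $g$. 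The only real work beyond transcribing the above is the careful elimination of the residual $X_d$ term, the verification that the resulting equation in $\si$ does not degenerate below degree $4$, and keeping track of the non-degeneracy hypotheses ($\rho \neq 0$ at the intermediate stage, then $J \neq 0$ and $M \neq 0$) under which each step is valid.
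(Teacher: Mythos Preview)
Your proposal is correct and follows essentially the same approach as the paper: differentiate the first constraint, contract with $\nd^aK$ to obtain a second scalar relation, solve the resulting linear system for $X_a$ first in the $\{\epsilon_{ab}\nd^bK,\epsilon_{ab}\nd^bM\}$ coframe, then eliminate the residual $\epsilon^{ab}(\nd_aK)X_b$ term by contraction to pass to the $\{\nd_aK,\epsilon_{ab}\nd^bK\}$ coframe, and finally square up to get the quartic in $\si$. The only minor overreach is your promise to verify non-degeneracy of the quartic; the paper does not do this and the proposition as stated does not require it.
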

Because of the tedious and difficult nature of the computations we shall not proceed further.   
Finally, note that in the case where $J=\rho+8KFM=0$, we have $F=-\frac{\rho}{8KM}$ and also $MB=AN$. Equation (\ref{nhgquartic}) becomes an identity.

Let us now turn to gradient generalised Ricci solitons in higher dimensions.

\section{Gradient generalised Ricci solitons}{\bf The case where $F_{ab}=0$ in $n$ dimensions.}
In this section we generalise the results of Section \ref{ggrs2d} on 2-dimensional gradient generalised Ricci soliton to arbitrary dimensions. 
Such a soliton has the vector field $X_a$ that is locally a gradient. Therefore in this section all our considerations are about the case when 
$$F_{ab}\equiv 0$$ 
in the closed system (\ref{gspr}). 

\begin{proposition}\label{gradientpropn}
Let $(M,g)$ be a (pseudo)-Riemannian $n$-manifold. Let $R_{ab}$ be the Ricci tensor and $R$ be the Ricci scalar curvature for $g$. Assume that 
\[
\rho^{a}{}_b:=(1-c_1c_2)R^{a}{}_b+\left((n-1)\la c_1+c_1c_2 R\right)\de^{a}{}_b
\] 
has non-zero determinant. Then a necessary condition for $X$ to be a gradient generalised Ricci soliton $(g, X)$ is that
\begin{align}\label{grsgradx}
X_a=-\frac{c_2}{2}\tilde \rho_a{}^b\nd_bR,
\end{align}
where the symbol $\tilde \rho^a{}_b$ denotes the matrix inverse of $\rho^a{}_b$. 
The vector field $X_a$ is a gradient generalised Ricci soliton if and only if it further satisfies
\begin{align}\label{grsgradn}
\nd_{a}X_{b}=&-c_1 X_aX_b +c_2 R_{ab}+ \la g_{ab}.
\end{align} 
\end{proposition}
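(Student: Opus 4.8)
The plan is to trace through the same prolongation machinery used in the 2-dimensional case, but now in arbitrary dimension $n$ and with $F_{ab}\equiv 0$ imposed from the outset. Setting $F_{ab}=0$ in the first line of the closed system \eqref{gspr} gives
\begin{align*}
\nd_a X_b = -c_1 X_a X_b + c_2 R_{ab} + \la g_{ab},
\end{align*}
which is exactly \eqref{grsgradn}; this is the ``if and only if'' condition claimed, so the substance of the proposition is the \emph{necessary} formula \eqref{grsgradx}. To obtain it, I would feed $F_{bc}=0$ into the second line of \eqref{gspr}. The left-hand side $\nd_a F_{bc}$ vanishes, so the right-hand side must vanish identically:
\begin{align*}
0 = c_2(\nd_b R_{ca} - \nd_c R_{ba}) + R_{bc}{}^d{}_a X_d + (\text{terms linear in } X \text{ with coefficients built from } g, R),
\end{align*}
where the $X$-linear terms are $2c_1 F_{cb}X_a + c_1 X_b F_{ca} - c_1 X_c F_{ba} + \la c_1 X_b g_{ca} - \la c_1 X_c g_{ba} + c_1 c_2 X_b R_{ca} - c_1 c_2 X_c R_{ba}$, which simplify because $F\equiv 0$ kills the first three.

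The key step is then to contract this identity over a suitable pair of indices to turn it into an algebraic (rather than differential, in $X$) relation for $X_a$. Contracting $b$ with $a$ (i.e.\ applying $g^{ab}$) is the natural choice: the curvature term $R_{bc}{}^d{}_a X_d$ contracts via the Ricci identity $g^{ab}R_{bc}{}^d{}_a = -R_c{}^d$ (up to sign conventions) to produce $-R_c{}^d X_d$, the term $c_1 c_2 X_b R_{ca} g^{ab} = c_1 c_2 R_c{}^b X_b$, the term $-c_1 c_2 X_c R_{ba}g^{ab} = -c_1 c_2 R\, X_c$, the term $\la c_1 X_b g_{ca}g^{ab} = \la c_1 X_c$, the term $-\la c_1 X_c g_{ba}g^{ab} = -n\la c_1 X_c$, and the inhomogeneous piece $c_2 g^{ab}(\nd_b R_{ca} - \nd_c R_{ba})$. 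Using the contracted second Bianchi identity $\nd^a R_{ca} = \tfrac12 \nd_c R$, this inhomogeneous piece becomes $c_2(\tfrac12 \nd_c R - \nd_c R) = -\tfrac{c_2}{2}\nd_c R$. Collecting the coefficients of $X$ should produce precisely $\big((1-c_1c_2)R_c{}^b + (c_1 c_2 R - (n-1)\la c_1)\de_c{}^b\big)X_b = \tfrac{c_2}{2}\nd_c R$, i.e.\ $\rho_c{}^b X_b = \tfrac{c_2}{2}\nd_c R$ up to an overall sign that I would reconcile with the stated sign in \eqref{grsgradx}; inverting under the standing hypothesis $\det \rho \neq 0$ gives \eqref{grsgradx}.

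The main obstacle I anticipate is purely bookkeeping: getting every sign and contraction convention right, in particular the sign of $g^{ab}R_{bc}{}^d{}_a$ relative to $R_c{}^d$ in the paper's curvature conventions, and verifying that the various $\la c_1$ and $c_1 c_2$ contributions assemble exactly into the matrix $\rho^a{}_b = (1-c_1c_2)R^a{}_b + ((n-1)\la c_1 + c_1 c_2 R)\de^a{}_b$ rather than some cousin of it. A secondary point worth a remark is that the contracted identity a priori only uses \emph{one} trace of the $\nd_a F_{bc}$ equation, so \eqref{grsgradx} is genuinely just a necessary condition; the converse direction requires no work beyond observing that \eqref{grsgradn} with $F=0$ is the first half of the closed system and that, conversely, any $X$ satisfying \eqref{grsgradn} has $\nd_{[a}X_{b]} = 0$ and hence is locally a gradient solving \eqref{grs0}. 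I would close by noting that this recovers Proposition \ref{gradientprop} when $n=2$, since there $R_{ab} = K g_{ab}$, $R = 2K$, and $\rho^a{}_b = \big(2(1-c_1c_2)K/2 + \cdots\big)$ reduces to the scalar $(1+c_1c_2)K + \la c_1$ times $\de^a{}_b$ (after accounting for $R=2K$), matching \eqref{ggRsal}.
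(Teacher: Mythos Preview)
Your proposal is correct and follows essentially the same route as the paper: set $F_{ab}=0$ in the second line of the closed system \eqref{gspr}, take a single trace, and use the contracted Bianchi identity $\nd^aR_{ab}=\tfrac12\nd_bR$ to obtain $\rho^a{}_bX^b=-\tfrac{c_2}{2}\nd^aR$, then invert. The only cosmetic difference is that the paper contracts the $c$ and $a$ indices whereas you contract $b$ and $a$; since the identity is antisymmetric in $b,c$ these differ only by an overall sign, which is exactly the sign you flagged as needing reconciliation (and which indeed resolves to give \eqref{grsgradx}).
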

As a Corollary to Proposition \ref{gradientpropn}, we have:
\begin{corollary}
The local obstructions for a $n$-dimensional (pseudo)-Riemannian metric $g$ with $\rho^a{}_b$ invertible to admit a gradient generalised Ricci soliton $(g,X)$ is the obstruction tensor $\Theta^{(3)}_{ab}$ given by
\[
\Theta^{(3)}_{ab}=\nd_aX^{(3)}_b+c_1X^{(3)}_aX^{(3)}_b-c_2R_{ab}-\la g_{ab}
\]
where
\[
X^{(3)}_a=-\frac{c_2}{2}\tilde \rho_a{}^b\nd_bR.
\]
The obstruction tensor $\Theta^{(3)}_{ab}$ vanishes if and only if $(g,X)=(g,X^{(3)})$ is a gradient generalised Ricci soliton.
\end{corollary}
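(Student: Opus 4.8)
The plan is to take the trace of the gradient generalised Ricci soliton equation and then invert the resulting endomorphism. Setting $F_{ab}=0$ in the closed system \nn{gspr}, the first equation becomes exactly \nn{grsgradn}, so it remains to extract the stated formula \nn{grsgradx} for $X_a$ from the \emph{second} equation of \nn{gspr}. With $F_{ab}\equiv 0$ the second prolongation equation reduces to
\begin{align*}
0=c_2(\nd_bR_{ca}-\nd_cR_{ba})+R_{bc}{}^d{}_aX_d+\la c_1(X_bg_{ca}-X_cg_{ba})+c_1c_2(X_bR_{ca}-X_cR_{ba}).
\end{align*}
First I would contract this with $g^{bc}$ (equivalently trace over the pair $b,c$). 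The curvature term $R_{bc}{}^d{}_aX_d$ contracts, via the once-contracted Bianchi identity $\nd^bR_{ba}=\tfrac12\nd_aR$, to a multiple of the Ricci endomorphism acting on $X$; the Schouten-type derivative term $c_2(\nd_bR_{ca}-\nd_cR_{ba})$ contracts to $c_2(\nd^cR_{ca}-\nd_aR)=-\tfrac{c_2}{2}\nd_aR$; and the two algebraic pairs contract to $(n-1)$-fold multiples of $\la c_1 X_a$ and $c_1c_2 X_bR^b{}_a$ respectively, together with a $c_1c_2RX_a$ term from the trace of $R_{ca}$. Collecting coefficients of $X$ one finds precisely the endomorphism $\rho^a{}_b=(1-c_1c_2)R^a{}_b+((n-1)\la c_1+c_1c_2R)\de^a{}_b$ hitting $X$, set equal to a constant times $\nd R$, which gives \nn{grsgradx} after multiplying by $\tilde\rho$.

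Next I would observe that the hypothesis $\det\rho^a{}_b\neq 0$ is exactly what is needed to invert this relation, so $X_a$ is forced to be the stated expression: this establishes necessity. For the converse, if $(g,X)$ with $X$ given by \nn{grsgradx} additionally satisfies \nn{grsgradn}, then $X$ is closed (it is built from $\nd R$ contracted against a symmetric-in-the-appropriate-sense tensor, and more directly $\nd_{[a}X_{b]}=F_{ab}=0$ by \nn{grsgradn}), hence locally a gradient, and \nn{grsgradn} with $F_{ab}=0$ is literally the generalised Ricci soliton equation \nn{grs0}; so $(g,X)$ is a gradient generalised Ricci soliton, giving sufficiency. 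The corollary is then immediate: substituting $X^{(3)}$ into \nn{grsgradn} produces a tensor $\Theta^{(3)}_{ab}$ that vanishes precisely when \nn{grsgradn} holds, which by the proposition is the if-and-only-if condition.

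The main obstacle is purely computational: correctly tracing the second prolongation equation and keeping track of the signs and the $(n-1)$ factors coming from $g^{bc}g_{ca}=\de^b{}_a$ and $g^{bc}g_{ba}$, and from the contraction of the Riemann tensor term. One must be careful that $R_{bc}{}^d{}_aX_d$ contracted on $b,c$ involves the antisymmetry of the Riemann tensor in its first two slots, so it is really $\nd^bR_{ba}-\nd^cR_{ca}$-type pieces combined with a Ricci term; using the contracted second Bianchi identity $2\nd^bR_{ba}=\nd_aR$ is the key identity that makes the derivative terms collapse to a multiple of $\nd_aR$. Once the trace is organised correctly, the identification of $\rho^a{}_b$ and the inversion step are routine.
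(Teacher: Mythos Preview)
Your overall strategy matches the paper's: set $F_{ab}=0$ in the second prolongation equation, take a trace to produce an algebraic relation $\rho^a{}_b X^b = -\tfrac{c_2}{2}\nabla^a R$, invert $\rho$, and then substitute back into \nn{grsgradn} to define the obstruction tensor. That is exactly how the paper proceeds.

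However, there is a genuine error in the execution. You say to contract with $g^{bc}$, but the second prolongation equation with $F_{ab}=0$,
\[
0=c_2(\nd_bR_{ca}-\nd_cR_{ba})+R_{bc}{}^d{}_aX_d+\la c_1(X_bg_{ca}-X_cg_{ba})+c_1c_2(X_bR_{ca}-X_cR_{ba}),
\]
is \emph{manifestly antisymmetric in $b$ and $c$} (it came from $\nabla_a F_{bc}$). Contracting with $g^{bc}$ therefore yields $0=0$ identically and carries no information. The correct trace, as the paper takes, is over the pair $c,a$ (equivalently $a,b$): then $g^{ca}g_{ca}=n$ produces the $(n-1)$ factors you mention, $g^{ca}R_{bc}{}^d{}_a X_d = R_{bd}X^d$ gives the Ricci endomorphism directly (no Bianchi identity is needed here---the Riemann term has no derivatives), and the derivative term becomes $c_2(\nabla_b R-\nabla^a R_{ab})=\tfrac{c_2}{2}\nabla_b R$ via the contracted Bianchi identity. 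In fact several of your stated intermediate results (the $(n-1)\la c_1 X$ term, the $-\tfrac{c_2}{2}\nabla R$ answer) are precisely what the $g^{ca}$ contraction produces with free index $b$, so it appears you had the right computation in mind but mislabelled which indices were being traced. Once you fix the contraction to $g^{ca}$, the rest of your argument goes through as written.
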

\begin{proof}[Proof of Proposition \ref{gradientpropn}]
If we look at the equations in the closed system (\ref{gspr})
and consider the case $F_{ab}=0$, we get from the second that 
\begin{align*}
0=&c_2(\nd_{b}R_{ca}-\nd_c R_{ba})+R_{bc}{}^d{}_aX_d\\
&+\la c_1 X_b g_{ca}- \la c_1 X_c g_{ba}+c_1c_2 X_b R_{ca}-c_1 c_2 X_c R_{ba},
\end{align*}
from which, tracing $c$ and $a$ indices, gives
\begin{align*}
0=&c_2(\nd_{b}R-\nd^c R_{ca})+R_{bd}X^d+(n-1)\la c_1 X_b+c_1c_2 X_b R-c_1 c_2 R_{bd}X^d.
\end{align*}
Rearranging, and using the contracted Bianchi identity that $\nd_aR=2\nd^bR_{ab}$, we obtain
\begin{align*}
0=&\frac{c_2}{2}\nd_{b}R+(1-c_1c_2)R_{bd}X^d+\left((n-1)\la c_1+c_1c_2 R\right) X_b,
\end{align*}
or that
\begin{align*}
-\frac{c_2}{2}\nd^{b}R=\left[(1-c_1c_2)R^b{}_{d}+\left((n-1)\la c_1+c_1c_2 R\right)\de^b{}_d\right] X^d.
\end{align*}
Hence, supposing that the tensor given by
\[
\rho^a{}_{b}:=(1-c_1c_2)R^a{}_{b}+\left((n-1)\la c_1+c_1c_2 R\right)\de^a{}_{b}
\]
has non-zero determinant, we have its inverse $\tilde \rho^a{}_{b}$ such that
\[
\tilde \rho^a{}_b \rho^{b}{}_c=\de^a{}_c.
\]
In this case, we solve for $X^a$, obtaining
\begin{align*}
X^a=-\frac{c_2}{2}\tilde \rho^a{}_b\nd^bR.
\end{align*}
This proves that $X_a$ is of the form (\ref{grsgradx}). To make this necessary condition  for $X_a$ sufficient, vector field $X^a$ must satisfy the equation (\ref{grsgradn}). This ends the proof.
\end{proof}
\begin{remark}
Although it is not evident at the first glance, 
it follows from the proof, as a consequence of $F_{ab}=0$, that the vector field $X^a$ given by formula (\ref{grsgradx}) is locally a gradient.
\end{remark}
\begin{example}[Gradient Ricci solitons: $c_1=0$, $c_2=-1$]
We see that in this case, $\rho_{ab}=R_{ab}$. For example, can the metric in $\R^3$ given by 
\[
g=e^{t^2} \der x^2+ e^{t} \der y^2+\der t^2
\]
admit a steady $(\la=0)$ gradient Ricci soliton?
We find for this metric that the Ricci tensor is given by 
$$
R_{ab}\der x^a \der x^b=\frac{e^{t^2}(2t^2+t+2)}{2} \der x^2-\frac{e^t(1+2t)}{4}\der y^2-\left(t^2+\frac{5}{4}\right)\der t^2
$$
and therefore is invertible on the open set where $(2t^2+t+2)(1+2t)$ is non-zero.
We compute, and find that $X_a=\frac{1}{2}\tilde \rho_a{}^b\nd_bR$ gives
\[
X=-\frac{2(4t+1)}{4t^2+5}\der t.
\]
Plugging this back into the steady gradient Ricci soliton equation gives
$$\begin{aligned}
\Theta^{(3)}_{ab}&\der x^a \der x^b=\frac{e^{t^2}(8t^4+4t^3+2t^2+t+10)}{2(4t^2+5)}\der x^2\\&+\frac{e^{t^2}(8t^3+4t^2-6t+1)}{4(4t^2+5)}\der y^2+\frac{64 t^6+240 t^4+428 t^2+64 t-35}{4(4t^2+5)^2}\der t^2,
\end{aligned}$$
which is non-zero. We conclude that this metric does not admit a solution to the steady gradient Ricci soliton equations even locally. 
\end{example}

\begin{example}[Gradient Ricci solitons: $c_1=0$, $c_2=-1$]\label{3dricsol}
For a positive example, consider the metric on $\R^3$ given by 
\[
g=t \der x^2+ t \der y^2+\frac{2 t^{\sqrt{2}-2}}{(a t^{\sqrt{2}}-b)^2}\der t^2.
\]
We find for this metric that the Ricci tensor is given by 
\begin{align*}
R_{ab}\der x^a \der x^b=&-\frac{a^2(2+\sqrt{2})t^{2+\sqrt{2}}-4 a b t^2-b^2(\sqrt{2}-2)t^{2-\sqrt{2}}}{8t}\der x^2\\
&-\frac{a^2(2+\sqrt{2})t^{2+\sqrt{2}}-4 a b t^2-b^2(\sqrt{2}-2)t^{2-\sqrt{2}}}{8t}\der y^2\\
&-\frac{a(\sqrt{2}+1) t^{\sqrt{2}}+b(\sqrt{2}-1)}{2(a t^{\sqrt{2}}-b)t^2}\der t^2
\end{align*}
and therefore is invertible on the open set where 
\[
(a^2(2+\sqrt{2})t^{2+\sqrt{2}}-4 a b t^2-b^2(\sqrt{2}-2)t^{2-\sqrt{2}}) \left(a(\sqrt{2}+1) t^{\sqrt{2}}+b(\sqrt{2}-1)\right)
\]
is non-zero.
We compute, and find that $X_a=\frac{1}{2}\tilde \rho_a{}^b\nd_bR$ gives
\[
X=\frac{\sqrt{2}(a^2(2\sqrt{2}+3)t^{2+2\sqrt{2}}+b^2(2\sqrt{2}-3)t^2)}{2(a t^{\sqrt{2}}-b)(a(\sqrt{2}+1)t^{\sqrt{2}}+b(\sqrt{2}-1))t^3}\der t.
\]
Plugging this back into the steady gradient Ricci soliton equation gives
$$\Theta^{(3)}_{ab}\der x^a \der x^b=0.$$
We therefore conclude that 
$$
\boxed{\begin{aligned}
g&=t \der x^2+ t \der y^2+\frac{2 t^{\sqrt{2}-2}}{(a t^{\sqrt{2}}-b)^2}\der t^2,\\
X&=\frac{\sqrt{2}(a^2(2\sqrt{2}+3)t^{2+2\sqrt{2}}+b^2(2\sqrt{2}-3)t^2)}{2(a t^{\sqrt{2}}-b)(a(\sqrt{2}+1)t^{\sqrt{2}}+b(\sqrt{2}-1))t^3}\der t,
\end{aligned}}
$$
is a 2-parameter family of steady gradient Ricci solitons. 
\end{example}

\begin{remark}
In the case where $1=c_1c_2$, we see that $\rho_{ab}$ is a multiple of the metric and so $X_a$ will be some multiple of the gradient of $R$.
\end{remark}

\section{3D Ricci solitons and homotheties}
Again because of the difficulty in considering the $c_1 \neq 0$ case, let us now consider the case for the generalised Ricci solitons with $c_1=0$ (proper Ricci solitons and homotheties) in 3 dimensions.

The prolongation gives
\begin{equation}\label{3drss}
\boxed{\begin{aligned}
\nd_{a}X_{b}=&c_2 R_{ab}+F_{ab}+ \la g_{ab}\\
\nd_aF_{bc}=&c_2(\nd_{b}R_{ca}-\nd_c R_{ba})+R_{bc}{}^d{}_aX_d
\end{aligned}}
\end{equation}
Tracing over $a$ and $b$ indices in the second equation and using the contracted Bianchi identity gives
\begin{align*}
\nd^aF_{ac}
=&-\frac{c_2}{2}\nd_c R-R_{cd}X^d. 
\end{align*}
Differentiating this equation once more and using (\ref{3drss}) yields
\begin{align*}
\nd_b\nd^aF_{ac}
=&-\frac{c_2}{2}\nd_b\nd_c R-(\nd_b R_{cd})X^d-R_{cd}(c_2 R_b{}^d+F_b{}^d+\la \de_b{}^d).
\end{align*}
Contracting upon $b$ and $c$ and using the identity $\nd^a\nd^bF_{ab}=0$, which is true for any $2$-form by the Ricci identity, we obtain
\begin{equation}\label{3drsc}
\boxed{
\begin{aligned}
X^d\nd_d R=-c_2\De R-2c_2 R_{bd}R^{bd}-2\la R
\end{aligned}}
\end{equation}
as an integrability condition to (\ref{3drss}). To get further algebraic constraints, we can differentiate (\ref{3drsc}).

Differentiating the last equation in (\ref{3drss}) gives
\begin{align*}
\nd_d\nd_aF_{bc}=c_2(\nd_d\nd_bR_{ca}-\nd_d\nd_cR_{ba})+(\nd_dR_{bc}{}^e{}_a)X_e+R_{bc}{}^e{}_a(c_2R_{de}+F_{de}+\la g_{de}), 
\end{align*}
so that skewing gives
\begin{align*}
R_{ad}{}^e{}_bF_{ec}+R_{ad}{}^e{}_cF_{be}=&c_2(\nd_d\nd_bR_{ca}-\nd_d\nd_cR_{ba})-c_2(\nd_a\nd_bR_{cd}-\nd_a\nd_cR_{bd})\\
&+(\nd_dR_{bc}{}^e{}_a-\nd_aR_{bc}{}^e{}_d)X_e+c_2 R_{bc}{}^e{}_aR_{de}+R_{bc}{}^e{}_aF_{de}+\la R_{bcda}\\
&-c_2 R_{bc}{}^e{}_dR_{ae}-R_{bc}{}^e{}_dF_{ae}-\la R_{bcad}.
\end{align*}
Using the Bianchi identities, 
\[
\nd_d R_{bc}{}^e{}_a-\nd_a R_{bc}{}^e{}_d=\nd_d R^e{}_{abc}-\nd_a R^{e}{}_{dbc}=\nd^e R_{dabc},
\]
and tracing $d$ and $b$ indices, we obtain
\begin{align*}
&X^e\nd_eR_{ac}+R^b{}_c{}^e{}_aF_{be}+R_c{}^eF_{ae}
+R^b{}_a{}^e{}_cF_{be}+R_a{}^eF_{ce}\\
=&c_2(-R_c{}^eR_{ae}-R^b{}_c{}^e{}_aR_{be})-2\la R_{ca}+c_2(\nd_a\nd^bR_{cb}-\nd_a\nd_cR)
-c_2(\De R_{ca}-\nd^b\nd_cR_{ba}).
\end{align*}
Let us denote the right hand side by $S_{ac}$: 
\[
S_{ac}=c_2(-R_c{}^eR_{ae}-R^b{}_c{}^e{}_aR_{be})-2\la R_{ca}+c_2(\nd_a\nd^bR_{cb}-\nd_a\nd_cR)
-c_2(\De R_{ca}-\nd^b\nd_cR_{ba}).
\]
Let us write 
\[
R_{ab}=\P_{ab}+\P g_{ab}.
\]
Here $\P_{ab}$ is the Schouten tensor and $\P=g^{ab}\P_{ab}$ is its metric trace.
Decomposing 
\[
R_{abcd}=g_{ac}\P_{bd}-g_{bc}\P_{ad}+g_{bd}\P_{ac}-g_{bc}\P_{ad}, 
\]
we then get
\begin{align*}
&X^e\nd_eR_{ac}+(g^{be}\P_{ca}-\de_c{}^e\P_a{}^b+\P^{be}g_{ca}-\de_a{}^b\P_c{}^e)F_{be}
+R_c{}^eF_{ae}\\
&+(g^{be}\P_{ac}-\de_a{}^e\P_c{}^b+\P^{be}g_{ca}-\de_c{}^b\P_a{}^e)F_{be}+R_a{}^eF_{ce}\\
=&X^e\nd_eR_{ac}-\P_a{}^bF_{bc}-\P_c{}^bF_{ba}\\
=&S_{ac}.
\end{align*}
Let $\epsilon_{abc}$ denote the volume form in $3$ dimensions. Again setting $e=1$ when $g$ is Riemannian $(+++)$ and $e=-1$ when $g$ is Lorentzian $(++-)$, we have
\begin{align*}
\epsilon_{abc}\epsilon_{def}
=&e\left(g_{ad}g_{be}g_{cf}+g_{af}g_{bd}g_{ce}+g_{ae}g_{bf}g_{cd}-g_{ad}g_{bf}g_{ce}-g_{ae}g_{bd}g_{cf}-g_{af}g_{be}g_{cd}\right),
\end{align*}
from which we get
\begin{align*}
\epsilon_{abc}\epsilon_{de}{}^c=e(g_{ad}g_{be}-g_{ae}g_{bd}).
\end{align*}
We can use the volume form to dualise the 2-form $F_{ab}$, so that 
\[
F_{ab}=\frac{e}{2}\epsilon_{abc}F^c
\]
where $F^a=\epsilon^{abc}F_{bc}$.
We can therefore write
\begin{align}\label{3constraint}
X^e\nd_eR_{ac}=&S_{ac}+\P_c{}^bF_{ba}+\P_a{}^bF_{bc}\nonumber\\
=&S_{ac}+\frac{e}{2}\epsilon_{bad}\P_c{}^bF^d+\frac{e}{2}\epsilon_{bcd}\P^b{}_aF^d\nonumber\\
=&S_{ac}-e\P^b{}_{(c}\epsilon_{a)bd}F^d.
\end{align}
Tracing the $a$ and $c$ indices gives the integrability condition (\ref{3drsc}).
Now let us call 
\begin{align*}
E_{(ca)d}=\P^b{}_{(c}\epsilon_{a)bd}.
\end{align*}
A computation shows that
\begin{align*}
E^{(ca)}{}_bE_{(ca)d}=&\left(\frac{1}{2}\P^{ea}\epsilon^c{}_{eb}+\frac{1}{2}\P^{ec}\epsilon^a{}_{eb}\right)\left(\frac{1}{2}\P^{f}{}_{a}\epsilon_{cfd}+\frac{1}{2}\P^{f}{}_{c}\epsilon_{afd}\right)\\\\
=&\frac{1}{2}\P^{ea}\P^f{}_a \epsilon^c{}_{eb}\epsilon_{cfd}
+\frac{1}{2}\P^{ea}\P^f{}_c \epsilon^c{}_{eb}\epsilon_{afd}\\
=&\frac{e}{2}\P^{ea}\P^f{}_a\left(g_{ef}g_{bd}-g_{ed}g_{bf}\right)+\frac{e}{2}\P^{ea}\P^f{}_c\left(\de^c{}_ag_{ef}g_{bd}+\de^c{}_dg_{ea}g_{bf}\right.\\
&\left.+\de^c{}_fg_{ed}g_{ba}
-\de^c{}_ag_{bf}g_{ed}-\de^c{}_dg_{ba}g_{ef}
-\de^c{}_fg_{bd}g_{ea}\right)\\
=&e (\P_{ac}\P^{ac}g_{bd}+\P\P_{bd}-\frac{3}{2}\P_{b}{}^e\P_{de}-\frac{1}{2}\P^2 g_{bd})\\
=&eQ_{bd}.
\end{align*}
Contracting (\ref{3constraint}) throughout with $E^{(ca)}{}_{b}$, we obtain
\begin{align*}
(X^e\nd_eR_{ac})E^{(ac)}{}_b
=&S_{ac}E^{(ab)}{}_b-Q_{bd}F^d.
\end{align*}

Hence provided that $Q^b{}_{d}$ is invertible, (so that there exists $\tilde Q^{a}{}_c$ such that $\tilde Q^{a}{}_cQ^c{}_b=\de^a{}_b$), we can solve for $F_d$ in terms of $X_a$ and invariants of the (pseudo)-Riemannian structure, to obtain
\begin{align}\label{3dfx}
F_a=\tilde Q_{ab}(S_{dc}E^{(dc)b}-(X^e\nd_eR_{dc})E^{(dc)b}).
\end{align}
In other words there is an algebraic relation between $X_a$ and $F_a$ in 3 dimensions. Plugging the expression for $F_a$ given by (\ref{3dfx})
back into the differential constraint
\[
X^e\nd_eR_{ac}=S_{ac}-eE_{(ca)d}F^d
\]
gives
\begin{align*}
X^e\nd_eR_{ac}=S_{ac}-eE_{(ca)d}\tilde Q^d{}_{b}(S_{fk}E^{(fk)b}-X^e(\nd_e R_{fk})E^{(fk)b}),
\end{align*}
so that
\begin{align*}
X^e\nd_eR_{ac}-X^e(\nd_e R_{fk})eE_{(ca)d}\tilde Q^d{}_{b}E^{(fk)b}=S_{ac}-eE_{(ca)d}\tilde Q^d{}_{b}S_{fk}E^{(fk)b},
\end{align*}
which implies
\begin{equation*}
X^e(\nd_eR_{ac}-e(\nd_e R_{fk})E_{(ca)d}\tilde Q^d{}_{b}E^{(fk)b})=S_{ac}-eE_{(ca)d}\tilde Q^d{}_{b}S_{fk}E^{(fk)b}.
\end{equation*}
This is not identically zero, and we have eliminated $F^a$ without differentiating (\ref{3dfx}) further. Also, by taking the metric trace, we obtain the constraint (\ref{3drsc}).
To summarise, we have:
\begin{proposition}\label{3dprop}
Let $(M,g)$ be a (pseudo)-Riemannian $3$-manifold. Let $\P_{ab}$ be the Schouten tensor and $\P=g^{ab}\P_{ab}$ be its trace.
Assume that 
\begin{align*}
Q^b{}_{d}=\P_{ac}\P^{ac}\de^b{}_{d}+\P\P^b{}_{d}-\frac{3}{2}\P^{be}\P_{de}-\frac{1}{2}\P^2 \de^b{}_{d}
\end{align*}
has non-zero determinant. Then every $c_1=0$ generalised Ricci soliton $(g, X)$ has
\begin{equation}\label{Prop5.1}
\boxed{
X^e(\nd_eR_{ac}-e(\nd_e R_{fk})E_{(ca)d}\tilde Q^d{}_{b}E^{(fk)b})=S_{ac}-eE_{(ca)d}\tilde Q^d{}_{b}S_{fk}E^{(fk)b}.}
\end{equation}
In formula (\ref{Prop5.1}) the symbol $\tilde Q^a{}_b$ denotes the matrix inverse of $Q^a{}_b$.
We also have
\[
S_{ac}=c_2(-R_c{}^eR_{ae}-R^b{}_c{}^e{}_aR_{be})-2\la R_{ca}+c_2(\nd_a\nd^bR_{cb}-\nd_a\nd_cR)
-c_2(\De R_{ca}-\nd^b\nd_cR_{ba})
\]
and
\[
E_{(cd)b}=\P^a{}_{(c}\epsilon_{d)ab}.
\]
\end{proposition}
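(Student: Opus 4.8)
The plan is to follow the three-stage reduction visible in the computation preceding the statement, tracking where the hypothesis $n=3$ is essential. \emph{Stage 1 (scalar integrability condition):} starting from the closed system (\ref{3drss}) with $c_1=0$, trace its second equation on $a$ and $b$ and use the contracted Bianchi identity $\nd_a R = 2\nd^b R_{ab}$ to obtain $\nd^a F_{ac} = -\frac{c_2}{2}\nd_c R - R_{cd}X^d$. Differentiating once more, substituting $\nd_b X_c$ and $\nd_b F_{cd}$ from (\ref{3drss}), contracting on $b$ and $c$, and invoking $\nd^a\nd^b F_{ab}=0$ (valid for any $2$-form by the Ricci identity) yields the scalar constraint (\ref{3drsc}). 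This stage is cheap, and (\ref{3drsc}) will later serve as a built-in consistency check, since it must reappear as the metric trace of (\ref{Prop5.1}).

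\emph{Stage 2 (tensorial constraint relating $X^e\nd_e R_{ac}$ to $F^a$):} differentiate the second equation of (\ref{3drss}) to obtain $\nd_d\nd_a F_{bc}$ and skew-symmetrize in $d$ and $a$; by the Ricci identity the left-hand side becomes curvature acting on $F$, while on the right the term $\nd_{[d}R_{bc}{}^e{}_{a]}X_e$ collapses, via the second Bianchi identity, to $\nd^e R_{dabc}$. After tracing $d$ and $b$ one uses the two genuinely $3$-dimensional inputs — $R_{ab}=\P_{ab}+\P g_{ab}$ and the expression of $R_{abcd}$ purely in terms of the Schouten tensor — so that the curvature-on-$F$ terms reduce to a simple contraction of $F$ with $\P_{ab}$. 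Dualising $F_{ab}=\frac{e}{2}\epsilon_{abc}F^c$ and setting $E_{(ca)d}=\P^b{}_{(c}\epsilon_{a)bd}$ then brings the equation into the form $X^e\nd_e R_{ac}=S_{ac}-eE_{(ca)d}F^d$, with $S_{ac}$ as in the statement.

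\emph{Stage 3 (eliminating $F^a$):} contract the Stage 2 constraint with $E^{(ca)}{}_b$. The crux is the identity $E^{(ca)}{}_b E_{(ca)d}=eQ_{bd}$, which I would establish by expanding the product of two volume forms with $\epsilon_{abc}\epsilon_{de}{}^c=e(g_{ad}g_{be}-g_{ae}g_{bd})$ and collecting like terms; I expect this to be the main obstacle, not because it is conceptually deep but because it is the one bookkeeping-heavy step in which the exact combination $Q^b{}_d=\P_{ac}\P^{ac}\de^b{}_d+\P\P^b{}_d-\frac{3}{2}\P^{be}\P_{de}-\frac{1}{2}\P^2\de^b{}_d$ must be pinned down precisely, and any slip corrupts both the invertibility hypothesis and the final formula. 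Assuming $Q^b{}_d$ is invertible with inverse $\tilde Q^a{}_b$, solve for $F_a$ — this is (\ref{3dfx}) — substitute it back into $X^e\nd_e R_{ac}=S_{ac}-eE_{(ca)d}F^d$, move all $X$-dependent terms to the left, and factor out $X^e$ to reach (\ref{Prop5.1}). As a final check I would take the metric trace of (\ref{Prop5.1}) and verify that it returns (\ref{3drsc}), confirming both that the elimination of $F^a$ is consistent and that the resulting constraint is genuinely non-trivial.
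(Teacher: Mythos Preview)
Your proposal is correct and follows essentially the same route as the paper: deriving the scalar constraint (\ref{3drsc}) from the divergence of $F$, differentiating the prolongation and tracing to obtain $X^e\nd_eR_{ac}=S_{ac}-eE_{(ca)d}F^d$, then contracting with $E^{(ca)}{}_b$ and using the identity $E^{(ca)}{}_bE_{(ca)d}=eQ_{bd}$ to eliminate $F^d$. Your identification of the $E\cdot E=eQ$ computation as the one delicate bookkeeping step is accurate, and the trace check you propose is exactly the consistency the paper notes.
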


For certain examples constraint (\ref{Prop5.1}) is already sufficient to conclude that locally some metrics cannot admit any solution to the generalised Ricci soliton equations with $c_1=0$. Instead of a tensorial obstruction, we obtain algebraic relations that force a linear system of equations to be inconsistent. To illustrate this, let us take the metric given by
\[
g=e^{z} \der x^2+e^{-z} \der y^2+z \der z^2.
\]
We ask: can this metric admit a homothetic vector field that is not Killing $(c_1=0, c_2=0, \la \neq 0)$?
For generic $c_2$ and $\la$, we compute and find that 
\[
Q_{ab}\der x^a\der x^b=\frac{e^z(4z^2-4z+1)}{32 z^4}\der x^2+\frac{e^{-z}(4 z^2+4z+1)}{32 z^4}\der y^2+\frac{1}{8 z^3} \der z^2, 
\]
and this is invertible on an open set away from $(2z-1)^2(2z+1)^2=0$.
We obtain
\[
\tilde Q_{ab}\der x^a\der x^b=\frac{32 z^4 e^{z}}{4z^2-4z+1}\der x^2+\frac{32 e^{-z}z^4}{4 z^2+4z+1}\der y^2+8 z^5 \der z^2. 
\]
We find that $S_{ab}E^{(ab)c}=0$, so the right hand side expression of (\ref{Prop5.1}) reduces to 
\begin{align*}
S_{ac}\der x^a \der x^c=&\frac{e^z(-2 \la z^3+2 c_2 z^2-7 c_2)}{4z^5} \der x^2-\frac{e^{-z}(-2\la z^3+2 c_2 z^2-7 c_2)}{4 z^5} \der y^2\\
&-\frac{-2 \la z^3+c_2 z^2-2 c_2}{2 z^3} \der z^2.
\end{align*}
Let 
\[
X=X_1(x,y,z) \partial_x+X_2(x,y,z) \partial_y +X_3(x,y,z) \partial_z
\]
denote the vector field $X$ in (\ref{Prop5.1}).
The left hand side of (\ref{Prop5.1}) gives
\[
-\frac{e^zX_3}{2 z^3}\der x^2+\frac{e^{-z}X_3}{2 z^3}\der y^2+\frac{X_3}{2z}\der z^2.
\]
Equating both sides, we obtain the constraint that:
\begin{align}\label{consthom}
&-\frac{e^z(-2 \la z^3+2 X_3 z^2+2 c_2 z^2-7 c_2)}{4z^5}\der x^2+\frac{e^{-z}(-2 \la z^3+2 X_3 z^2+2 c_2 z^2-7 c_2)}{4z^5}\der y^2\nonumber \\
&+\frac{-2 \la z^3+ X_3 z^2+ c_2 z^2-2 c_2}{2z^3}\der z^2=0.
\end{align}
The metric trace of this equation gives
\[
\frac{-2 \la z^3+X_3 z^2+c_2 z^2-2 c_2}{2 z^4}=0,
\]
which is the scalar constraint (\ref{3drsc}) obtained previously.
This gives
\[
X_3=\frac{2\la z^3-c_2 z^2+2 c_2}{z^2}. 
\]
Substituting this expression for $X_3$ back into the constraint (\ref{consthom}) gives
\[
0=-\frac{e^z (2\la z^3-3 c_2)}{4 z^5}\der x^2+\frac{e^{-z}(2\la z^3-3c_2)}{4 z^5}\der y^2.
\]
For $c_2=0$, this is non-zero unless $\la=0$ also. Hence we conclude that locally this metric cannot admit any homothetic vector field unless $\la=0$. It can be verified that this metric admits Killing symmetries, but solving the equations for homotheties with $\la \neq 0$ gives rise to an inconsistent system.

\section{Rewriting generalised 2D Ricci soliton equations in terms of a potential}
\label{potential}
We now restrict ourselves to the 2-dimensional setting to get explicit examples of generalised Ricci solitons. In this section we focus on the situation where $X_a$ is non-null, i.e. 
$$X_aX^a \neq 0.$$ 
We recall that with our notation as in Section \ref{2DgRsT}, 
the generalised Ricci soliton equations in two dimensions are given by
\begin{align}\label{2dc1=1}
c_1&=1: &\nd_{(a}X_{b)}+X_aX_b-(c_2K+\la)g_{ab}&=0,\\ \label{2dc1=0}
c_1&=0: & \nd_{(a}X_{b)}-(c_2K+\la)g_{ab}&=0.
\end{align}
\subsection{Generalisation of Jezierski's formulation of 2D near-horizon equations}
It was recently noted \cite{stable} in the context of General Relativity that the special case $c_1=1, c_2=\frac{1}{2}$, $\la=0$ of generalised Ricci soliton equations admits a convenient description as follows.

The generalised Ricci soliton equation in this case, 
\begin{align}\label{nhgv2}
\nd_{(a}X_{b)}+X_aX_b-\frac{K}{2}g_{ab}=0,
\end{align}
called by the General Relativity community the basic equation of vacuum near-horizon geometry,
is equivalent to the system of 3 equations:
\begin{equation}
\begin{aligned}
\nd_{a}\Phi^a=&1,\\
\nd_{[a}\Phi_{b]}=&0,\\
\nd_aX^a+X_aX^a-K=&0,
\end{aligned}\label{zej}
\end{equation}
where 
\[
\Phi_a=\frac{X_a}{X_bX^b}.
\]
The second of equations (\ref{zej}) is considered as a condition for a local existence of a potential $V$ such that $\Phi_a=\nabla_aV$. The first equation is then the Poisson equation
$$\triangle V=1$$
for the potential $V$. A solution to the Poisson equation is then a solution to the basic equation of near horizon geometry if and only if the vector field $X_a$ satisfies the last equation (\ref{zej}).

The Poisson system,
\[
\nd_{[a}\Phi_{b]}=0, \qquad \nd_a\Phi^a=1
\]
corresponds to the trace-free part of the basic equation of near horizon geometry. The last equation in the system (\ref{zej}) is its trace.

In \cite{Jezierski} and \cite{stable} an axially symmetric ansatz for $g$ was made, with a particular class of solutions to the  Poisson's equations chosen, such that the last equation (\ref{zej}) reduced to a second order linear ODE on a single function of one variable. See Section \ref{sec:2dex} of the present paper to get our version of this result.

In the rest of this section we give two propositions which extend this `near-horizon geometry' approach to the case of the generalised Ricci soliton equations with general $c_2$, general $\lambda$, and with $c_1=1$ or $0$. 
\begin{proposition}[Poisson]\label{Poisson}
The 2-dimensional generalised Ricci soliton equation with $c_1=1$
\[
\nd_{(a}X_{b)}+X_aX_b-(c_2K+\la)g_{ab}=0,
\]
is equivalent to 
solving the equations
\begin{align*}
\nd_{a}\Phi^a=&1,\\
\nd_{[a}\Phi_{b]}=&0,\\
\nd_aX^a+X_aX^a-2 c_2 K-2\la=&0,
\end{align*}
where $\Phi_a=\frac{X_a}{X_bX^b}$.
\end{proposition}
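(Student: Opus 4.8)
The plan is to show the equivalence in both directions by splitting the second-order equation $\nd_{(a}X_{b)}+X_aX_b-(c_2K+\la)g_{ab}=0$ into its trace and trace-free parts, and then re-expressing the trace-free part in terms of $\Phi_a=X_a/(X_bX^b)$. Recall that we are working in the non-null regime $X_aX^a\neq 0$, so $\Phi_a$ is well defined and $X_a=\Phi_a/(\Phi_b\Phi^b)$; in particular the map $X\mapsto\Phi$ is a local diffeomorphism on the set where $X$ is non-null, so no information is lost in passing between the two descriptions. The trace of the equation (contracting with $g^{ab}$ in dimension $2$, where $g^{ab}g_{ab}=2$) immediately gives $\nd_aX^a+X_aX^a-2c_2K-2\la=0$, which is the third listed equation; this part is symmetric and requires no use of $\Phi$.

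The heart of the argument is the trace-free part. Writing the equation as $\nd_{(a}X_{b)}+X_aX_b-(c_2K+\la)g_{ab}=0$ and subtracting $\tfrac12$ the trace times $g_{ab}$, the trace-free statement is
\begin{align*}
\nd_{(a}X_{b)}+X_aX_b-\tfrac12\left(\nd_cX^c+X_cX^c\right)g_{ab}=0.
\end{align*}
I would now compute $\nd_{(a}\Phi_{b)}$ directly from $\Phi_a=X_a/\si$ with $\si=X_cX^c$. Using $\nd_a\si=2X^c\nd_aX_c$ and the first equation of the closed system (or just the definition), one finds
\begin{align*}
\nd_a\Phi_b=\frac{\nd_aX_b}{\si}-\frac{X_b\,\nd_a\si}{\si^2}=\frac{1}{\si}\left(\nd_aX_b-\frac{2X_bX^c\nd_aX_c}{\si}\right),
\end{align*}
and symmetrising in $a,b$ one should be able to recognise that $\nd_{(a}\Phi_{b)}-\frac{1}{\si}g_{ab}$ is proportional to the trace-free part of $\nd_{(a}X_{b)}+X_aX_b$, the constant of proportionality being $1/\si$. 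Concretely, the claim is that the trace-free part of the original equation is equivalent to the pair $\nd_a\Phi^a=1$ and $\nd_{[a}\Phi_{b]}=0$: the trace-free symmetric part of $\nd_a\Phi_b$ encodes the trace-free part of the original equation, while the trace $\nd_a\Phi^a=1$ follows because $\Phi^a\Phi_a=1/\si$ and the trace of the original equation controls $\nd_a\si$. The skew part $\nd_{[a}\Phi_{b]}=0$ will come out of the fact that the original equation forces $\nd_{[a}X_{b]}=F_{ab}$ to be tied to $X$ in just the right way — more precisely, $\nd_{[a}\Phi_{b]}=\frac{1}{\si}F_{ab}-\frac{1}{\si^2}X_{[a}\nd_{b]}\si$, and substituting $\nd_b\si=2X^c\nd_bX_c=2X^c(-X_bX_c+(c_2K+\la)g_{bc}+F_{bc})$ using the prolonged equation should make this vanish identically. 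So the skew part of the $\Phi$-equation is automatic given the original equation, and conversely it must be imposed (it is the integrability condition for the potential $V$).

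The main obstacle I anticipate is the bookkeeping in showing that the trace-free symmetric part of $\nd_a\Phi_b$ vanishes if and only if the trace-free part of $\nd_aX_b+X_aX_b$ does — one must be careful that dividing by $\si$ is an isomorphism on trace-free symmetric $2$-tensors (it is, since $\si\neq 0$) and that no stray trace terms are generated. In dimension $2$ there is a small simplification: every symmetric trace-free $2$-tensor $T_{ab}$ satisfies $\epsilon_a{}^cT_{cb}=\epsilon_b{}^cT_{ca}$ type identities, and one can dualise with $\epsilon_{ab}$ as in Section \ref{2DgRsT} to reduce the trace-free tensor equation to a single scalar equation, which makes the equivalence transparent. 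The converse direction — assuming the three $\Phi$-equations, reconstruct $X$ and verify the original equation — then amounts to reading the above computation backwards: the two Poisson equations give the trace-free part, the third equation gives the trace, and together they reassemble $\nd_{(a}X_{b)}+X_aX_b-(c_2K+\la)g_{ab}=0$. I would present the forward direction in detail and remark that the backward direction is obtained by reversing each step.
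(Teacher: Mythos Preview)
Your overall plan is sound and is precisely the kind of direct computation the paper has in mind; note that the paper itself omits the proof entirely, saying only that it ``parallels the proof of Theorem~4 in \cite{stable}''. So there is no paper-side argument to compare against beyond ``compute $\nabla_a\Phi_b$ and check''.

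That said, one passage in your outline is muddled and would lead you astray if you followed it literally. You write that ``the trace-free symmetric part of $\nabla_a\Phi_b$ encodes the trace-free part of the original equation, while the trace $\nabla_a\Phi^a=1$ follows because \ldots\ the trace of the original equation controls $\nabla_a\sigma$.'' This has the decomposition backwards. The Poisson system says nothing about the trace-free symmetric part of $\nabla_a\Phi_b$; it constrains only the \emph{trace} ($=1$) and the \emph{skew} part ($=0$) of $\nabla\Phi$. And if you carry out the calculation you sketched, you find that \emph{both} $\nabla_a\Phi^a=1$ and $\nabla_{[a}\Phi_{b]}=0$ follow from the trace-free part of the soliton equation alone (the trace of the soliton equation is needed only for the third listed equation). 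Concretely, writing $\nabla_{(a}X_{b)}+X_aX_b=S_{ab}$ and $\sigma=X_cX^c$, one gets
\[
\nabla_a\Phi^a-1=\frac{S}{\sigma}-\frac{2S_{ab}X^aX^b}{\sigma^2},\qquad
\nabla_{[a}\Phi_{b]}=-\frac{2X_{[a}S_{b]c}X^c}{\sigma^2},
\]
after the $F$-terms cancel via the 2D identity $X_{[a}F_{b]c}X^c=\tfrac{\sigma}{2}F_{ab}$. So the two $\Phi$-equations together say exactly that $S_{ab}X^b=\tfrac{S}{2}X_a$, i.e.\ the trace-free part of $S$ annihilates the non-null vector $X$; in two dimensions this forces the trace-free part of $S$ to vanish. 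That 2D linear-algebra fact is the crux of the converse direction, and you should state it rather than say the converse ``amounts to reading the computation backwards''.
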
 
\begin{proposition}[Laplace]\label{Laplace}
The 2-dimensional generalised Ricci soliton equation with $c_1=0$
\[
\nd_{(a}X_{b)}-(c_2K+\la)g_{ab}=0
\]
is equivalent to the equations
\begin{equation}
\begin{aligned}
\nd_{a}\Phi^a=&0,\\
\nd_{[a}\Phi_{b]}=&0,\\
\nd_aX^a-2 c_2 K-2\la=&0,\label{zeji}
\end{aligned}
\end{equation}
where the relation between $X$ and $\Phi$ is given by $\Phi_a=\frac{X_a}{X_bX^b}$.
\end{proposition}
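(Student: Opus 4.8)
\textbf{Proof plan for Proposition \ref{Laplace}.} The plan is to mimic the decomposition used in Proposition \ref{Poisson}, adapting it to the $c_1=0$ case where the quadratic term $X_aX_b$ is absent. First I would split the equation $\nd_{(a}X_{b)}-(c_2K+\la)g_{ab}=0$ into its trace and trace-free parts in two dimensions. The metric trace gives $\nd_aX^a - 2(c_2K+\la)=0$, which is exactly the third equation in \eqref{zeji}. The trace-free part is $\nd_{(a}X_{b)} - \tfrac12(\nd_cX^c)\, g_{ab}=0$, so the content of the proposition is to show that, away from the zero set of $X_bX^b$, this trace-free equation is equivalent to the pair $\nd_a\Phi^a=0$, $\nd_{[a}\Phi_{b]}=0$ with $\Phi_a = X_a/(X_bX^b)$.

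The key computation is to express $\nd_a\Phi_b$ in terms of $\nd_aX_b$. Writing $\si = X_bX^b$ (assumed nonzero), one has $\nd_a\Phi_b = \si^{-1}\nd_aX_b - \si^{-2}(\nd_a\si)X_b$, and $\nd_a\si = 2X^c\nd_aX_c$. Substituting the generalised Ricci soliton equation (in the $c_1=0$ form) for the symmetric part $\nd_{(a}X_{c)} = (c_2K+\la)g_{ac}+F_{ac}$... more simply, I would just use $\nd_aX_c = \nd_{(a}X_{c)} + \nd_{[a}X_{c]}$ and the hypothesis to replace $\nd_{(a}X_{c)}$ by $(c_2K+\la)g_{ac}$. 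A short calculation then shows that the antisymmetric part $\nd_{[a}\Phi_{b]}$ is proportional to $\si^{-2}$ times the antisymmetric part of $\nd_aX_b$ combined with a term that vanishes, so that $\nd_{[a}\Phi_{b]}=0$ reduces to $F_{ab}=\nd_{[a}X_{b]}=0$ — but this is \emph{not} assumed; rather, the correct statement is that $\nd_{[a}\Phi_{b]}=0$ holds automatically once the trace-free soliton equation holds, because in two dimensions the trace-free symmetric condition on $\nd_aX_b$ forces the skew part to have the special form $\frac{e}{2}F\epsilon_{ab}$ and one checks directly that $\Phi_a$ is closed. I would carry out this index computation, contracting with $\epsilon^{ab}$ and with $g^{ab}$ as in Section \ref{2DgRsT}, and verify both $\nd_a\Phi^a=0$ and $\nd_{[a}\Phi_{b]}=0$. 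Conversely, given the two $\Phi$-equations plus the trace equation, I would invert the relation $X_a = \Phi_a/(\Phi_b\Phi^b)$ and run the computation backwards to recover $\nd_{(a}X_{b)}=(c_2K+\la)g_{ab}$.

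The cleanest route is probably to observe that $c_1=0$ is the formal limit relating Proposition \ref{Laplace} to Proposition \ref{Poisson}: the only structural difference is that the right-hand side of $\nd_a\Phi^a$ changes from $1$ to $0$, which is precisely the effect of deleting the $X_aX_b$ term (that term is what produced the inhomogeneity $\nd_a\Phi^a=1$ via $X^a\nd_a(X_bX^b)/(X_bX^b)^2$-type contributions). So I would either (i) repeat the proof of Proposition \ref{Poisson} verbatim dropping the quadratic term and tracking where the $1$ becomes a $0$, or (ii) if Proposition \ref{Poisson} has already been proved in the paper, simply note that the identical manipulation with $X_aX_b\equiv 0$ gives the stated system, the homogeneity $\nd_a\Phi^a=0$ being the trace-free divergence identity $\nd_a(X^a/\si) = \si^{-1}\nd_aX^a - 2\si^{-2}X^aX^c\nd_aX_c = \si^{-1}(2c_2K+2\la) - 2\si^{-2}X^aX^c((c_2K+\la)g_{ac}) = 0$ upon using the trace equation. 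The main obstacle I anticipate is purely bookkeeping: verifying $\nd_{[a}\Phi_{b]}=0$ requires using the full two-dimensional structure (the identity $\epsilon_{ab}\epsilon_{cd}=e(g_{ac}g_{bd}-g_{ad}g_{bc})$ and $F_{ab}=\tfrac{e}{2}\epsilon_{ab}F$), and one has to be careful that the converse direction genuinely reproduces the soliton equation rather than something weaker — in particular one must check that the non-null hypothesis $X_aX^a\neq 0$ is used exactly where needed and that no spurious constraint on $F$ is introduced.
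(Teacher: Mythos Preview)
Your proposal is correct and is exactly what the paper intends: the paper omits the proof entirely, saying only that it ``parallel[s] the proof of Theorem~4 in \cite{stable}'' (i.e.\ the $c_1=1$ case of Proposition~\ref{Poisson}) and is straightforward. Your option~(ii) --- rerunning the $c_1=1$ argument with the quadratic term $X_aX_b$ deleted, so that the inhomogeneity in $\nd_a\Phi^a$ drops from $1$ to $0$ --- is precisely that parallel, and your explicit divergence computation together with the 2D identity $\epsilon_{ac}X_b-\epsilon_{bc}X_a=\epsilon_{ab}X_c$ (which makes $\nd_{[a}\Phi_{b]}$ vanish identically once the soliton equation holds) fills in the details the paper leaves out.
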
 
The proofs of the propositions are straightforward. They parallel the proof of Theorem 4 in \cite{stable}, and are therefore omitted.

\section{2D examples of generalised Ricci solitons}\label{sec:2dex}

We now produce explicit examples of 2-dimensional generalised Ricci solitons $(g,X)$ of both signatures: Riemannian $(++)$ and Lorentzian $(+-)$. In the Lorentzian situation we only consider the case when $X_aX^a \neq 0$, i.e.\ when the soliton $(g,X)$ vector $X$ is non-null. The case where $X$ is null will be treated in Section \ref{null}.

In the Riemannian setting, we use the following metric ansatz:
\begin{equation}
g=A(y)\der x^2+\frac{1}{A(y)}\der y^2,\label{mansatz}\end{equation}

The Gaussian curvature of this metric is: 
\[
K=-\frac{1}{2}A''(y).
\]

A similar metric ansatz will be used in the Lorentzian setting. In this situation we will use the metric
\begin{equation}
g=A(y)\der x^2-\frac{1}{A(y)}\der y^2,\label{lool}
\end{equation}
where again $A(y)$ is a function of a single variable $y$. For this metric, 
\[
K=\frac{1}{2}A''(y).
\]

In the rest of this Section we will impose the generalised Ricci soliton equations (\ref{grs0}) on the pairs $(g,X)$, with $g$ being one of (\ref{mansatz}) or (\ref{lool}). We will first give the solutions with $c_1=0$, and then with $c_1\neq 0$.  Some important classical examples, such as the Hamilton cigar soliton and its Lorentzian counterpart, as well as the generalised Ricci soliton describing the extremal Kerr black hole horizon, will be obtained as special cases. 
\subsection{2D Ricci solitons and homotheties}
To get explicit examples of Riemannian $c_1=0$ generalised Ricci solitons $(g,X)$, we take the metric $g$ as in (\ref{mansatz}), and the 1-form $X$ given by
\begin{align}\label{laplacex}
X=A(y)\nu \der x+\mu\der y,
\end{align}
with  $\mu$, $\nu$ real constants, such that $\mu^2+\nu^2\neq 0$. We then have:
\begin{align*}
F=\der X=&-\nu A'(y)\der x \wedge \der y,
\end{align*}
so the gradient case, $F\equiv 0$, is obtained when either $\nu=0$ or $g$ is flat.

It turns out that for such $X$ the generalised Ricci soliton equations with $c_1=0$ reduce to a single second order ODE
\begin{align}\label{rssol}
c_2 A''+\mu A'-2 \la=0. 
\end{align}
This is the only equation to be solved for the ansatz (\ref{mansatz}) and (\ref{laplacex}) to obtain $c_1=0$ Ricci solitons.

\begin{remark}
If we refer back to Section \ref{potential}, then the 1-form $\Phi_a$ related to our $X_a$ via $\Phi_a=\frac{X_a}{X_bX^b}$, identically satisfies the Laplace condition given by the first two equations of Proposition \ref{Laplace}.  The remaining part of the system (\ref{zeji}), namely the scalar equation for the trace $\nd^aX_a$, becomes our ODE (\ref{rssol}).
\end{remark} 

The most general solution to (\ref{rssol}) when $c_2 \neq 0$ and $\mu\neq 0$ is
$$
A \left(y\right)=2\frac{\la}{\mu}y+\alpha {\rm e}^{-\frac{\mu}{c_2}y}+\beta,
$$
where $\alpha$, $\beta$ are constants. 

If $\mu=0$, the general solution is:
\begin{align}\label{1c10sol}
A \left(y\right)=\frac{\la}{c_2}y^2+\alpha y+\beta,
\end{align}
but the solution has constant Gaussian curvature $K=-\tfrac{\la}{c_2}$. Likewise, if $c_2=0$ and $\mu\neq 0$ we obtain the flat metric with the general solution to (\ref{rssol}) given by: 
\begin{align}\label{2c10sol}
A \left(y\right)=2\frac{\la}{\mu}y+\beta.
\end{align}
Eventually, the most degenerate case: $c_2=0$ and $\mu=0$, gives $\la=0$, with any arbitrary $A(y)$ being a solution to (\ref{rssol}). This last fact says that the vector field $X=\partial_x$, corresponding to the 1-form $X=A(y)\nu \der x$ is always a Killing symmetry for the metric (\ref{mansatz}), whatever $A=A(y)$ is. 

Now restricting to the general non-constant-curvature and non-Killing case we have the following proposition: 
\begin{proposition}\label{hcc}
For every $\lambda$ and $c_2\neq 0$ there is a 4-parameter family of Riemannian Ricci solitons, parametrised by $(\alpha,\beta,\mu\neq 0,\nu)$ explicitly given by:
$$
\boxed{\begin{aligned}
g=&
\Big(2\frac{\la}{\mu}y+\alpha {\rm e}^{-\frac{\mu}{c_2}y}+\beta\Big)\der x^2+\frac{1}{2\frac{\la}{\mu}y+\alpha {\rm e}^{-\frac{\mu}{c_2}y}+\beta}\der y^2
,\\ 
X=&\nu\Big(2\frac{\la}{\mu}y+\alpha {\rm e}^{-\frac{\mu}{c_2}y}+\beta\Big)\der x+\mu  \der y.
\end{aligned}
}
$$
The Gauss curvature of the metric is:
$$K=-\tfrac{\al\mu^2}{2c_2^2}{\rm e}^{-\frac{\mu}{c_2}y},$$
and the 2-form $F_{ab}$ is given by
$$F~=~\nu~\frac{\alpha\mu^2{\rm e}^{-\frac{\mu}{c_2}y}-2c_2\la}{\mu c_2}~\der x\wedge \der y.$$
\end{proposition}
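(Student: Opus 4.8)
The plan is to obtain this family as an immediate consequence of the reduction carried out in the paragraphs preceding the statement, and then to read off the curvature $K$ and the $2$-form $F$ by differentiation.

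\textbf{Step 1 (invoke the reduction).} For the metric ansatz (\ref{mansatz}), $g = A(y)\der x^2 + \tfrac{1}{A(y)}\der y^2$, together with the $1$-form (\ref{laplacex}), $X = A(y)\nu\,\der x + \mu\,\der y$, it was shown above that the $c_1 = 0$ generalised Ricci soliton equation (\ref{2dc1=0}) is equivalent to the single second order ODE (\ref{rssol}), $c_2 A'' + \mu A' - 2\la = 0$. I would simply cite this; a self-contained check would amount to writing out $\nd_{(a}X_{b)} - (c_2 K + \la) g_{ab}$ with $K = -\tfrac12 A''$, observing that its $xy$- and $yy$-components vanish identically while its $xx$-component is a nonzero multiple of $c_2 A'' + \mu A' - 2\la$.

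\textbf{Step 2 (integrate the ODE).} Assuming $c_2 \neq 0$ and $\mu \neq 0$, (\ref{rssol}) is a linear constant-coefficient ODE: the homogeneous equation $c_2 A'' + \mu A' = 0$ has general solution $\al\, e^{-\mu y/c_2} + \be$, and $2\tfrac{\la}{\mu}y$ is a particular solution, so the general solution is $A(y) = 2\tfrac{\la}{\mu} y + \al\, e^{-\mu y/c_2} + \be$. Adjoining the free constants $\mu \neq 0$ and $\nu$ (the constraint $\mu^2 + \nu^2 \neq 0$ of (\ref{laplacex}) being automatic since $\mu \neq 0$) gives the asserted $4$-parameter family $(\al, \be, \mu, \nu)$, and by Step 1 each such pair $(g, X)$ is a $c_1 = 0$ generalised Ricci soliton.

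\textbf{Step 3 (read off $K$ and $F$).} Differentiating twice, $A''(y) = \al\,\tfrac{\mu^2}{c_2^2} e^{-\mu y/c_2}$, whence $K = -\tfrac12 A'' = -\tfrac{\al \mu^2}{2 c_2^2} e^{-\mu y/c_2}$; and $A'(y) = 2\tfrac{\la}{\mu} - \al\,\tfrac{\mu}{c_2} e^{-\mu y/c_2}$, so, using $F = \der X = -\nu A'(y)\,\der x \wedge \der y$ established above, $F = \nu\,\tfrac{\al \mu^2 e^{-\mu y/c_2} - 2 c_2 \la}{\mu c_2}\,\der x \wedge \der y$, exactly as displayed. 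I expect no genuine obstacle: the substantive point — that the full soliton system collapses to the single scalar ODE (\ref{rssol}) — has already been carried out above, so what remains is a routine integration of a linear ODE and two differentiations, the only care needed being the bookkeeping of signs and of the factor $\mu c_2$ in the formula for $F$.
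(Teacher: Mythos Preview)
Your proposal is correct and mirrors the paper's own treatment: the proposition is stated immediately after the reduction to the ODE (\ref{rssol}) and the explicit general solution for $c_2\neq 0$, $\mu\neq 0$, so there is no separate proof in the paper beyond exactly the integration and differentiations you outline. Your Steps 1--3 reproduce that argument faithfully, with the computations of $K$ and $F$ checked correctly.
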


For the Lorentzian metric (\ref{lool}), we take the same $X$ given by (\ref{laplacex}) but now with real constants  $\mu$, $\nu$ such that $\nu^2-\mu^2\neq 0$. The generalised Ricci soliton equations with $c_1=0$ in the Lorentzian case reduce to a single second order ODE
\begin{align}\label{rssollor}
c_2 A''+\mu A'+2 \la=0. 
\end{align}

The most general solution to (\ref{rssollor}) when $c_2 \neq 0$ and $\mu\neq 0$ is
$$
A \left(y\right)=-2\frac{\la}{\mu}y+\alpha {\rm e}^{-\frac{\mu}{c_2}y}+\beta,
$$
where $\alpha$, $\beta$ are constants; the other cases besides the most degenerate one give metrics of constant curvature.

Specialising to the case $c_2=-1$, we have

\begin{proposition}
There is a 4-parameter family of Lorentzian Ricci solitons explicitly given by:
$$
\boxed{\begin{aligned}
g=&
\left(-2\frac{\la}{\mu}y+\alpha {\rm e}^{\mu y}+\beta \right)\der x^2-\frac{1}{-2\frac{\la}{\mu}y+\alpha {\rm e}^{\mu y}+\beta}\der y^2
,\\ 
X=&\nu\Big(-2\frac{\la}{\mu}y+\alpha {\rm e}^{\mu y}+\beta\Big)\der x+\mu  \der y.
\end{aligned}
}
$$
\end{proposition}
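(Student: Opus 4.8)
The plan is to follow the same pattern that was used for the Riemannian case in Proposition \ref{hcc}. The starting point is the observation made in the text: for the Lorentzian ansatz $g=A(y)\der x^2-\frac{1}{A(y)}\der y^2$ with $K=\frac12 A''(y)$, and with the $1$-form $X=A(y)\nu\,\der x+\mu\,\der y$ subject to $\nu^2-\mu^2\neq 0$, the generalised Ricci soliton equations with $c_1=0$ collapse to the single ODE $c_2A''+\mu A'+2\la=0$, i.e.\ equation (\ref{rssollor}). So the proof reduces to specialising $c_2=-1$ in that ODE, solving it explicitly, and then feeding the solution back into the ansatz.

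First I would set $c_2=-1$ in (\ref{rssollor}), which gives $-A''+\mu A'+2\la=0$, or equivalently $A''-\mu A'-2\la=0$. This is a constant-coefficient linear ODE: the homogeneous part $A''-\mu A'=0$ has solutions spanned by $1$ and $e^{\mu y}$ (using $\mu\neq 0$, which is forced once we want a non-constant-curvature, non-Killing solution), and a particular solution of $A''-\mu A'=2\la$ is the linear function $A_p(y)=-\frac{2\la}{\mu}y$, since then $A_p''=0$ and $-\mu A_p'=-\mu\cdot(-\frac{2\la}{\mu})=2\la$. Hence the general solution is
\begin{align*}
A(y)=-\frac{2\la}{\mu}y+\alpha e^{\mu y}+\beta,
\end{align*}
with $\alpha,\beta$ arbitrary real constants, matching the displayed general solution in the text just before the proposition. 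Substituting this $A(y)$ into (\ref{lool}) and into $X=A(y)\nu\,\der x+\mu\,\der y$ produces exactly the boxed $(g,X)$ in the statement.

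It then remains to verify that the pair $(g,X)$ so obtained genuinely satisfies (\ref{grs0}) with $c_1=0$, $c_2=-1$ and the given $\la$. This is immediate from the reduction already recorded in the excerpt: the text asserts that for this ansatz the $c_1=0$ equations reduce to (\ref{rssollor}), so conversely any $A$ solving (\ref{rssollor}) with $c_2=-1$ yields a solution. One should also note the genericity/non-degeneracy remarks: $\mu\neq 0$ keeps us away from the constant-curvature branch, $\nu^2-\mu^2\neq 0$ is the standing hypothesis ensuring $X$ is non-null in Lorentzian signature, and the four parameters $(\alpha,\beta,\mu,\nu)$ are genuinely free, so the family is $4$-parameter. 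There is essentially no obstacle here — the only mild point of care is bookkeeping the sign conventions (the $+2\la$ rather than $-2\la$ in the Lorentzian ODE, coming from $K=+\frac12 A''$ in (\ref{lool}) versus $K=-\frac12 A''$ in (\ref{mansatz})), which is why the particular solution picks up the sign it does and the general solution reads $-\frac{2\la}{\mu}y+\alpha e^{\mu y}+\beta$ rather than its Riemannian analogue. Thus the proof is a short specialisation argument rather than a new computation.
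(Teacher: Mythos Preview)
Your proposal is correct and follows essentially the same approach as the paper: the paper simply records the Lorentzian ODE (\ref{rssollor}), writes down its general solution for $c_2\neq 0$, $\mu\neq 0$, and then specialises to $c_2=-1$ to obtain the boxed formula. Your account of solving $A''-\mu A'-2\la=0$ and substituting back into the ansatz is exactly this specialisation, with the sign bookkeeping handled correctly.
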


\begin{example}[Generalised Hamilton's cigar] Let us now specialise to the steady case, $\la=0$.

In such case the formulae from Proposition \ref{hcc} simplify significantly if we introduce a variable $r$ related to $y$ via:
$$y=\frac{c_2}{\mu}\log\frac{\alpha}{\beta\big(\tanh^2(\tfrac{\mu\sqrt{\beta}}{2c_2}r)-1\big)}.$$
With such $r$, and with $\lambda =0$, the metric becomes:
\begin{equation}g=\der r^2+\beta \tanh^2(\tfrac{\mu\sqrt{\beta}}{2c_2}r)\der x^2,\label{hcs9}\end{equation} and the soliton $X$ becomes:
\begin{equation}X=\beta\nu \tanh^2(\tfrac{\mu\sqrt{\beta}}{2c_2}r)\der x+\mu \sqrt{\beta} \tanh(\tfrac{\mu\sqrt{\beta}}{2c_2}r)\der r.\label{hcs8}\end{equation}
As 
$$F=-\frac{\beta^{\frac{3}{2}}\mu\nu}{c_2}{\rm sech}^2(\tfrac{\mu\sqrt{\beta}}{2c_2}r) \tanh(\tfrac{\mu\sqrt{\beta}}{2c_2}r)\der x\wedge\der r,$$
and since we have assumed that $\mu\neq 0$, the soliton is a gradient if and only if $\be\nu=0$. We also note that the Gauss curvature is now:
$$K=\frac{\beta\mu^2}{2c_2^2}{\rm sech}^2(\tfrac{\mu\sqrt{\beta}}{2c_2}r),$$
so $\beta\neq 0$ is needed for the metric not to be flat. 

Now it is worthwhile to note that the metric (\ref{hcs9}) and the vector field (\ref{hcs8}) become the Hamilton cigar soliton if $\nu=0$ and $\beta>0$. The solution (\ref{hcs9})-(\ref{hcs8}) for $\nu\neq 0$ and $\beta>0$ gives a 1-parameter of nongradient Ricci solitons, on the background of the Hamilton cigar metric (\ref{hcs9}). They degenerate to the classical Hamilton gradient Ricci soliton when $\nu\to 0$. 

Note also that the solution  (\ref{hcs9})-(\ref{hcs8}) for the steady Ricci soliton equations makes sense for $\beta<0$. Despite of the appearance of $\sqrt{\beta}$ in formulas for $g$ and $X$, these formulas are real and give rise to a Riemannian metric $g$ even for $\beta<0$. Rewriting $\beta<0$ as $-|\beta|$ we get the following another real form of the generalised Hamilton's cigar:
 $$g=\der r^2+|\beta| \tan^2(\tfrac{\mu\sqrt{|\beta|}}{2c_2}r)\der x^2,$$
$$X=|\beta|\nu \tan^2(\tfrac{\mu\sqrt{|\beta|}}{2c_2}r)\der x-\mu\sqrt{|\beta|} \tan(\tfrac{\mu\sqrt{|\beta|}}{2c_2}r)\der r.$$
The 1-form $X$  has
$$F=-\frac{|\beta|^{\frac{3}{2}}\mu\nu}{c_2}{\rm sec}^2(\tfrac{\mu\sqrt{|\beta|}}{2c_2}r)\tan(\tfrac{\mu\sqrt{|\beta|}}{2c_2}r)\der x\wedge\der r,$$
and the metric $g$ has the Gauss curvature:
$$K=-\frac{|\beta|\mu^2}{c_2^2}{\rm sec}^2(\tfrac{\mu\sqrt{|\beta|}}{2c_2}r),$$
so again it represents a non-trivial gradient soliton if $\nu=0$. 
In \cite{ricsol2}, it is called the exploding soliton and the metric is incomplete.

Summarising we have the following 
\begin{proposition}\label{cigar}
For steady case ($\la=0$), the 4-parameter family of Riemannian Ricci solitons $(c_1=0, c_2=-1)$ obtained in Proposition \ref{hcc}, specialise to either the complete Hamilton cigar given by
$$
\boxed{\begin{aligned}
g=&\der r^2+\beta \tanh^2(\tfrac{\mu\sqrt{\beta}}{2}r)\der x^2,\\
X=&\beta\nu \tanh^2(\tfrac{\mu\sqrt{\beta}}{2}r)\der x-\mu \sqrt{\beta} \tanh(\tfrac{\mu\sqrt{\beta}}{2}r)\der r,
\end{aligned}
}
$$
for $\beta >0$, 
or the incomplete `exploding' soliton given by
$$
\boxed{\begin{aligned}
g=&\der r^2+|\beta| \tan^2(\tfrac{\mu\sqrt{|\beta|}}{2}r)\der x^2,\\
X=&|\beta|\nu \tan^2(\tfrac{\mu\sqrt{|\beta|}}{2}r)\der x+\mu\sqrt{|\beta|} \tan(\tfrac{\mu\sqrt{|\beta|}}{2}r)\der r.
\end{aligned}
}
$$
for $\beta <0$.
 \end{proposition}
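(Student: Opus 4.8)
The plan is to read the statement off from Proposition \ref{hcc} together with the coordinate change already introduced in the Example above, by setting there $c_2=-1$ (so that we are in the genuine Ricci soliton case) and $\la=0$. With $\la=0$ the linear-in-$y$ term drops out, and since $-\mu/c_2=\mu$ for $c_2=-1$, the boxed pair of Proposition \ref{hcc} reduces to
\[
g=\big(\alpha\,{\rm e}^{\mu y}+\beta\big)\der x^2+\frac{1}{\alpha\,{\rm e}^{\mu y}+\beta}\der y^2,\qquad X=\nu\big(\alpha\,{\rm e}^{\mu y}+\beta\big)\der x+\mu\,\der y,
\]
which is a steady Ricci soliton for all $\alpha,\beta,\mu\neq0,\nu$; equivalently $A(y)=\alpha\,{\rm e}^{\mu y}+\beta$ is the general solution of the ODE (\ref{rssol}) at $c_2=-1$, $\la=0$.

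Next I would substitute $y=\tfrac{c_2}{\mu}\log\frac{\alpha}{\beta(\tanh^2(\frac{\mu\sqrt\beta}{2c_2}r)-1)}$ with $c_2=-1$. Exponentiating yields the algebraic identity $\alpha\,{\rm e}^{\mu y}+\beta=\beta\tanh^2(\tfrac{\mu\sqrt\beta}{2c_2}r)$, and differentiating $y(r)$ and using $1-\tanh^2={\rm sech}^2$ gives $\tfrac{\der y}{\der r}=\sqrt\beta\tanh(\tfrac{\mu\sqrt\beta}{2c_2}r)$. Hence $A\,\der x^2+A^{-1}\der y^2$ pulls back to $\der r^2+\beta\tanh^2(\tfrac{\mu\sqrt\beta}{2}r)\der x^2$ ($\tanh^2$ is even, so the sign of the argument is immaterial), and $X$ pulls back to $\beta\nu\tanh^2(\tfrac{\mu\sqrt\beta}{2}r)\der x-\mu\sqrt\beta\tanh(\tfrac{\mu\sqrt\beta}{2}r)\der r$, the minus sign arising from the oddness of $\tanh$ and $c_2=-1$. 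Because a diffeomorphism carries a solution of (\ref{grs0}) to a solution, the pulled-back pair is again a steady Ricci soliton, and the curvature formula $K=\tfrac{\beta\mu^2}{2c_2^2}{\rm sech}^2(\tfrac{\mu\sqrt\beta}{2}r)$ follows either from $K=-\tfrac12 A''$ or directly from Proposition \ref{hcc}.

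Finally I would split into the two signs of $\beta$. For $\beta>0$ the warping function $\beta\tanh^2(\tfrac{\mu\sqrt\beta}{2}r)$ is smooth and positive for $r\neq0$, vanishes to second order at $r=0$ like $\tfrac{\beta^2\mu^2}{4}r^2$, and converges to $\beta$ as $r\to\pm\infty$; thus $g$ extends to a smooth complete metric (radial lines are complete geodesics and the warping factor is bounded below on each set $|r|\geq\epsilon$) — Hamilton's cigar. For $\beta<0$, writing $\beta=-|\beta|$ and $\sqrt\beta=i\sqrt{|\beta|}$ and using $\tanh(i\theta)=i\tan\theta$, ${\rm sech}(i\theta)=\sec\theta$, the boxed formulae turn into the real expressions with $\tan$, $\sec$ replacing $\tanh$, ${\rm sech}$; here $\tan(\tfrac{\mu\sqrt{|\beta|}}{2}r)$ blows up at the finite value $r=\tfrac{\pi}{\mu\sqrt{|\beta|}}$, so the metric lives only on a bounded range of $r$ and is incomplete — the exploding soliton of \cite{ricsol2}.

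The only real subtlety is the bookkeeping in the coordinate change: picking the branch of the logarithm and the sign of $\alpha/\beta$ so its argument is positive, carrying $c_2=-1$ correctly through the $\sqrt\beta$'s and the hyperbolic arguments, and checking the identity that forces the $\der r^2$ coefficient to be exactly $1$. Once this is in place the soliton property transfers automatically by diffeomorphism-naturality of (\ref{grs0}), and the (in)completeness claims are elementary.
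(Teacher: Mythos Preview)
Your proposal is correct and follows essentially the same route as the paper: the paper's proof is the computation in the Example immediately preceding the Proposition, which performs the same change of variable $y\mapsto r$, obtains the $\tanh$-form (\ref{hcs9})--(\ref{hcs8}), and then sets $c_2=-1$ and splits on the sign of $\beta$, using $\sqrt{\beta}=i\sqrt{|\beta|}$ and $\tanh(i\theta)=i\tan\theta$ for $\beta<0$. The only cosmetic difference is that you specialise $c_2=-1$ in Proposition~\ref{hcc} before substituting, whereas the paper keeps $c_2$ generic through the Example and specialises at the end; you also spell out the (in)completeness more explicitly than the paper, which simply cites \cite{ricsol2} for the exploding case.
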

\end{example}

\subsection{Examples with nonvanishing quadratic term}
\subsubsection{An ansatz for the Riemannian case}
If the quadratic in $X$ term in (\ref{grs0}) does not vanish we may always put $c_1=1$. For the $\boxed{c_1=1}$ generalised Ricci soliton equations, we take the Riemannian metric ansatz (\ref{mansatz}) and the 1-form
\begin{equation}
X=A(y)p(y) \der x+q(x,y)\der y,\label{ansc1}
\end{equation} 
with $q=q(x,y)$ being a function of both variables $x$ and $y$, and $p=p(y)$ being a function of variable $y$, only. 

With this ansatz we solve the generalised Ricci soliton equation (\ref{grs0}) with $c_1=1$ in three steps. In step one we solve for $q=q(x,y)$ from the $\der x\der x$ component of this equation. This gives:
$$q(x,y)=\frac{2\lambda -2A(y)p(y)^2-c_2 A''(y)}{A'(y)}.$$
In particular, this means that the function $q(x,y)$ can not depend on $x$, and hence we have $q(x,y)=q(y)$. Now, in step two, inserting this function back in (\ref{ansc1}), and looking at the component $\der x\der y$ of (\ref{grs0}) for $X$ with such $q(x,y)$, enables us to solve for $A''(y)$. This can be only done when $$c_2 p(y)\neq 0.$$ In such case $A''(y)$ is given by:
$$A''(y)=\frac{4\lambda p(y)-4A(y)p(y)^3+A'(y)p'(y)}{2c_2 p(y)}.$$
In step three we use the information about $A''(y)$ and look at the last of equations (\ref{grs0}), the one for the component $\der y\der y$. This reduces to an ODE 
$$2p(y)p''(y)-3p'(y)^2+4p(y)^4=0$$
for $p(y)$ that can be solved easily. The general solution is:
$$p(y)=\frac{\gamma}{1+\gamma^2(y+\beta)^2},$$
where $\gamma\neq 0$ and $\beta$ are real constants. We now insert this $p(y)$ in the equation for $A''(y)$ obtaining an ODE for $A=A(y)$. This, when solved, gives us the general solution to the the generalised Ricci soliton equation with $c_1=1$ and $c_2\neq 0$. 

To make this general solution more transparent, it is convenient to pass to new variables $(x,y,\gamma)\to(x,z,\alpha)$, where $$z=\gamma(y+\beta),\quad\quad \gamma=\alpha^{-1}.$$
This brings the equation for $A=A(y)$, which now is considered as a function $A=A(z)$, into:
\begin{equation}c_2(1+z^2)^2A''(z)+z(1+z^2)A'(z)+2A(z)=2\la\alpha^2(1+z^2)^2.\label{legr}\end{equation}
Our aim now is to show that this, is closely related to the (associated) Legendre equation. 

Indeed, changing the independent variable $A=A(z)$ into a new variable $B=B(z)$, which relates to $A=A(z)$ via:
\begin{equation}A(z)=(1+z^2)^{\tfrac{2c_2-1}{4c_2}}B(z)+\frac{\la\alpha^2(1+z^2)}{1+c_2},\label{cvar}\end{equation}
we change (\ref{legr}) into the (associated) Legendre equation for $B=B(z)$ (see Remark \ref{legrem}). Of course, this is only possible when $\boxed{c_2\neq -1}$. In such case we arrive at the following proposition.
\begin{proposition}\label{ell}
If $c_2\neq 0,-1$ and $X\wedge\der z\neq 0$, the most general generalised $c_1=1$ Ricci soliton corresponding to the ansatz (\ref{mansatz}), (\ref{ansc1}) is given by:
$$\boxed{\begin{aligned}
&g=\Big((1+z^2)^{\tfrac{2c_2-1}{4c_2}}B(z)+\la\frac{\alpha^2(1+z^2)}{1+c_2}\Big)\der x^2+\frac{\alpha^2\der z^2}{(1+z^2)^{\tfrac{2c_2-1}{4c_2}}B(z)+\la\frac{\alpha^2(1+z^2)}{1+c_2}},\\
&X=\Big(\frac{(1+z^2)^{-\tfrac{1+2c_2}{4c_2}}}{\alpha}B(z)+\frac{\la\alpha}{1+c_2}\Big)\der x+\frac{z}{1+z^2}\der{z}.
\end{aligned}}$$
Here $\alpha\neq 0$ is a constant, and the function $B=B(z)$ satisfies an ODE:
\begin{equation}(1+z^2)B''(z)+2zB'(z)-\Big((\tfrac{1}{2c_2}-1)\tfrac{1}{2c_2}-\frac{(\tfrac{1}{2c_2}+1)^2}{1+z^2}\Big)B(z)=0.\label{legqe}\end{equation}
The Gauss curvature of the metric $K$ is:
$$K=\frac{(1+z^2)^{-\tfrac{1+2c_2}{4c_2}}}{2c_2\alpha^2}\Big(\big(1-\tfrac{1}{2c_2}+\frac{1+\tfrac{1}{2c_2}}{1+z^2}\big)B(z)+zB'(z)\Big)-\frac{\la}{1+c_2},$$
and the Maxwell form $F$ of the soliton is
$$F=\der X=\frac{(1+z^2)^{-\tfrac{1+6c_2}{4c_2}}}{\al}\Big((1+\tfrac{1}{2c_2})zB(z)-(1+z^2)B'(z)\Big)\der x\wedge\der z.$$
\end{proposition}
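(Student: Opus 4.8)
The plan is to impose the $c_1=1$ generalised Ricci soliton equation (\ref{grs0}) on the pair $(g,X)$ of (\ref{mansatz}), (\ref{ansc1}) and to read off its three independent scalar components, namely the coefficients of $\der x^2$, $\der x\,\der y$ and $\der y^2$ (no other components appear because $g$ is diagonal and $X$ has this particular form). First I would record the Christoffel symbols of $g=A(y)\der x^2+A(y)^{-1}\der y^2$ and write out $\nd_{(a}X_{b)}$ explicitly. The $\der x^2$-component is algebraic in $q$; since the $\der x$-coefficient of $X$ carries no $x$-dependence, it forces $q$ to be a function of $y$ alone, equal to $\dfrac{2\lambda-2A p^2-c_2A''}{A'}$. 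This is legitimate provided $A'\not\equiv0$, which — together with the hypothesis $X\wedge\der z\neq0$ (equivalently $p\not\equiv0$) and $c_2\neq0$ — makes every division in what follows valid. Knowing $q_x=0$, the $\der x\,\der y$-component reduces to $q=-\dfrac{p'}{2p}$; comparing the two expressions for $q$ then yields $A''=\dfrac{4\lambda p-4Ap^3+A'p'}{2c_2p}$, exactly as in the discussion preceding the statement.

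Next I would substitute these expressions for $q$, $q'$ and $A''$ into the $\der y^2$-component. The key point is that every occurrence of $A$ and $A'$ cancels, leaving the autonomous equation $2pp''-3(p')^2+4p^4=0$ for $p$ alone, which I would integrate in closed form to obtain $p(y)=\dfrac{\gamma}{1+\gamma^2(y+\beta)^2}$ with constants $\gamma\neq0$, $\beta$; being a two-parameter family, this is the general non-identically-zero solution. Feeding this $p$ back into the formula for $A''$ gives a linear inhomogeneous second-order ODE for $A$, and the change of variables $z=\gamma(y+\beta)$, $\alpha=\gamma^{-1}$ turns it into precisely (\ref{legr}). Up to this point the argument only fills in the computations already outlined before the statement.

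The substantive step is to verify that the substitution (\ref{cvar}), $A(z)=(1+z^2)^{\frac{2c_2-1}{4c_2}}B(z)+\dfrac{\lambda\alpha^2(1+z^2)}{1+c_2}$, converts (\ref{legr}) into the associated-Legendre form (\ref{legqe}). I would differentiate the product $(1+z^2)^{\frac{2c_2-1}{4c_2}}B$ twice, insert it into (\ref{legr}), clear the common power of $1+z^2$, and check term by term that the coefficients of $B''$, $B'$ and $B$ come out to be $1+z^2$, $2z$ and $-\bigl((\tfrac{1}{2c_2}-1)\tfrac{1}{2c_2}-\tfrac{(\tfrac{1}{2c_2}+1)^2}{1+z^2}\bigr)$; simultaneously one must check that the particular term $\dfrac{\lambda\alpha^2(1+z^2)}{1+c_2}$ exactly absorbs the inhomogeneous right-hand side $2\lambda\alpha^2(1+z^2)^2$ of (\ref{legr}), which is precisely where $c_2\neq-1$ enters. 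I expect this bookkeeping with the fractional exponent $\tfrac{2c_2-1}{4c_2}$ to be the only genuine obstacle: it is elementary but error-prone.

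Finally I would translate everything back. From $z=\gamma(y+\beta)$, $\alpha=\gamma^{-1}$ one has $p=\dfrac{1}{\alpha(1+z^2)}$ and $\der y=\alpha\,\der z$, so the $\der x$-coefficient $A(y)p(y)$ of $X$ becomes the stated expression once (\ref{cvar}) is substituted, while $q\,\der y=-\tfrac{p'}{2p}\,\der y$ collapses directly to $\tfrac{z}{1+z^2}\der z$; inserting (\ref{cvar}) into $g$ gives the boxed metric. The Gauss curvature then follows from $K=-\tfrac{1}{2}A''(y)=-\dfrac{1}{2\alpha^2}A''(z)$, computing $A''(z)$ by differentiating (\ref{cvar}) twice and reducing the resulting $B''$-term via (\ref{legqe}), and $F=\der X$ by differentiating the $\der x$-coefficient of $X$ with respect to $z$; verifying that these reproduce the boxed $K$ and $F$ is a direct substitution. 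The claim that this is the \emph{most general} soliton of the ansatz is then automatic, since at each stage the equations were solved with no freedom beyond integration constants.
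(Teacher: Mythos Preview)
Your proposal is correct and follows essentially the same approach as the paper: solve the $\der x^2$, $\der x\,\der y$, $\der y^2$ components in turn to obtain $q$, then $A''$, then the autonomous ODE for $p$; integrate $p$, change variables to $z$, and apply the substitution (\ref{cvar}) to reach the Legendre form. Your observation that the $\der x\,\der y$-component gives $q=-p'/(2p)$ directly (once $q_x=0$ is known) is a minor rephrasing of the paper's step two, but it leads to the same formula for $A''$ and the rest proceeds identically.
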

\begin{remark}\label{legrem}
As we already mentioned, the equation (\ref{legqe}), under the substitution
$$z=i \xi,\quad\quad \ell=\tfrac{1}{2c_2}-1,\quad\quad m=\tfrac{1}{2c_2}+1,$$
becomes:
$$\boxed{(1-\xi^2)B''(\xi)-2\xi B'(\xi)+\Big(\ell(\ell+1)-\frac{m^2}{1-\xi^2}\Big)B(\xi)=0,}$$
i.e. the (associated) Legendre equation in its standard form. The solutions to this equation are given as linear combinations of the associated Legendre functions $P^m_{\ell}(\xi)$ and $Q^m_{\ell}(\xi)$.
\end{remark}
We have the following corollary.
\begin{corollary}
Every generalised Ricci soliton with $c_1=1$ and $c_2\neq 0, -1$ corresponding to the ansatz (\ref{mansatz}), (\ref{ansc1}), and satisfying $X\wedge \der z\neq 0$,  is obtained by using Proposition \ref{ell}, with the function $B=B(z)$ given by:
\begin{equation}B(z)=\beta P^{\ell+2}_{\ell}(-iz)+\gamma Q^{\ell+2}_{\ell}(-iz),\label{exb}\end{equation}
where $P^m_{\ell}$ and $Q^m_{\ell}$, with $\ell=\tfrac{1}{2c_2}-1$, are the associated Legendre functions, and the complex constants $\beta$ and $\gamma$ are chosen in such a way that the expression (\ref{exb}) is real.  
\end{corollary}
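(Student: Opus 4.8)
The plan is to deduce the Corollary directly from Proposition \ref{ell} together with Remark \ref{legrem}. By Proposition \ref{ell}, under the stated hypotheses ($c_2\neq 0,-1$ and $X\wedge\der z\neq 0$) any generalised Ricci soliton built from the ansatz (\ref{mansatz}), (\ref{ansc1}) is, after the passage to the variables $(x,z,\al)$ and the substitution (\ref{cvar}), completely encoded by a single function $B=B(z)$ that must solve the linear second order ODE (\ref{legqe}); conversely every solution of (\ref{legqe}) produces, via the boxed formulae of Proposition \ref{ell}, such a soliton. Hence it suffices to write down the general solution of (\ref{legqe}) explicitly and then impose reality.

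First I would apply the substitution recorded in Remark \ref{legrem}, namely $z=i\xi$ together with $\ell=\tfrac{1}{2c_2}-1$ and $m=\tfrac{1}{2c_2}+1$. A direct check — really just tracking how the pieces $(1+z^2)$, $B''$, $B'$ and the coefficient $\big((\tfrac{1}{2c_2}-1)\tfrac{1}{2c_2}-\tfrac{(\tfrac{1}{2c_2}+1)^2}{1+z^2}\big)$ transform — turns (\ref{legqe}) into the standard associated Legendre equation
\[
(1-\xi^2)B''(\xi)-2\xi B'(\xi)+\Big(\ell(\ell+1)-\frac{m^2}{1-\xi^2}\Big)B(\xi)=0.
\]
Since $m=\tfrac{1}{2c_2}+1=\ell+2$, two linearly independent solutions are the associated Legendre functions $P^{\ell+2}_{\ell}(\xi)$ and $Q^{\ell+2}_{\ell}(\xi)$, so the general solution is $B(\xi)=\be P^{\ell+2}_{\ell}(\xi)+\ga Q^{\ell+2}_{\ell}(\xi)$ with constants $\be,\ga$. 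Undoing $\xi=-iz$ yields $B(z)=\be P^{\ell+2}_{\ell}(-iz)+\ga Q^{\ell+2}_{\ell}(-iz)$, which is exactly formula (\ref{exb}); feeding this back through Proposition \ref{ell} gives the soliton.

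The only genuine subtlety — and the step I expect to need the most care — is the reality requirement: the metric $g$ and the $1$-form $X$ of Proposition \ref{ell} are real, so among the complex two dimensional solution space of the Legendre equation one must single out the real ones. Since (\ref{legqe}) has real coefficients in the variable $z$, its real solutions form a two dimensional real vector space, and every such solution is of the form (\ref{exb}) for a suitable choice of complex constants $\be,\ga$ (e.g. chosen so that $\be P^{\ell+2}_{\ell}(-iz)+\ga Q^{\ell+2}_{\ell}(-iz)$ coincides with its complex conjugate); this uses only that $P^{\ell+2}_{\ell}(-iz)$ and $Q^{\ell+2}_{\ell}(-iz)$ are linearly independent over $\C$. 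The latter fails precisely at the classical exceptional values — here when $\ell=\tfrac{1}{2c_2}-1$ is a non-negative integer, where $P^{\ell+2}_{\ell}\equiv 0$ because the order exceeds the degree — and there one simply replaces the vanishing solution by the standard companion second solution of the Legendre equation; this does not affect the generic statement. Finally, the hypothesis $X\wedge\der z\neq 0$ carried over from Proposition \ref{ell} merely discards the degenerate sub-locus on which $X$ is proportional to $\der z$, so on the complement the construction applies verbatim, completing the proof.
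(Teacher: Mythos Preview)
Your proposal is correct and follows exactly the route the paper takes: the Corollary is stated in the paper without an explicit proof, as it is an immediate consequence of Proposition \ref{ell} combined with Remark \ref{legrem}, which is precisely what you do. Your added discussion of the reality constraint and the exceptional integer values of $\ell$ goes slightly beyond what the paper spells out, but this only makes the argument more complete.
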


The associated Legendre functions arise in the study of spherical harmonics. As we see below, for some specific values of the parameter $c_2$, including the important case of $c_2=\tfrac12$, the function $B=B(z)$ can be expressed in terms of the elementary functions.  For the case of the generalised Ricci soliton equations with $c_2=-\frac{1}{N}$ where $N$ is some integer (related to the dimension $n$), such as the EW equation, or the equation determining metric projective structures with skew-symmetric Ricci tensor representative in its projective class, we have $\ell$ and $m$ taking integer and half-integer values. Again in these cases the associated Legendre functions in (\ref{exb}) reduce to elementary functions (and in some cases polynomials).

In the singular case $\boxed{c_2=-1}$, not much is changing when passing from (\ref{legr}) to the (almost) Legendre equation (\ref{legqe}). Simply, instead of replacing $A(z)$ with $B(z)$ via (\ref{cvar}), we make the change:
$$A(z)=(1+z^2)^{\tfrac{2c_2-1}{4c_2}}B(z)-\la\alpha^2z\sqrt{1+z^2}~{\rm arcsinh}z,$$
or, which is the same,
\begin{equation}A(z)=(1+z^2)^{\tfrac34}~B(z)-\la\alpha^2z\sqrt{1+z^2}~{\rm arcsinh}z.\label{cvarm1}\end{equation}
This brings the equation (\ref{legr}) for $A(z)$ with $c_2=-1$ into the equation for $B(z)$, which is the (almost) Legendre equation (\ref{legqe}). This for $c_2=-1$ becomes the (almost) associated Legendre equation
$$(1+z^2)B''(z)+2zB'(z)-\Big((\tfrac32\times\tfrac12-\frac{\big(\tfrac12\big)^2}{1+z^2}\Big)B(z)=0,$$
or the exact associated Legendre equation 
\begin{equation}(1-\xi^2)B''(\xi)-2\xi B'(\xi)+\Big(\ell(\ell+1)-\frac{(\ell+2)^2}{1-\xi^2}\Big)B(\xi)=0,\label{3212}\end{equation}
with $\ell=-\tfrac32$ (and $m=-\tfrac12$) for the variable $z=i\xi$.

This time, the coincidences in particular values of $\ell=-\tfrac32$ and $m=\ell+2=\tfrac12$, make the general solution to the associated Legendre equation (\ref{3212}), to be expressible in terms of elementary functions. Actually we have that 
$$B(\xi)=\tilde{\beta} (\xi^2 -1)^{\tfrac14}+\tilde{\gamma} \frac{\xi}{(\xi^2-1)^{\tfrac14}},$$
with complex constants $\tilde{\beta}$ and $\tilde{\gamma}$, 
is the general solution to 
$$(1-\xi^2)B''(\xi)-2\xi B'(\xi)+\Big(\tfrac34-\frac{\tfrac14}{1-\xi^2}\Big)B(\xi)=0.$$
Returning to the real variable $z$, using (\ref{cvarm1}), we conclude that this implies that the most general real solution for $A(z)$ of (\ref{legr}) with $c_2=-1$ is:
$$
A(z)=\beta(1+z^2)+\gamma z \sqrt{1+z^2}-\la \al^2 z \sqrt{1+z^2} ~{\rm arcsinh}z,
$$
with real constants $\beta$ and $\gamma$,
from which we get

\begin{proposition}\label{singular1}
If $c_2= -1$, the most general generalised $c_1=1$ Ricci soliton corresponding to the ansatz (\ref{mansatz}), (\ref{ansc1}) is given by:
$$\boxed{\begin{aligned}
&g=\Big(\beta(1+z^2)+\gamma z \sqrt{1+z^2}-\la \al^2 z \sqrt{1+z^2}~{\rm arcsinh}z\Big)\der x^2
\\
&\hspace{50pt}+\frac{\alpha^2}{\beta(1+z^2)+\gamma z \sqrt{1+z^2}-\la \al^2 z \sqrt{1+z^2}~{\rm arcsinh}z}\der z^2,\\
&X=\Big(\frac{\beta}{\alpha}+\frac{\gamma z}{\al \sqrt{1+z^2}}-\frac{\la \al z }{\sqrt{1+z^2}}~{\rm arcsinh}z\Big)\der x+\frac{z}{1+z^2}\der{z},
\end{aligned}}$$
where $\alpha\neq 0$ is a constant.
In this case, the 2-form $F$ is
$$F=\der X=\frac{1}{\al(1+z^2)^{\tfrac32}}\Big(\la \al^2 z \sqrt{1+z^2}+\la \al^2~{\rm arcsinh}z-\gamma\Big)\der x\wedge\der z.$$
\end{proposition}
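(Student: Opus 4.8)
The plan is to rerun, almost verbatim, the three-step reduction that established Proposition~\ref{ell}, and then to handle the value $c_2=-1$, which was excluded there, by a different change of dependent variable in the final step. Imposing the generalised Ricci soliton equation~(\ref{grs0}) with $c_1=1$, $c_2=-1$ on the pair $(g,X)$, with $g$ as in~(\ref{mansatz}) and $X=A(y)p(y)\,\der x+q(x,y)\,\der y$ as in~(\ref{ansc1}), the $\der x\,\der x$ component is purely algebraic in $q$ and forces
\[
q(x,y)=\frac{2\la-2A(y)p(y)^2+A''(y)}{A'(y)},
\]
so $q$ is in fact a function of $y$ alone. Substituting this back, the $\der x\,\der y$ component produces a second expression for $q$; comparing the two yields a formula for $A''(y)$ (this step uses $c_2\,p(y)=-p(y)\neq0$), after which the remaining $\der y\,\der y$ component collapses, exactly as in the general-$c_2$ case, to the autonomous equation $2p\,p''-3(p')^2+4p^4=0$, whose general solution is $p(y)=\ga/\bigl(1+\ga^2(y+\be)^2\bigr)$.

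Passing to the variables $z=\ga(y+\be)$, $\ga=\al^{-1}$, the equation left for $A$ is exactly~(\ref{legr}) specialised to $c_2=-1$, namely
\[
-(1+z^2)^2A''(z)+z(1+z^2)A'(z)+2A(z)=2\la\al^2(1+z^2)^2.
\]
The crux is to recognise that the substitution~(\ref{cvarm1}),
\[
A(z)=(1+z^2)^{3/4}B(z)-\la\al^2\,z\sqrt{1+z^2}\,{\rm arcsinh}\,z,
\]
accomplishes two things at once. First, the summand $-\la\al^2\,z\sqrt{1+z^2}\,{\rm arcsinh}\,z$ is a particular solution of the inhomogeneous equation: for $c_2\neq-1$ the polynomial particular solution $\la\al^2(1+z^2)/(1+c_2)$ built into~(\ref{cvar}) does the job, but it blows up precisely at $c_2=-1$, and this is why the $\mathrm{arcsinh}$ must enter. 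Second, the prefactor $(1+z^2)^{3/4}$ conjugates the homogeneous part of the operator into the ``almost'' associated Legendre operator, producing~(\ref{3212}) with $\ell=-\tfrac32$ after the complex change $z=i\xi$.

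Because these are exceptional parameter values, that associated Legendre equation has an elementary general solution, $B(\xi)=\tilde\be\,(\xi^2-1)^{1/4}+\tilde\ga\,\xi(\xi^2-1)^{-1/4}$; choosing the complex constants $\tilde\be,\tilde\ga$ so that $B$ is real and transporting back to the real variable $z$ gives $A(z)=\be(1+z^2)+\ga\,z\sqrt{1+z^2}-\la\al^2\,z\sqrt{1+z^2}\,{\rm arcsinh}\,z$ with $\be,\ga$ real. Feeding this $A$, together with $p$ and $q$ rewritten in the variable $z$, into~(\ref{mansatz}) and~(\ref{ansc1}) reads off the boxed formulas for $g$ and $X$, and a direct computation of $\der X$ produces the stated $F$. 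I expect the only genuine work to be bookkeeping: verifying that $-\la\al^2\,z\sqrt{1+z^2}\,{\rm arcsinh}\,z$ really does reproduce the right-hand side $2\la\al^2(1+z^2)^2$ (a short differentiation in which the $\mathrm{arcsinh}$ contributions must cancel identically), confirming that $(1+z^2)^{3/4}$ is the correct conjugating factor, and checking that the constants can be taken real. A final recomputation of the Gauss curvature $K$ and of $\nd_{(a}X_{b)}+X_aX_b-(c_2K+\la)g_{ab}$ then serves as a consistency check that closes the argument.
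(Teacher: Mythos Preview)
Your proposal is correct and follows essentially the same route as the paper: rerun the three-step reduction leading to~(\ref{legr}), then at $c_2=-1$ replace the blown-up polynomial particular solution of~(\ref{cvar}) by the $\mathrm{arcsinh}$ term in~(\ref{cvarm1}), reduce to the associated Legendre equation with $\ell=-\tfrac32$, read off its elementary general solution, and transport back. Your explanation of \emph{why} the $\mathrm{arcsinh}$ must enter (the particular solution $\la\al^2(1+z^2)/(1+c_2)$ degenerates exactly at $c_2=-1$) is a nice gloss the paper leaves implicit.
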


For $\boxed{c_2=0}$ the ODE (\ref{legr}) reduces to a first order ODE, whose general solutions are:
\[
A(z)=\be \frac{1+z^2}{z^2}+\la \al^2 (1+z^2)
\]
with an arbitrary constant $\beta$.
In this case we have 
$$\boxed{\begin{aligned}
&g=(\la \al^2+\tfrac{\be}{z^2})(1+z^2)\der x^2+\frac{\al^2\der z^2}{(\la \al^2+\tfrac{\be}{z^2})(1+z^2)},\\
&X=\frac{1}{\al}(\la \al^2+\tfrac{\be}{z^2})\der x+\frac{z\der z}{1+z^2}.
\end{aligned}}$$

The transformation to $B(z)$ is not possible. Note that this solution is not well-defined on the set $\{z=0\}$.

For $\boxed{p(y)=0}$, the solutions obtained are gradient-like.

In this case we find that the $\der x \der x$ component of the $c_1=1$ generalised Ricci soliton equations with metric given by (\ref{mansatz}) and 1-form given  by 
\[
X=q(y) \der y,
\]
gives 
\[
A''(y)=\frac{2\la-q(y) A'(y)}{c_2}. 
\]
From this, substituting this expression into the $\der y \der y$ component we get
\[
q(y)^2+q'(y)=0, 
\]
whose general solutions are
$q(y)=\frac{1}{y+\beta}$.

Redefining coordinates by introducing $z=y+\beta$, we conclude that in this case the metric can be written as: $
g=A(z)\der x^2+\frac{1}{A(z)}\der z^2$,
and the 1-form $X$ as $X=\frac{\der z}{z}$.
The only equation to be satisfied for this pair to be 
the $c_1=1$ generalised Ricci soliton is an ODE
\begin{equation}\label{c11ode}
c_2 z A''(z)+A'(z)-2\la z=0
\end{equation}
for the function $A=A(z)$.

General solution to (\ref{c11ode}) depends on whether or not $c_2(c_2^2-1)$ is zero. In any case the solution is always expressible in terms of elementary functions. The corresponding solitons have
$$\boxed{X=\frac{\der z}{z}}$$
and
$$\begin{aligned}&\boxed{g=\Big( \frac{\alpha c_2 z^{{\frac{c_2-1}{c_2}}}}{c_2-1}+\frac{\la z^{2}}{c_2+1}+\beta\Big)\der x^2+\frac{\der z^2}{ \frac{\alpha c_2 z^{{\frac{c_2-1}{c_2}}}}{c_2-1}+\frac{\la z^{2}}{c_2+1}+\beta}\quad{\rm if}\quad c_2(c_2^2- 1)\neq 0,}\\
&\boxed{g=\Big(\la z^{2}+\beta\Big)\der x^2+\frac{\der z^2}{\la z^{2}+\beta}\quad{\rm if}\quad c_2= 0,}\\
&\boxed{g=\Big(\al \ln z+\beta+\frac{\la}{2}z^2\Big)\der x^2+\frac{\der z^2}{\al \ln z+\beta+\frac{\la}{2}z^2}\quad{\rm if}\quad c_2=1,}\\
&\boxed{g=\Big(\frac{\al}{2}z^2 +\beta+\frac{\la}{2}z^2-\la z^2 \ln z\Big)\der x^2+\frac{\der z^2}{\frac{\al}{2}z^2 +\beta+\frac{\la}{2}z^2-\la z^2 \ln z}\quad{\rm if}\quad c_2=-1,}\end{aligned}$$
with $\al$ and $\beta$ being arbitrary constants.

\subsubsection{An ansatz for the Lorentzian non-null case}
For the Lorentzian ansatz 
\begin{eqnarray}
&g=A(y)\der x^2-\frac{1}{A(y)}\der y^2,\nonumber\\
&X=A(y) p(y) \der x+q(y)\der y,\label{xlor}
\end{eqnarray}
we proceed in the same way as we did in the Riemannian case. In particular we obtain an ODE for $p(y)$ as before. In the case $c_2 p(y) \neq 0$, the ODE is 
\[
4 p(y)^4-2 p(y) p''(y)+3 p(y)^2=0,
\]
and has 
\[
p(y)=\frac{\gamma}{\ga(y+\beta)^2-1} 
\]
as a general solution.

Making the change of variables $(x,y,\gamma) \mapsto (x,z,\alpha)$ via $z=\gamma(y+\beta)$, $\alpha=\ga^{-1}$ as before,  we reduce all the generalised Ricci soliton equations for our ansatz to a single ODE for $A(z)$ given by
\begin{equation}c_2(1-z^2)^2A''(z)-z(1-z^2)A'(z)-2A(z)=-2\la\alpha^2(1-z^2)^2.\label{leglor}\end{equation}

Restricting now to the case when $c_2\neq 0,-1$, we again 
change the independent variable $A=A(z)$ into a new variable $B=B(z)$, which relates to $A=A(z)$ via:
\begin{equation}A(z)=(1-z^2)^{\tfrac{2c_2-1}{4c_2}}B(z)+\frac{\la\alpha^2(1-z^2)}{1+c_2}.\label{cvaru}\end{equation}
This changes (\ref{leglor}) into the (associated) Legendre equation for $B=B(z)$. In such case we arrive at the following proposition.
\begin{proposition}\label{ellor}
If $c_2\neq 0,-1$ and $X\wedge\der z\neq 0$, the most general generalised $c_1=1$ Ricci soliton corresponding to the ansatz (\ref{lool}), (\ref{xlor}) is given by:
$$\boxed{\begin{aligned}
&g=\Big((1-z^2)^{\tfrac{2c_2-1}{4c_2}}B(z)+\la\frac{\alpha^2(1-z^2)}{1+c_2}\Big)\der x^2-\frac{\alpha^2\der z^2}{(1-z^2)^{\tfrac{2c_2-1}{4c_2}}B(z)+\la\frac{\alpha^2(1-z^2)}{1+c_2}},\\
&X=-\Big(\frac{(1-z^2)^{-\tfrac{1+2c_2}{4c_2}}}{\alpha}B(z)+\frac{\la\alpha}{1+c_2}\Big)\der x-\frac{z}{1-z^2}\der{z}.
\end{aligned}}$$
Here $\alpha\neq 0$ is a constant, and the function $B=B(z)$ satisfies the associated Legendre equation:
\begin{equation}(1-z^2)B''(z)-2zB'(z)+\Big(\ell(\ell+1)-\frac{(\ell+2)^2}{1-z^2}\Big)B(z)=0,\label{legqelo}\end{equation}
with $\ell=\tfrac{1}{2c_2}-1$.
The Gauss curvature of the metric $K$ is:
$$K=\frac{(1-z^2)^{-\tfrac{1+2c_2}{4c_2}}}{2c_2\alpha^2}\Big(\big(1-\tfrac{1}{2c_2}+\frac{1+\tfrac{1}{2c_2}}{1-z^2}\big)B(z)+zB'(z)\Big)-\frac{\la}{1+c_2},$$
and the Maxwell form $F$ of the soliton is
$$F=\der X=\frac{(1-z^2)^{-\tfrac{1+6c_2}{4c_2}}}{\al}\Big((1+\tfrac{1}{2c_2})zB(z)+(1-z^2)B'(z)\Big)\der x\wedge\der z.$$
\end{proposition}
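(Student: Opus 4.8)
The plan is to follow verbatim the three-step strategy already used to establish Proposition~\ref{ell}, the only differences being the signs inherited from the Lorentzian metric (\ref{lool}) and from the relation $K=\tfrac12 A''(y)$.

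\textbf{Step 1: reduce to ODEs.} First I would substitute the ansatz (\ref{xlor}) into the $c_1=1$ generalised Ricci soliton equation (\ref{grs0}) and read off its three independent components in the coframe $(\der x,\der y)$. The $\der x\der x$-component is algebraic and determines $q$ explicitly in terms of $A,A',p,\la,c_2$; crucially it forces $q$ to be independent of $x$, which is what makes the ansatz self-consistent. Feeding this $q$ back into the $\der x\der y$-component and solving for $A''(y)$ — legitimate exactly when $c_2 p(y)\neq 0$ — expresses $A''$ through $A,A',p,p'$. Substituting both relations into the remaining $\der y\der y$-component collapses everything to a single autonomous second-order ODE for $p$, whose general solution is $p(y)=\gamma/(\gamma(y+\beta)^2-1)$ with constants $\gamma\neq 0$ and $\beta$.

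\textbf{Step 2: normalise and identify the Legendre equation.} Next I would insert this $p$ into the relation for $A''$ and pass to the normalised variable $z=\gamma(y+\beta)$, $\alpha=\gamma^{-1}$, which turns the whole system into the single linear inhomogeneous ODE (\ref{leglor}) for $A=A(z)$. For $c_2\neq -1$ the particular solution $\la\alpha^2(1-z^2)/(1+c_2)$ removes the inhomogeneity, so the substitution (\ref{cvaru}) is well-defined; a direct computation then shows that the homogeneous part is carried into the associated Legendre equation (\ref{legqelo}) with $\ell=\tfrac{1}{2c_2}-1$ (equivalently $m=\ell+2=\tfrac{1}{2c_2}+1$), after the rotation $z\mapsto i\xi$ exactly as recorded in Remark~\ref{legrem}. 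Re-expressing $g$ and $X$ through $B$ then yields the boxed formulas.

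\textbf{Step 3: curvature and Maxwell form.} Finally, the stated expression for the Gauss curvature follows from $K=\tfrac12 A''$ together with the boxed form of $A(z)$, while $F=\der X$ is obtained by differentiating the boxed $X$; in both cases (\ref{legqelo}) is used to eliminate $B''$ and tidy the result.

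The only genuine work is bookkeeping: carefully tracking the sign differences between the Lorentzian ansatz (\ref{lool}),(\ref{xlor}) and the Riemannian one — these are what replace $(1+z^2)$ by $(1-z^2)$ throughout and flip assorted signs in the $p$-ODE and in the final formulas — and verifying that (\ref{cvaru}) really does convert (\ref{leglor}) into (\ref{legqelo}) with the claimed value of $\ell$. None of this is conceptually hard; the main obstacle is simply keeping the lengthy but elementary computations error-free, the whole argument being strictly parallel to the Riemannian Proposition~\ref{ell}.
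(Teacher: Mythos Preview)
Your proposal is correct and follows essentially the same route as the paper: the text preceding Proposition~\ref{ellor} explicitly says one proceeds exactly as in the Riemannian case, derives the ODE for $p(y)$, changes variables to $z$ to obtain (\ref{leglor}), and then applies the substitution (\ref{cvaru}) to land on the Legendre equation (\ref{legqelo}). One small slip: in the Lorentzian setting (\ref{legqelo}) is already the associated Legendre equation in standard form with real argument $z$, so no rotation $z\mapsto i\xi$ is required here---that step was specific to the Riemannian equation (\ref{legqe}).
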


\begin{remark}
Again if we refer back to Section \ref{potential}, the 1-form $\Phi_a$ related to the $X_a$ from Proposition \ref{ellor} via $\Phi_a=\frac{X_a}{X_bX^b}$, satisfies the Poisson condition given by first two equations of Proposition \ref{Poisson}. The ODE (\ref{leglor}) is the remaining trace equation in Proposition \ref{Poisson}.
\end{remark} 
We have the following:
\begin{corollary}
Every Lorentzian generalised Ricci soliton with $c_1=1$ and $c_2\neq 0, -1$ corresponding to the ansatz (\ref{lool}), (\ref{xlor}) is obtained by using Proposition \ref{ellor}, with the function $B=B(z)$ given by:
\begin{equation}B(z)=\beta P^{\ell+2}_{\ell}(z)+\gamma Q^{\ell+2}_{\ell}(z),\label{exc}\end{equation}
where $\ell=\tfrac{1}{2c_2}-1$ and the functions $P^m_{\ell}$ and $Q^m_{\ell}$ are the associated Legendre functions. Note that contrary to the Riemannian case, now the constants $\beta$ and $\gamma$ parametrising the solutions are real.  
\end{corollary}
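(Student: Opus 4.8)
The plan is to obtain the statement as a direct consequence of Proposition \ref{ellor}. That proposition already carries out the substantive work: it shows that, under the ansatz (\ref{lool}), (\ref{xlor}) with $X\wedge\der z\neq 0$, every Lorentzian generalised Ricci soliton with $c_1=1$ and $c_2\neq 0,-1$ arises from a solution $B=B(z)$ of the ODE (\ref{legqelo}), and conversely that each such $B$ yields a soliton via the boxed formulas. Hence all that is left is to integrate (\ref{legqelo}) explicitly and to observe the reality claim.

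First I would recognise (\ref{legqelo}) as the associated Legendre equation
\[
(1-z^2)B''(z)-2zB'(z)+\Big(\ell(\ell+1)-\frac{m^2}{1-z^2}\Big)B(z)=0
\]
in the \emph{real} variable $z$, with degree $\ell=\tfrac{1}{2c_2}-1$ and with the order fixed by $m^2=(\ell+2)^2$, i.e.\ $m=\ell+2=\tfrac{1}{2c_2}+1$ (the overall sign of $m$ is irrelevant, since the equation depends on $m$ only through $m^2$, and one simply picks a convenient branch). The coefficient of $B''$ vanishes only at $z=\pm1$; but these are precisely the points excluded by nondegeneracy of the metric (\ref{lool}), since there $A$ acquires a factor $(1-z^2)$ and the quadratic $(1-z^2)^2$ entered the reduction leading to (\ref{leglor}). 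Thus on each interval of definition the equation is regular, its solution space is two-dimensional, and a basis is given by the associated Legendre functions $P^{\ell+2}_\ell(z)$ and $Q^{\ell+2}_\ell(z)$; this is (\ref{exc}). Substituting $B(z)=\beta P^{\ell+2}_\ell(z)+\gamma Q^{\ell+2}_\ell(z)$ back into Proposition \ref{ellor} produces the corresponding soliton, and conversely.

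For the reality statement I would contrast this with the Riemannian case. There, as in Remark \ref{legrem}, the rotation $z=i\xi$ was required to bring (\ref{legqe}) to standard Legendre form, so the Legendre functions were evaluated at imaginary argument and $\beta,\gamma$ had to be tuned (complex conjugate) to keep $B$ real. In the Lorentzian case no such rotation occurs: the argument in (\ref{legqelo}) is already the real variable $z$, which ranges over a real interval (inside $(-1,1)$ or inside $(1,\infty)$ according to the sign of $1-z^2$), and on such an interval $P^{\ell+2}_\ell$ and $Q^{\ell+2}_\ell$ are real-valued. Hence the parameters $\beta$ and $\gamma$ may be taken real, as asserted.

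\textbf{Main obstacle.} There is no deep difficulty here — the step is the identification of a classical equation — so the only care needed is bookkeeping with conventions for the associated Legendre functions: the choice $m=\ell+2$ versus $m=-(\ell+2)$, the distinction between the branch on the cut $|z|<1$ and the branch on $|z|>1$, and the handful of exceptional pairs $(\ell,m)$ at which $P^{\ell+2}_\ell$ and $Q^{\ell+2}_\ell$ become linearly dependent or one of them degenerates. For generic $c_2$ these degeneracies do not occur, so the stated pair is a genuine basis; for the special rational values of $c_2$ relevant to the applications (notably $c_2=\tfrac12$ and $c_2=-\tfrac1N$) the Legendre functions reduce further to elementary functions, consistently with the explicit boxed solutions obtained elsewhere in this section.
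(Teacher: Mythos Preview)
Your proposal is correct and matches the paper's approach: the corollary is immediate from Proposition \ref{ellor}, since (\ref{legqelo}) is already written in the standard form of the associated Legendre equation with $\ell=\tfrac{1}{2c_2}-1$ and $m=\ell+2$, so its general solution is the two-parameter family (\ref{exc}); the paper offers no separate proof beyond this identification. Your remarks on reality (no $z=i\xi$ rotation needed, in contrast to Remark \ref{legrem}) and on the bookkeeping caveats are apt and, if anything, more explicit than the paper itself.
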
 
 
In the singular case $c_2=-1$, we change from $A(z)$ to $B(z)$ via:
$$A(z)=(1-z^2)^{\tfrac{2c_2-1}{4c_2}}B(z)+\la\alpha^2z\sqrt{1-z^2}~{\rm arcsin}z,$$
or, which is the same,
$$A(z)=(1-z^2)^{\tfrac34}~B(z)+\la\alpha^2z\sqrt{1-z^2}~{\rm arcsin}z.$$
This brings the equation (\ref{leglor}) for $A(z)$ with $c_2=-1$ into the equation for $B(z)$, which is the associate Legendre equation (\ref{legqelo}) with $\ell=-\tfrac32$. Using its general solution 
$$B(z)=\beta (1-z^2)^{\tfrac14}+\gamma \frac{z}{(1-z^2)^{\tfrac14}},$$
with real constants $\beta$ and $\gamma$, 
we get the most general solution for $A(z)$ of (\ref{leglor}) with $c_2=-1$:
$$
A(z)=\beta(1-z^2)+\gamma z \sqrt{1-z^2}+\la \al^2 z \sqrt{1-z^2} ~{\rm arcsin}z,
$$
from which we get
\begin{proposition}\label{singular2}
If $c_2= -1$, the most general generalised $c_1=1$ Ricci soliton corresponding to the ansatz (\ref{lool}), (\ref{xlor}) is given by:
$$\boxed{\begin{aligned}
&g=\Big(\beta(1-z^2)+\gamma z \sqrt{1-z^2}+\la \al^2 z \sqrt{1-z^2}~{\rm arcsin}z\Big)\der x^2
\\
&\hspace{50pt}-\frac{\alpha^2}{\beta(1-z^2)+\gamma z \sqrt{1-z^2}+\la \al^2 z \sqrt{1-z^2}~{\rm arcsin}z}\der z^2,\\
&X=-\Big(\frac{\beta}{\alpha}+\frac{\gamma z}{\al \sqrt{1-z^2}}+\frac{\la \al z }{\sqrt{1-z^2}}~{\rm arcsin}z\Big)\der x-\frac{z}{1-z^2}\der{z},
\end{aligned}}$$
where $\alpha\neq 0$ is a constant.
In this case, the 2-form $F$ is
$$F=\der X=\frac{1}{\al(1-z^2)^{\tfrac32}}\Big(\la \al^2 z \sqrt{1-z^2}+\la \al^2~{\rm arcsin}z+\gamma\Big)\der x\wedge\der z.$$
\end{proposition}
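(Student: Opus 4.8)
The plan is to build on the reduction already carried out for the Lorentzian non-null ansatz (\ref{lool}), (\ref{xlor}) in the paragraphs leading to Proposition~\ref{ellor}: the $\der x\der x$ component of the generalised Ricci soliton equation fixes $q$, the $\der x\der y$ component then fixes $A''(y)$ in terms of $p$ (this uses $c_2p(y)\neq0$), and the $\der y\der y$ component forces $p$ to satisfy the quoted ODE, so that $p$ is pinned down up to constants and, after the substitution $z=\ga(y+\be)$, $\al=\ga^{-1}$, becomes $p=-\frac{1}{\al(1-z^2)}$; moreover the entire system then collapses to the single linear second order ODE (\ref{leglor}) for $A=A(z)$. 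Hence everything reduces to solving (\ref{leglor}) with $c_2=-1$, i.e.
\[
-(1-z^2)^2A''(z)-z(1-z^2)A'(z)-2A(z)=-2\la\al^2(1-z^2)^2,
\]
and then reading off $g$, $X$ and $F$ from the ansatz.

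First I would dispose of the inhomogeneous term. For $c_2\neq-1$ the particular solution $\frac{\la\al^2(1-z^2)}{1+c_2}$ used in (\ref{cvaru}) is available, but it acquires a pole at $c_2=-1$, so a different particular solution is needed. I would check by direct substitution that $A_p(z)=\la\al^2z\sqrt{1-z^2}~{\rm arcsin}z$ solves the $c_2=-1$ equation; this is the limit of that family of particular solutions as $c_2\to-1$, which is precisely why ${\rm arcsin}$ enters the statement. Equivalently, inserting the full substitution $A(z)=(1-z^2)^{3/4}B(z)+\la\al^2z\sqrt{1-z^2}~{\rm arcsin}z$ — which is (\ref{cvaru}) with $c_2=-1$, the singular inhomogeneous piece replaced by $A_p$ — into (\ref{leglor}) with $c_2=-1$ turns it, after a direct (if tedious) computation, into exactly the associated Legendre equation (\ref{legqelo}) with $\ell=-\tfrac32$, namely $(1-z^2)B''-2zB'+\big(\tfrac34-\tfrac{1/4}{1-z^2}\big)B=0$.

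Next I would solve this Legendre equation. Since $\ell=-\tfrac32$ and $m=\ell+2=\tfrac12$, the associated Legendre functions degenerate to elementary ones, and rather than invoke general formulae I would simply exhibit $B_1=(1-z^2)^{1/4}$ and $B_2=z(1-z^2)^{-1/4}$, verify by substitution that each solves the equation, and observe that they are linearly independent (their Wronskian is a nonzero multiple of $(1-z^2)^{-1}$). Therefore the general homogeneous solution for $A$ is $(1-z^2)^{3/4}(\be B_1+\ga B_2)=\be(1-z^2)+\ga z\sqrt{1-z^2}$, and adding $A_p$ yields the general $A(z)$ displayed in the proposition.

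Finally I would substitute this $A(z)$ into $g=A\,\der x^2-A^{-1}\,\der y^2$ and $X=Ap\,\der x+q\,\der y$, using $p=-\frac{1}{\al(1-z^2)}$ and $\der y=\al\,\der z$ in the new coordinates, to obtain the boxed formulas for $g$ and $X$; the stated $F=\der X$ then follows by differentiating the $\der x$-component $Ap$ of $X$ in $z$ and using $\frac{\der}{\der z}{\rm arcsin}z=(1-z^2)^{-1/2}$. The one genuinely non-mechanical point is spotting the right particular solution once $c_2=-1$ kills the generic one; the rest is bookkeeping inherited from the derivation preceding Proposition~\ref{ellor}, which also guarantees that solving the reduced ODE is not merely necessary but sufficient for the pair $(g,X)$ to satisfy all components of the generalised Ricci soliton equation. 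I would also note, in parallel with the Riemannian discussion, that the branch $p\equiv0$ (which violates $c_2p\neq0$) is excluded and treated separately as the gradient-like case, so ``most general'' here means most general within the non-degenerate branch $X\wedge\der z\neq0$.
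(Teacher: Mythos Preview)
Your proposal is correct and follows essentially the same route as the paper: replace the singular particular solution by $\la\al^2 z\sqrt{1-z^2}\,\arcsin z$, reduce via $A=(1-z^2)^{3/4}B+A_p$ to the associated Legendre equation with $\ell=-\tfrac32$, write down the elementary fundamental set $B_1=(1-z^2)^{1/4}$, $B_2=z(1-z^2)^{-1/4}$, and read off $g$, $X$, $F$. Your added remarks (the Wronskian check and the explicit exclusion of the $p\equiv0$ gradient branch) are sound refinements but do not alter the argument.
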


For $\boxed{c_2=0}$ the ODE (\ref{leglor}) reduces to a first order ODE, with general solution:
\[
A(z)=\be \frac{1-z^2}{z^2}+\la \al^2 (1-z^2)
\]
with an arbitrary constant $\beta$.
In this case we have 
$$\boxed{\begin{aligned}
&g=(\la \al^2+\tfrac{\be}{z^2})(1-z^2)\der x^2-\frac{\al^2\der z^2}{(\la \al^2+\tfrac{\be}{z^2})(1-z^2)},\\
&X=-\frac{1}{\al}(\la \al^2+\tfrac{\be}{z^2})\der x-\frac{z\der z}{1-z^2}.
\end{aligned}}$$

The transformation to $B(z)$ is not possible. Note that this solution is not well-defined on the set $\{z=0\}$.

For $\boxed{p(y)=0}$, the solutions obtained are gradient-like. 

Repeating what we have done in the $p(y)=0$ Riemannian case, we find that the Lorentzian $c_1=1$ generalised Ricci soliton equations are solved by the metric given by
\[
g=A(z)\der x^2-\frac{\der  z^2}{A(z)},
\] 
and 1-form $X$ given  by $X=\frac{\der z}{z}$,
provided that the function $A(z)$ satisfies an ODE
\begin{equation}\label{lorode}
c_2 z A''(z)+A'(z)+2\la z=0. 
\end{equation}

General solution to (\ref{lorode}) depends on whether or not $(c_2^2-1)c_2$ is zero. Like the Riemannian situation the solution is always expressible in terms of elementary functions. The corresponding solitons have
$$\boxed{X=\frac{\der z}{z}}$$
and
$$\begin{aligned}&\boxed{g=\Big( \frac{\alpha c_2 z^{{\frac{c_2-1}{c_2}}}}{c_2-1}-\frac{\la z^{2}}{c_2+1}+\beta\Big)\der x^2-\frac{\der z^2}{ \frac{\alpha c_2 z^{{\frac{c_2-1}{c_2}}}}{c_2-1}-\frac{\la z^{2}}{c_2+1}+\beta}\quad{\rm if}\quad c_2(c_2^2-1)\neq 0,}\\
&\boxed{g=\Big(-\la z^{2}+\beta\Big)\der x^2-\frac{\der z^2}{-\la z^{2}+\beta}\quad{\rm if}\quad c_2= 0,}\\
&\boxed{g=\Big(\al \ln z+\beta-\frac{\la}{2}z^2\Big)\der x^2-\frac{\der z^2}{\al \ln z+\beta-\frac{\la}{2}z^2}\quad{\rm if}\quad c_2=1,}\\
&\boxed{g=\Big(\frac{\al}{2}z^2 +\beta-\frac{\la}{2}z^2+\la z^2 \ln z\Big)\der x^2-\frac{\der z^2}{\frac{\al}{2}z^2 +\beta-\frac{\la}{2}z^2+\la z^2 \ln z}\quad{\rm if}\quad c_2=-1,}\end{aligned}$$
with $\al$ and $\beta$ being arbitrary constants.

\subsection{Examples of 2D metric projective structures with skew-symmetric Ricci tensor representative}
The generalised Ricci soliton equations with parameters $c_1=1$, $c_2=-1$, $\la=0$, is the equation determining whether the projective class of the Levi-Civita connection of a given 2D metric admits skew-symmetric Ricci tensor representative (see \cite{skewricci2D}). In \cite{thesis} and \cite{skewricci2D} it is known as the projective Einstein-Weyl (pEW) equations. Projective structures with skew-symmetric Ricci tensor are of geometric interest for their relationship with 3-webs and Veronese webs (see \cite{veroneseweb}). 

In the Riemannian setting, solutions to the pEW equations are obtained by setting the parameters $c_1=1,$ $c_2=-1$, $\la=0$ in Proposition \ref{singular1}, while in the Lorentzian setting, solutions to the pEW equations are obtained by setting the parameters $c_1=1,$ $c_2=-1$, $\la=0$ in Proposition \ref{singular2}. 
The skew-symmetric part of the Ricci tensor is some constant multiple of $F$ (see \cite{skewricci2D}) for details. 
We obtain 
\begin{proposition}\label{pewe}
There is a 3-parameter family of Riemannian generalised Ricci soliton pairs satisfying the projective Einstein-Weyl (pEW) equation. They are given by:
$$\boxed{\begin{aligned}
&g=\Big(\beta(z^2+1)+\gamma z \sqrt{z^2+1}\Big)\der x^2+\frac{\alpha^2}{\beta(z^2+1)+\gamma z \sqrt{z^2+1}}\der z^2,\\
&X=\Big(\frac{\beta}{\alpha}+\frac{\gamma z}{\al \sqrt{z^2+1}}\Big)\der x+\frac{z}{1+z^2}\der{z},
\end{aligned}}$$
where $\alpha\neq 0$ is a constant.
In this case, the 2-form $F$ is
$$F=\der X=-\frac{\gamma}{\al(z^2+1)^{\tfrac{3}{2}}}\der x\wedge\der z.$$
A similar example is obtained in the Lorentzian setting.  There is a 3-parameter family of Lorentzian generalised Ricci soliton pairs satisfying the pEW equations explicitly given by:
$$\boxed{\begin{aligned}
&g=\Big(\beta(1-z^2)+\gamma z \sqrt{1-z^2}\Big)\der x^2-\frac{\alpha^2}{\beta(1-z^2)+\gamma z \sqrt{1-z^2}}\der z^2,\\
&X=-\Big(\frac{\beta}{\alpha}+\frac{\gamma z}{\al \sqrt{1-z^2}}\Big)\der x-\frac{z}{1-z^2}\der{z},
\end{aligned}}$$
where $\alpha\neq 0$ is a constant.
In this case, the skew-symmetric Ricci tensor is given by some constant multiple of the 2-form
$$F=\der X=\frac{\gamma}{\al(1-z^2)^{\tfrac{3}{2}}}\der x\wedge\der z.$$
Observe that in order for $F$ to be non-vanishing we require $\ga$ to be non-zero. If $\ga=0$, we obtain a metric of constant curvature. This agrees with the fact that projectively Einstein or Ricci-flat surfaces in $2$ dimensions are projectively flat.  
\end{proposition}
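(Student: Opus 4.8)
The plan is to read off Proposition \ref{pewe} directly from Propositions \ref{singular1} and \ref{singular2} by specialising to the parameter values $(c_1,c_2,\la)=(1,-1,0)$ that define the projective Einstein--Weyl equations. Recall from Table \ref{GRSexamples} that the pEW equation is precisely the generalised Ricci soliton equation (\ref{grs0}) with $c_1=1$, $c_2=-1$ and $\la=0$. For the Riemannian ansatz (\ref{mansatz}), (\ref{ansc1}) the \emph{most general} solution of the $c_1=1$, $c_2=-1$ case is already exhibited in Proposition \ref{singular1}, so the Riemannian half of the statement is obtained simply by setting $\la=0$ there.

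Concretely, upon $\la=0$ every term carrying the factor $\la\al^2\,{\rm arcsinh}\,z$ disappears, so that $A(z)=\be(1+z^2)+\ga z\sqrt{1+z^2}$ and the boxed formulas of Proposition \ref{singular1} for $g$ and $X$ collapse to those claimed here; likewise the $F$ of Proposition \ref{singular1}, namely $F=\tfrac{1}{\al(1+z^2)^{3/2}}\big(\la\al^2 z\sqrt{1+z^2}+\la\al^2\,{\rm arcsinh}\,z-\ga\big)\der x\wedge\der z$, reduces to $F=-\tfrac{\ga}{\al(z^2+1)^{3/2}}\der x\wedge\der z$. The remaining free data is $(\al,\be,\ga)$ with $\al\neq 0$, i.e.\ a $3$-parameter family. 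The Lorentzian half is word-for-word the same argument applied to Proposition \ref{singular2}: putting $\la=0$ kills the ${\rm arcsin}\,z$ terms, leaves $A(z)=\be(1-z^2)+\ga z\sqrt{1-z^2}$, and gives $F=\tfrac{\ga}{\al(1-z^2)^{3/2}}\der x\wedge\der z$, which are exactly the boxed Lorentzian expressions.

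It then remains to justify the closing observation about $F$. From the explicit formula $F=\mp\tfrac{\ga}{\al(1\pm z^2)^{3/2}}\der x\wedge\der z$ (top sign Riemannian, bottom Lorentzian) it is immediate that $F$ vanishes identically if and only if $\ga=0$. When $\ga=0$ one has $g=\be(1\pm z^2)\der x^2\pm\tfrac{\al^2}{\be(1\pm z^2)}\der z^2$; the rescaling $w=\al z$ brings this into the normal form $A(w)\der x^2\pm\tfrac1{A(w)}\der w^2$ with $A$ a quadratic polynomial in $w$, so by the curvature formulas recorded just after (\ref{mansatz}) and (\ref{lool}) the Gauss curvature $K=\mp\tfrac12 A''(w)$ is constant. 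Since the skew-symmetric part of the Ricci representative is a constant multiple of $F$ (see \cite{skewricci2D}), this is exactly the expected behaviour: a surface of constant curvature is projectively flat and its projective class contains a Ricci-flat, hence symmetric-Ricci, connection, so no genuine skew-symmetric Ricci representative can survive.

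I do not expect any essential obstacle here: the content of the proof is the remark that the $\la\to0$ limit of the (already boxed) formulas of Propositions \ref{singular1} and \ref{singular2} reproduces the stated formulas verbatim, which is a one-line substitution in each case, together with the elementary constant-curvature check above. The only mild bookkeeping point to watch is the consistent tracking of the Riemannian versus Lorentzian sign conventions (the $1\pm z^2$ and the sign of the $\der z^2$ term), and this is already handled upstream in Propositions \ref{singular1} and \ref{singular2}.
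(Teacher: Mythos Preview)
Your proposal is correct and follows exactly the paper's approach: the paper states just before Proposition \ref{pewe} that the Riemannian and Lorentzian families are obtained by setting $(c_1,c_2,\la)=(1,-1,0)$ in Propositions \ref{singular1} and \ref{singular2} respectively, and the proposition is then recorded without further proof. Your additional verification that $\ga=0$ yields constant curvature (via the rescaling $w=\al z$ to reduce to the quadratic-$A$ case of (\ref{mansatz}), (\ref{lool})) is a welcome elaboration of a point the paper only asserts.
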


For example, in the Riemannian case  taking $\alpha=1$, $\beta=1$, $\ga=1$ we have
\[
g=\Big(z^2+1+z \sqrt{z^2+1}\Big)\der x^2+\frac{1}{z^2+1+z \sqrt{z^2+1}}\der z^2
\] 
and $A(z)=z^2+1+z \sqrt{z^2+1}>0$ on $\R^2$.
The Gauss curvature for this example is given by
\begin{align*}
K=&-1-\frac{z(2z^2+3)}{2(z^2+1)^{\frac{3}{2}}}.
\end{align*}
The Liouville or Cotton tensor for metric projective structures is given by 
\begin{align*}
Y_a=\epsilon^{bc}Y_{bca}=\epsilon^{bc}(\nd_bR_{ca}-\nd_cR_{ba})
=&2\epsilon^b{}_{a}\nd_bK.
\end{align*}
For this example, 
\begin{align*}
Y=\frac{3(2z^2+1+2z\sqrt{z^2+1})}{(z^2+1+z\sqrt{z^2+1})(z^2+1)^{\frac{3}{2}}}\der x.
\end{align*}
The local obstructions obtained in \cite{skewricci2D} vanish for this example.
For the general solution with $\al=1$, we have the second order ODE associated to the projective structure given by 
\begin{align*}
{\frac{d^{2}z}{d{x}^{2}}}=&\frac{3}{2}\,{\frac { \left( 2\,\beta
z\sqrt {{z}^{2}+1}+\gamma(2{z}^{2}+1) \right)}{\sqrt {{z}^{2}+1} \left( \ga z\sqrt {{z}
^{2}+1}+\beta({z}^{2}+1) \right) }} \left( {\frac{dz}{dx}} \right) ^{2}\\
&+\,{\frac { \left( \ga z\sqrt {{z}^{2}+1}
+\beta({z}^{2}+1) \right)  \left( 2\beta z\sqrt {{z}^{2}+1}+\ga (2{z}^{2}+1)
 \right) }{2\sqrt {{z}^{2}+1}}}.
\end{align*}
\begin{remark}
If we take $\al=1, \beta=0, \ga=1$, so that $A(z)=z\sqrt{z^2+1}$ on $\R^2$ we observe there is an apparent singularity at the line $\{z=0\}$, but since the Gauss curvature given by
\[
K=-\frac{z (2z^2+3)}{2(z^2+1)^{\frac{3}{2}}}
\] is defined everywhere, this singularity at $z=0$ arises from the coordinates we have chosen.
\end{remark}

\begin{remark}\label{3dew}
Let $(g_{\Si},X_{\Si})$ denote a soliton pair from Proposition \ref{pewe} satisfying the pEW equations on a 2D Riemannian or Lorentzian manifold $(\Si, g_{\Si})$. Taking the conformal class of the product metric $g_{\Si}+dt^2$ on $\Si \times \R$ gives us a Riemannian  or Lorentzian Einstein-Weyl structure with the Weyl connection given by pulling back $X_{\Si}$ to $\Si \times \R$.  Higher dimensional solutions to pEW   (resp. Einstein-Weyl) equations on the product manifold $\Si \times \R^{n-2}$ can be obtained by solving the relevant generalised Ricci soliton on $\Si$ with parameters $(c_1, c_2, \la)=(1, -\frac{1}{n-1}, 0)$ (resp.  $(c_1, c_2 \la)=(1, -\frac{1}{n-2}, 0)$ ) and taking the product metric with the flat one on $\R^{n-2}$. 
\end{remark}

\subsection{Vacuum near-horizon geometries examples}
To get vacuum near-horizon geometries examples we specialise to the case $c_1=1$, $c_2=\frac{1}{2}$, $\la=0$ in Proposition \ref{ell}. We obtain
\begin{equation}
A \left( z \right)=B(z)=\beta\frac{ 2z}{1+z^2}+\gamma\frac{ (1-z^2)}{1+z^2}.\label{sin}
\end{equation}
\begin{remark}
The solution (\ref{sin}) is the general solution to the (almost) Legendre differential equation (\ref{legqe}) with the parameter $c_2=\frac{1}{2}$. Writing this equation explicitly we have
\begin{equation}(1+z^2)B''(z)+2zB'(z)+\frac{4}{1+z^2}B(z)=0.\label{sinu}\end{equation}
The familiar form $B_1(z)=\tfrac{2z}{1+z^2}$, $B_2(z)=\tfrac{1-z^2}{1+z^2}$ of the fundamental solutions constituting (\ref{sin}), and the fact that $B^2_1(z)+B_2^2(z)=1$, suggests the introduction of a variable $\theta$ such that
\begin{equation}\cos\theta= \frac{1-z^2}{1+z^2}\quad{\rm and}\quad\sin\theta=\frac{2z}{1+z^2}.\label{sini}\end{equation}
The variable change $z\to\theta$, given by (\ref{sini}), is the inverse stereographic projection from the unit circle parametrised by $\theta\in [0,2\pi]$ to the real line parametrised by $z\in]-\infty,\infty[$. The fact that the solution for $B(z)$ can be rewritten in the new variable $\theta$ as $$B(\theta)=\be \cos\theta+\ga\sin\theta$$ means, that under the variable change $z\to\theta$, the (almost) Legendre equation (\ref{sinu}) magically becomes the harmonic oscillator equation
$$B''(\theta)=-B(\theta).$$  
Actually making a more general variable change
$$\cos(\omega \theta)= \frac{1-z^2}{1+z^2}\quad{\rm and}\quad\sin(\omega\theta)=\frac{2z}{1+z^2}$$
we bring the (almost) Legendre equation with $\ell=0$, $m=2$ to the most general harmonic oscillator equation $B''(\theta)=-\omega^2 B(\theta)$.  
\end{remark}

\vspace{1truecm}
Using the solution (\ref{sin}) we get the following specialisation of the Proposition \ref{ell}. 
\begin{proposition}\label{nhge}
There is a 3-parameter family of generalised Ricci soliton pairs satisfying the near-horizon geometry equation. They are given by:
$$\boxed{\begin{aligned}
&g=\left(\frac{2\beta z+\gamma (1-z^2)}{1+z^2}\right)\der x^2+\left(\frac{\al^2(1+z^2)}{2\beta z+\gamma (1-z^2)}\right)\der z^2
,\\ 
&X= \frac{(2\beta z+\ga (1-z^2))}{\al (1+z^2)^2}\der x+\frac{z}{1+z^2}\der z.&
\end{aligned}}$$
The Gauss curvature for these solitons is:
$$K=\frac{2\ga (1-3z^2)}{\al^2(1+z^2)^3}+\frac{2\be z(3-z^2)}{\al^2(1+z^2)^3},$$
and the Maxwell 2-form 
$$F~=~\Big(~\frac{2\ga z(3-z^2)}{\al(1+z^2)^3}+\frac{2\be(3z^2-1)}{\al(1+z^2)^3}~\Big)~\der x\wedge\der z.$$
\end{proposition}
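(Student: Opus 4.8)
The plan is to deduce Proposition \ref{nhge} directly from Proposition \ref{ell} by setting $c_1=1$, $c_2=\tfrac12$, $\la=0$. First I would record what the various fractional exponents of $1+z^2$ in Proposition \ref{ell} become at $c_2=\tfrac12$: one has $\tfrac{2c_2-1}{4c_2}=0$, $-\tfrac{1+2c_2}{4c_2}=-1$, $-\tfrac{1+6c_2}{4c_2}=-2$, together with $\tfrac{1}{2c_2}=1$ and $\tfrac{1}{1+c_2}=\tfrac23$. In particular the factor $(1+z^2)^{\tfrac{2c_2-1}{4c_2}}$ in the change of variable \eqref{cvar} is trivial, so with $\la=0$ one simply has $A(z)=B(z)$, which is the first assertion in \eqref{sin}.

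Next I would identify and solve the ODE that $B=B(z)$ satisfies. Substituting $\tfrac{1}{2c_2}=1$ into \eqref{legqe} gives $(\tfrac{1}{2c_2}-1)\tfrac{1}{2c_2}=0$ and $(\tfrac{1}{2c_2}+1)^2=4$, so \eqref{legqe} reduces to \eqref{sinu}, namely $(1+z^2)B''(z)+2zB'(z)+\tfrac{4}{1+z^2}B(z)=0$. One checks directly that $\tfrac{2z}{1+z^2}$ and $\tfrac{1-z^2}{1+z^2}$ solve this equation and are linearly independent (alternatively, the substitution \eqref{sini} turns it into $B''(\theta)=-B(\theta)$); being second order and linear, its general solution is $B(z)=\beta\tfrac{2z}{1+z^2}+\gamma\tfrac{1-z^2}{1+z^2}$, i.e.\ \eqref{sin}.

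Finally I would substitute $A(z)=B(z)=\tfrac{2\beta z+\gamma(1-z^2)}{1+z^2}$ (and the resulting $B'(z)=\tfrac{2\beta(1-z^2)-4\gamma z}{(1+z^2)^2}$) into the boxed expressions of Proposition \ref{ell} with the exponent values above. The formulas for $g$ and $X$ follow at once, using $(1+z^2)^0=1$ in $g$ and $(1+z^2)^{-1}$ in the $\der x$-component of $X$, together with $\la=0$. For the curvature, the $K$-formula of Proposition \ref{ell} specialises to $K=\tfrac{1}{\alpha^2(1+z^2)}\big(\tfrac{2B(z)}{1+z^2}+zB'(z)\big)$, and inserting $B$, $B'$ and simplifying the rational function yields the displayed $K$; likewise the $F$-formula specialises to $F=\tfrac{1}{\alpha(1+z^2)^2}\big(2zB(z)-(1+z^2)B'(z)\big)\der x\wedge\der z$, which simplifies to the displayed Maxwell form. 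The hypothesis $X\wedge\der z\neq0$ of Proposition \ref{ell} holds on the open set $\{B(z)\neq0\}$, which is nonempty precisely when $(\beta,\gamma)\neq(0,0)$, the generic case covered by the statement.

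The step I expect to be the only real work is the last one: carrying the three distinct exponents of $1+z^2$ through the general formulas of Proposition \ref{ell} without slips, and then performing the rational-function simplifications that collapse the $B$, $B'$ substitutions into the compact closed forms quoted for $K$ and $F$. This is routine but somewhat lengthy, and is most safely checked with computer algebra.
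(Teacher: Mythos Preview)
Your proposal is correct and follows exactly the paper's own approach: specialise Proposition~\ref{ell} to $c_1=1$, $c_2=\tfrac12$, $\la=0$, solve the resulting ODE \eqref{sinu} (whose general solution is \eqref{sin}), and substitute back. The paper presents this more tersely, but your explicit bookkeeping of the three exponents and the intermediate forms of $K$ and $F$ is precisely what underlies the derivation.
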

It is well known that in this class of solitons the extremal Kerr horizon geometry is included (see e.g. \cite{Jezierski}, \cite{stable}, \cite{KL0}). In the next section we will pick up this soliton using simple geometric analysis arguments.

\subsubsection{How singular can an extremal horizon with Killing symmetry be?}\label{gil}
Since the metric $g$ of the soliton given in Proposition \ref{nhge} has a Killing symmetry $\partial_x$, we interpret $g$ as a metric of a surface of revolution, with the azimuthal coordinate $x$. 

Looking at the formula for $g$ in Proposition \ref{nhge} we see that the metric is regular for all values of the coordinates $(x,y)$ except the points $(x,z)$ for which $A(z)=0$, or explicitly, at the points $(x,z)$ satisfying
$$\gamma z^2-2\be z-\ga=0.$$ 
Thus there are at most two values of $z$, for which $g$ is singular. 

We have two cases: either $\boxed{\gamma\neq 0}$ and we have two singular $z$'s, namely
$$z_\mp=\tfrac{\be}{\ga}\mp\sqrt{1+\tfrac{\be^2}{\ga^2}},$$
or $\boxed{\gamma=0}$ and we have only one $z$, namely $z_0=0$. 

Although the Gauss curvature $K(z)$ of the metric is regular at points where $A(z)=0$:
$$K_\mp=K(z_\mp)=-\tfrac{\ga}{2\al^2}(1\pm\tfrac{\be}{\sqrt{\be^2+\ga^2}}),\quad K(0)=K_0=0,$$
a further analysis is needed to determine if the singularity of the metric at $z_\mp$, $z_0$ comes from a bad choice of coordinates or if it is essential.

We first analyse the case when $\boxed{\gamma\neq 0}$.

Our interpretation of $g$ as a metric of a surface of revolution, and the fact that the $\der x^2$ term in the metric vanishes at $z_\pm$, enables us to think about the singular points $(x,z_\pm)$ as two antipodal points on the symmetry axis of the metric. Whether
these two points are in finite metric distances from the regular points of the
surface, and if so, whether they are regular or singular points of the surface, is to be
determined.

One way of detecting an essential singularity at a suspected point consists in passing to `polar' coordinates $(x,y)$ centred at this point. For our suspected point $z_-$, such `polar' coordinates are given by the relation: $(x,z)=(x,y^2+z_-)$, with the `pole' at the singular point $(x,y=0)$. To see if the `pole' is smooth, or if it has an essential singularity we check for the `conic angle' at the `pole'. This is the number $2\pi -\Phi$, where 
$$\Phi=\lim_{\epsilon\to 0}\frac{c(\epsilon)}{s(\epsilon)},$$
with $s(\epsilon)$ being the radius of a small metric circle centred at the `pole', and with $c(\epsilon)$ being the circumference of this circle. 

Only if $\Phi$ equals to $2\pi$, the `pole' is a smooth point. 

If $\Phi$ does not equal to $2\pi$, but if it is still a well defined real number, we have a relatively simple `conic' singularity, with the conic angle $2\pi-\Phi$. 

If $\Phi$ is not a real number - more complicated singularity occurs at the `pole'.  

In our case of the pole at $z_-$ we first transform the soliton metric to the new coordinates $(x,y)$ obtaining $g=g_{xx}(y)\der x^2+g_{yy}(y)\der y^2$, and set the range of the coordinate $x$ on circles tangent to $\partial_x$ to be $x\in[0,\chi]$. Then, for small $\epsilon$, we find that we have:
$$s_-(\epsilon)=\int_0^\epsilon \sqrt{g_{yy}(y)}~\der y~=~\frac{2\al}{\ga}~\epsilon~\sqrt{\sqrt{\be^2+\ga^2}-\be}+{\mathcal O}(\epsilon^2)$$
and
$$c_-(\epsilon)=\int_0^\chi \sqrt{g_{xx}(\epsilon)}~\der x=\chi~\epsilon~\sqrt{\sqrt{\be^2+\ga^2}+\be}+{\mathcal O}(\epsilon^2).$$

Both numbers $s_-(\epsilon)$ and $c_-(\epsilon)$ are finite for small $\epsilon$. Because $\lim_{\epsilon\to 0}s_{-}(\epsilon)\neq 0$, also the angle $\Phi$ is a well defined real number:
$$\Phi_-~=~\frac{\chi}{2\al}~(\sqrt{\be^2+\ga^2}+\be).$$
Since $\Phi$ is well defined for all $\al\neq 0$,  we have at most `conic' singularity at $z_-$. 

We can remove this kind of singularity, very easily, by an appropriate choice of the upper limit $\chi$ of the interval $[0,\chi]$. For this we only need that 
$$\Phi_-=2\pi.$$
Solving this for $\chi$ gives
$$\chi=\chi_-=4\pi ~\frac{\al}{\ga}~(\sqrt{1+\tfrac{\be^2}{\ga^2}}-\tfrac{\be}{\ga}),$$
and the choice of the range for $x$ to be $x\in[0,\chi_-]$ makes $z_-$ a smooth point of the considered surface.

If we started with $z_+$ instead of $z_-$, we could change coordinates via $(x,z)=(x,-y^2+z_+)$, and perform a similar analysis as above, to show that $z_+$ is a `conic' singular point with 
$$\Phi_+=~=~\frac{\chi}{2\al}~(\sqrt{\be^2+\ga^2}-\be).$$
The conical singularity at this point could be smoothed out by choosing $\chi$ such that $\Phi_+=2\pi$, which would give the following upper limit for the azimuthal coordinate $x$:
$$\chi_+=4\pi ~\frac{\al}{\ga}~(\sqrt{1+\tfrac{\be^2}{\ga^2}}+\tfrac{\be}{\ga}).$$

Thus we can always interpret the metric of the soliton as a metric of a closed surface of revolution, smooth everywhere except one of the points $z_\pm$. If the non-smooth point is, say, at $z_\pm$, then its antipodal point at $z_\mp$ is smooth, provided that the azimuthal coordinate $x$ ranges from 0 to  $\chi_\mp$.   

To make this surface smooth also at the antipode of $z_\mp$, we have to identify originally unrelated `azimuthal angles' $x$ of the respective polar coordinate systems around $z_-$ and $z_+$. This in particular means that to have both points $z_-$ and $z_+$ smooth, we need to impose $$\chi_-=\chi_+.$$ 
This is possible if and only if $$\beta = 0.$$
Thus only solitons with $\beta=0$ may be interpreted as living on a smooth surface.

In the cases when $\beta\neq 0$ we still have solitons living on surfaces with topology of a 2-sphere, but in these cases the solitons surfaces always have one point with `conical singularity'. Smoothing out the point $z_\mp$, produces a conical singularity at the antipodal point $z_\pm$. A singularity with a definite `conical angle' equal to $2\pi\big(1-(\sqrt{1+\tfrac{\be^2}{\ga^2}}\mp\tfrac{\be}{\ga})\big)$.

Concluding this part of our analysis we note that although the horizon surfaces with $\be\neq 0$ have a singularity at one point, the singularity there is very mild. It is only `conical', as opposed to any kind of a sharper one. We have the following 
\begin{proposition}
If $\al\neq 0$, $\ga\neq 0$ and $\boxed{\be\neq0}$ we have two types of surfaces on which the 3-parameter family of near horizon geometries described by Proposition \ref{nhge} live. The surfaces in both types have topology of a 2-sphere, and they are smooth everywhere except one point, in which the surface has conical singularity. The conic angle at the singular point is $$2\pi\big(1-(\sqrt{1+\tfrac{\be^2}{\ga^2}}\mp\tfrac{\be}{\ga})\big),$$
where the $\mp$ sign distinguishes the two types.\end{proposition}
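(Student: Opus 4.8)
The plan is to carry out the surface-of-revolution analysis outlined just above, organised around the two simple zeros of the function $A(z)=\tfrac{2\be z+\ga(1-z^2)}{1+z^2}$ that appears as the coefficient of $\der x^2$ in the metric of Proposition~\ref{nhge}. Since $\partial_x$ is a Killing field, $g=A(z)\der x^2+\tfrac{\al^2}{A(z)}\der z^2$ is a metric of revolution with azimuthal coordinate $x$, and its only failure of regularity is where $A(z)=0$, i.e.\ where the numerator $-\ga z^2+2\be z+\ga$ vanishes. Because $\ga\neq0$ this quadratic has two distinct real roots $z_\mp=\tfrac{\be}{\ga}\mp\sqrt{1+\tfrac{\be^2}{\ga^2}}$, each a \emph{simple} zero of $A$, and $A$ has a fixed sign on the interval between them; on that interval $g$ is a smooth Riemannian metric of revolution, and adjoining the two boundary circles (which collapse to points because $A$ vanishes there) compactifies it to a closed surface. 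First I would make the local structure near a boundary point precise: putting $z=\pm y^2+z_\mp$ brings $g$ into the form $g_{xx}(y)\der x^2+g_{yy}(y)\der y^2$ with $g_{yy}$ smooth and nonzero at $y=0$ and $g_{xx}(y)=\mathcal{O}(y^2)$, so the compactification is topologically an $S^2$ obtained by collapsing circle fibres at the two ends of a closed interval, and each added point is \emph{a priori} only a cone point.

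Next I would compute the cone angle at each pole. Fixing the range $x\in[0,\chi]$, the radius and circumference of the geodesic circle of parameter $\epsilon$ about $z_\mp$ are $s_\mp(\epsilon)=\int_0^\epsilon\sqrt{g_{yy}(y)}\,\der y$ and $c_\mp(\epsilon)=\chi\sqrt{g_{xx}(\epsilon)}$; Taylor-expanding using $A(z_\mp)=0$ and $A'(z_\mp)\neq0$ gives $s_\mp(\epsilon)=\tfrac{2\al}{\ga}\epsilon\sqrt{\sqrt{\be^2+\ga^2}\mp\be}+\mathcal{O}(\epsilon^2)$ and $c_\mp(\epsilon)=\chi\,\epsilon\sqrt{\sqrt{\be^2+\ga^2}\pm\be}+\mathcal{O}(\epsilon^2)$, so that $\Phi_\mp=\lim_{\epsilon\to0}c_\mp(\epsilon)/s_\mp(\epsilon)=\tfrac{\chi}{2\al}\big(\sqrt{\be^2+\ga^2}\pm\be\big)$ is a finite real number for every $\al\neq0$; hence each pole is at worst conical. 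The pole $z_\mp$ is smooth exactly when $\Phi_\mp=2\pi$, which pins down $\chi=\chi_\mp=4\pi\tfrac{\al}{\ga}\big(\sqrt{1+\tfrac{\be^2}{\ga^2}}\mp\tfrac{\be}{\ga}\big)$; substituting this value of $\chi$ into $\Phi_\pm$ and simplifying produces the residual cone angle $2\pi-\Phi_\pm$ at the antipodal pole, which is the quantity recorded in the statement, the two signs distinguishing which of $z_-,z_+$ has been smoothed.

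Finally I would show that both poles cannot be made smooth at once when $\be\neq0$: that forces $\chi_-=\chi_+$, and from the displayed formula for $\chi_\mp$ this holds if and only if $\tfrac{\be}{\ga}=0$, i.e.\ $\be=0$, contrary to hypothesis. Hence for $\al\neq0$, $\ga\neq0$, $\be\neq0$ there are precisely the two closed surfaces of revolution obtained by smoothing one end or the other; each is a topological $2$-sphere, smooth away from a single point, carrying the stated conical singularity there. The main technical obstacle is the local analysis at the poles — verifying that the ``polar'' coordinate $y$ really furnishes a smooth chart on a punctured neighbourhood, that $g_{yy}$ is smooth and nonvanishing at $y=0$, and that the worst possible behaviour there is a genuine conical (and no sharper) singularity, which amounts to controlling the $\mathcal{O}(\epsilon^2)$ remainders — together with the identification of the compactified surface as $S^2$ from the two simple zeros of $A$; once those are in hand, the cone-angle limit and the dichotomy ``$\chi_-=\chi_+$ if and only if $\be=0$'' are routine.
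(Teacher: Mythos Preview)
Your proposal is correct and follows essentially the same approach as the paper: identify the two simple zeros $z_\mp$ of $A(z)$, pass to polar-type coordinates $z=\pm y^2+z_\mp$ about each, compute the cone-angle ratio $\Phi_\mp=\lim_{\epsilon\to 0}c_\mp(\epsilon)/s_\mp(\epsilon)=\tfrac{\chi}{2\al}(\sqrt{\be^2+\ga^2}\pm\be)$, smooth one pole by choosing $\chi=\chi_\mp$, and conclude that $\chi_-=\chi_+$ forces $\be=0$. The Taylor expansions, the values of $\chi_\mp$, and the residual cone angle you record all match the paper's computations exactly.
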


If $\al\neq 0$, $\ga\neq 0$ and $\boxed{\be=0}$, the surface of the near-horizon geometry from Proposition \ref{nhge} is a surface with topology of a 2-sphere, which is smooth everywhere. Since the surface is a smooth 2-sphere and its metric has Killing vector $\partial_x$, the corresponding near horizon geometry must coincide with Kerr's extremal horizon geometry by theorems of H{\'a}j{\'i}{\v c}ek \cite{haj}, Lewandowski, Pawlowski \cite{LP03} and Jezierski \cite{Jezierski}.

To see this explicitly we use formulas in Proposition \ref{nhge} with $\be=0$, and redefine the coordinate $x$ by $\sqrt{\ga}x\to x$, and the constant $\al$ via $\frac{\al}{\sqrt{\ga}}\to\al$. This removes the redundant constant $\gamma$ from the considered family of solutions. We have the following 
\begin{corollary}\label{exkerr}
There is a 1-parameter family $(\boxed{\al\neq 0},\be=0,\ga=1)$ of generalised Ricci solitons from Proposition \ref{nhge} defining a 1-parameter family of near-horizon geometries given by:
$$\boxed{\begin{aligned}
&g=\frac{1-z^2}{1+z^2}\der x^2+\frac{\al^2(1+z^2)}{1-z^2}\der z^2
,\\ 
&X= \frac{1-z^2}{\al (1+z^2)^2}\der x+\frac{z}{1+z^2}\der z.&
\end{aligned}}$$
The extremal horizon lives on a smooth surface of revolution with topology of a 2-sphere. The surface is parametrised by $(x,z)$, with the following ranges: $0\leq x\leq 4\al\pi$, $-1\leq z\leq 1$. 

This near horizon geometry coincides with the Kerr extremal horizon with mass $M=\al$. The passage to the standard Kerr coordinates is given by:  $z=\cos(\theta)$, $x=2\phi$.
\end{corollary}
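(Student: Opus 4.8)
The plan is to proceed in three steps: first to derive the displayed $(g,X)$ as a specialisation of Proposition \ref{nhge}; then to pin down the global structure of the underlying surface using the surface-of-revolution analysis of Section \ref{gil}; and finally to identify the resulting pair with the extremal Kerr horizon, either by an explicit change of variables or by the cited rigidity theorems. For the first step I would set $\be=0$ in Proposition \ref{nhge}, obtaining $g=\frac{\ga(1-z^2)}{1+z^2}\der x^2+\frac{\al^2(1+z^2)}{\ga(1-z^2)}\der z^2$ and $X=\frac{\ga(1-z^2)}{\al(1+z^2)^2}\der x+\frac{z}{1+z^2}\der z$, and then apply the substitution $\sqrt{\ga}\,x\mapsto x$, $\al/\sqrt{\ga}\mapsto\al$, which absorbs the constant $\ga$ (equivalently, one simply puts $\ga=1$) and produces exactly the boxed expressions; since this pair is a value of the family of Proposition \ref{nhge}, it automatically solves the $c_1=1$, $c_2=\tfrac12$, $\la=0$ generalised Ricci soliton equation, so no verification is needed beyond the algebra. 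Specialising the curvature formula of Proposition \ref{nhge} gives $K=\frac{2(1-3z^2)}{\al^2(1+z^2)^3}$, which is smooth and bounded for $z\in[-1,1]$; in particular the two zeros $z=\pm1$ of $A(z)=\frac{1-z^2}{1+z^2}$ are only coordinate, not curvature, singularities.

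For the second step I would note that $\partial_x$ is a Killing field and that $A(z)$ vanishes precisely at $z=\pm1$, so $g$ is the metric of a surface of revolution with two poles, at $z=-1$ and $z=+1$. I would then reuse the conical-angle computation of Section \ref{gil} with $\be=0$: there the azimuthal ranges needed to smooth the two poles were $\chi_\pm=4\pi\tfrac{\al}{\ga}(\sqrt{1+\be^2/\ga^2}\pm\be/\ga)$, which for $\be=0$ collapse to the common value $\chi_-=\chi_+=4\pi\al$ after the $\ga$-rescaling above. Hence the single choice of azimuthal period $x\in[0,4\pi\al]$ smooths both poles simultaneously, and $(g,X)$ descends to a smooth pair on a closed surface with the topology of a $2$-sphere, coordinatised by $0\le x\le4\pi\al$, $-1\le z\le1$. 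This is the second assertion of the Corollary.

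For the third step, the substitution $z=\cos\theta$, $x=2\phi$ turns $g$ into $\al^2(1+\cos^2\theta)\der\theta^2+\tfrac{4\sin^2\theta}{1+\cos^2\theta}\der\phi^2$ and $X$ into the corresponding Kerr rotation one-form; comparing with the induced metric on the bifurcation $2$-sphere of the extremal Kerr horizon, namely $M^2(1+\cos^2\theta)\der\theta^2+\tfrac{4M^2\sin^2\theta}{1+\cos^2\theta}\der\phi^2$ with $r_+=a=M$, identifies the geometry — once the overall scale is matched through $\al$ — with extremal Kerr of mass $M=\al$. A cleaner and fully rigorous route, which I would prefer for the actual write-up, is to invoke rigidity directly: $(g,X)$ is a smooth, axisymmetric solution of the vacuum ($\la=0$) near-horizon equation whose cross-section is a $2$-sphere, so by the theorems of H\'aj\'i\v{c}ek \cite{haj}, Lewandowski--Pawlowski \cite{LP03} and Jezierski \cite{Jezierski} it must coincide with the extremal Kerr horizon; the free parameter $\al$ is then fixed by matching one invariant, for instance the total area or the value $K=-\tfrac{1}{2\al^2}$ of the Gauss curvature at a pole, which yields $M=\al$.

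The step I expect to be the genuine obstacle is the last one, and the difficulty there is bookkeeping rather than mathematics: reconciling the period of the azimuthal coordinate, the overall normalisation of $g$, and the standard Kerr parameters so that the identification "$M=\al$" comes out without spurious factors. Carrying this out by a brute-force change of variables is error-prone; leaning instead on the H\'aj\'i\v{c}ek--Lewandowski--Pawlowski--Jezierski uniqueness theorems — applicable exactly because the surface is a smooth $S^2$, as established in the second step — sidesteps the bookkeeping and delivers the statement in a few lines.
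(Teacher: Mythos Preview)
Your proposal is correct and follows essentially the same route as the paper: specialise $\be=0$ and rescale to absorb $\ga$, use the conical-angle analysis of Section~\ref{gil} to show both poles are smooth precisely when the azimuthal period is $4\pi\al$, and then identify the resulting axisymmetric vacuum near-horizon geometry on a smooth $S^2$ with extremal Kerr via the H\'aj\'i\v{c}ek / Lewandowski--Pawlowski / Jezierski uniqueness results, with the explicit change $z=\cos\theta$, $x=2\phi$ recorded afterwards. Your awareness that the brute-force coordinate matching is the error-prone step, and that the rigidity theorems circumvent it, is exactly the paper's stance.
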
 

In the case of $\boxed{\gamma=0}$ the metric $g$ appearing in Proposition \ref{nhge} has only one singular point at $z=0$. Using the arguments presented for the $\gamma\neq 0$ case, we show that this point can be interpreted as a smooth point on a surface $\Sigma$ parametrised by $(x,z)$, with the variable $x$ ranging from $0$ to $$\chi_o~=~2\pi~\frac{\al}{\be}.$$ This regularises the only singular point on $\Sigma$ and defines a smooth surface, with a near-horizon geometry structure on it. However, contrary to the case $\gamma\neq 0$, the surface $\Sigma$ is open, as the variable $z$ can now run from $z=0$ to $z=+\infty$, and because the length integral $$l=\int_0^{+\infty}\frac{\al}{\sqrt{\be}}\sqrt{\frac{1+z^2}{z}}\der z~>~\int_0^{+\infty}\frac{\al}{\sqrt{\be}}\sqrt{z}\der z$$ of any path from the `pole' $z=0$ to $z=\infty$, along constant $x$, diverges. We again redefine the coordinate $x$ via $\sqrt{\be}x\to x$, and the constant $\al$ via $\tfrac{\al}{\sqrt{\be}}\to\al$,
  to obtain the following 
\begin{proposition}
There is a 1-parameter family $(\boxed{\al\neq 0},\be=1,\ga=0)$ of generalised Ricci solitons from Proposition \ref{nhge}, defining a 1-parameter family of near-horizon geometries given by:
$$\boxed{\begin{aligned}
&g=\frac{2 z}{1+z^2}\der x^2+\frac{\al^2(1+z^2)}{2 z}\der z^2
,\\ 
&X= \frac{2 z}{\al (1+z^2)^2}\der x+\frac{z}{1+z^2}\der z.&
\end{aligned}}$$
The extremal horizon lives on a smooth open surface of revolution parametrised by $(x,z)$ with ranges: $0\leq x\leq 2\pi\al$, $0\leq z\leq +\infty$. 
\end{proposition}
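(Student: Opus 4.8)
\emph{Proof (plan).}
The plan is to obtain the statement as the $\ga=0$ specialisation of Proposition~\ref{nhge}, followed by the local analysis of the metric at its two ends $z=0$ and $z=+\infty$, carried out exactly as in Section~\ref{gil}. Putting $\ga=0$ in the formulas of Proposition~\ref{nhge} (and keeping $\be$ for the moment) gives $g=\frac{2\be z}{1+z^2}\der x^2+\frac{\al^2(1+z^2)}{2\be z}\der z^2$ and $X=\frac{2\be z}{\al(1+z^2)^2}\der x+\frac{z}{1+z^2}\der z$. Since Proposition~\ref{nhge} already establishes that the whole three-parameter family solves the near-horizon geometry equation, i.e.\ the closed system (\ref{nhg2}) with $(c_1,c_2,\la)=(1,\tfrac12,0)$, and this property is manifestly preserved under the linear change $x\mapsto\sqrt{\be}\,x$ together with the constant redefinition $\al\mapsto\al/\sqrt{\be}$, the boxed pair $(g,X)$ is automatically a generalised Ricci soliton of the stated type; no fresh substitution is needed, though one could of course verify it by plugging into (\ref{nhg2}). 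This also reduces the effective number of free constants from two to one, namely $\al$.

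For the geometric part I would first note that $g$ is positive definite precisely where $A(z):=\frac{2z}{1+z^2}>0$, that is on $z\in(0,+\infty)$, where it is smooth and admits the Killing field $\partial_x$, so $(M,g)$ is a surface of revolution with azimuthal coordinate $x$. With $\ga=0$ the degeneracy locus $A(z)=0$ consists of the single point $z=0$, so this is the only candidate singular point. At $z=0$ I follow the `conic angle' recipe of Section~\ref{gil}: substitute $z=y^{2}$, which turns the metric into $g=2\al^{2}(1+y^{4})\der y^{2}+\frac{2y^{2}}{1+y^{4}}\der x^{2}$, and compute $\Phi=\lim_{\epsilon\to0}c(\epsilon)/s(\epsilon)$ for a small metric circle about $z=0$; from $s(\epsilon)=\sqrt2\,\al\,\epsilon+\mathcal O(\epsilon^{5})$ and $c(\epsilon)=\chi\sqrt2\,\epsilon+\mathcal O(\epsilon^{5})$, with $\chi$ the length of the $x$-interval, one gets $\Phi=\chi/\al$, so the singularity is at worst conical and becomes a genuine smooth pole exactly when $\chi=2\pi\al$; smoothness, not merely conic regularity, follows because $\frac{2z}{1+z^{2}}$ is a smooth function of $z=y^{2}$, as in the analysis leading to Corollary~\ref{exkerr}. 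For the end $z\to+\infty$ I estimate the length of a ray $x=\mathrm{const}$: $\int_{1}^{\infty}\sqrt{g_{zz}}\,\der z=\frac{\al}{\sqrt2}\int_{1}^{\infty}\sqrt{\tfrac{1+z^{2}}{z}}\,\der z\ge\frac{\al}{\sqrt2}\int_{1}^{\infty}\sqrt z\,\der z=+\infty$, so the surface is non-compact (open) in that direction. Hence $\Sigma$ is a smooth open surface of revolution, coordinatised by $(x,z)$ with $0\le x\le 2\pi\al$ and $0\le z\le+\infty$; the hypothesis $\al\neq0$ is needed for $g$ to be non-degenerate.

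I do not expect a genuine obstacle: this is a worked example built entirely on results already proved above. The only items needing care are the bookkeeping of the rescaling $x\mapsto\sqrt\be\,x$, $\al\mapsto\al/\sqrt\be$ — it is precisely what converts the range $[0,2\pi\al/\be]$ obtained before rescaling into the stated $[0,2\pi\al]$ — and the verification that the value $\Phi=2\pi$ yields a smooth rather than merely conically regular point at $z=0$, handled, as noted, by observing that the metric coefficients are smooth even functions of the polar radius, exactly as in the $\ga\neq0$ Kerr case of Corollary~\ref{exkerr}. Everything else is a routine specialisation.
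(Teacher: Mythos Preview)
Your proposal is correct and follows essentially the same route as the paper: specialise Proposition~\ref{nhge} to $\gamma=0$, perform the $\sqrt{\beta}$-rescaling to normalise $\beta=1$, use the conic-angle computation of Section~\ref{gil} at $z=0$ to determine the period $\chi=2\pi\alpha$, and show the surface is open by the divergent radial-length integral as $z\to\infty$. The only cosmetic difference is the order of operations---the paper computes $\chi_0=2\pi\alpha/\beta$ first and rescales afterwards, whereas you rescale first and obtain $\chi=2\pi\alpha$ directly---but the content is identical.
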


For completeness we also present an example in the Lorentzian case. From Proposition \ref{ellor} we obtain
\begin{proposition}
There is a 3-parameter family of Lorentzian generalised Ricci soliton pairs satisfying the near-horizon geometry equation. They are given by:
$$
\boxed{\begin{aligned}
g=& \left(\frac{2\beta z+\gamma (1+z^2)}{1-z^2}\right)\der x^2-\left(\frac{\al^2(1-z^2)}{2\beta z+\gamma (1+z^2)}\right)\der z^2, \\ 
X=& -\frac{1}{\al (1-z^2)^2}\left( 2 \beta z+\ga (1+z^2)\right)\der  x-\frac{z}{1-z^2} \der z 
\end{aligned}
}
$$
\end{proposition}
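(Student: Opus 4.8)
The plan is to derive this proposition as the Lorentzian counterpart of Proposition~\ref{nhge}, namely by specialising Proposition~\ref{ellor} to the near-horizon parameter values $c_1=1$, $c_2=\tfrac12$, $\la=0$ and then writing down the general solution of the resulting ODE. First I would substitute $c_2=\tfrac12$, $\la=0$ into Proposition~\ref{ellor}. The exponent $\tfrac{2c_2-1}{4c_2}$ then equals $0$, so the factor $(1-z^2)^{(2c_2-1)/4c_2}$ becomes $1$; the exponent $\tfrac{1+2c_2}{4c_2}$ equals $1$; and every term carrying a factor $\la$ drops out. Consequently $A(z)=B(z)$, the metric of Proposition~\ref{ellor} becomes $g=B(z)\,\der x^2-\tfrac{\al^2}{B(z)}\,\der z^2$, and its one-form becomes $X=-\tfrac{B(z)}{\al(1-z^2)}\,\der x-\tfrac{z}{1-z^2}\,\der z$. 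Since $c_2=\tfrac12$ gives $\ell=\tfrac{1}{2c_2}-1=0$ and $m=\ell+2=2$, the associated Legendre equation (\ref{legqelo}) takes the explicit form
\begin{equation*}
(1-z^2)B''(z)-2zB'(z)-\frac{4}{1-z^2}\,B(z)=0,
\end{equation*}
which is the Lorentzian analogue of (\ref{sinu}).

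Second, I would produce the general solution of this equation. Mimicking the pair $\tfrac{2z}{1+z^2},\ \tfrac{1-z^2}{1+z^2}$ used in the Riemannian treatment, the candidate fundamental system is
\begin{equation*}
B_1(z)=\frac{2z}{1-z^2}, \qquad B_2(z)=\frac{1+z^2}{1-z^2},
\end{equation*}
which obey the hyperbolic identity $B_2^{\,2}-B_1^{\,2}=1$. A direct substitution (a short computation) checks that $B_1$ and $B_2$ both solve the ODE, and their Wronskian equals $-2/(1-z^2)$, nonzero on $\{z\neq\pm1\}$; hence the general solution is
\[
B(z)=\be\,B_1(z)+\ga\,B_2(z)=\frac{2\be z+\ga(1+z^2)}{1-z^2}.
\]
Alternatively, for $|z|<1$ one may introduce the hyperbolic inverse stereographic projection $\cosh\theta=\tfrac{1+z^2}{1-z^2}$, $\sinh\theta=\tfrac{2z}{1-z^2}$, under which the equation turns into $B''(\theta)=B(\theta)$ with solution $\be\sinh\theta+\ga\cosh\theta$; the region $|z|>1$ is covered by the obvious real-analytic continuation, exactly as for the other Lorentzian solutions in this section.

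Finally, I would substitute $A(z)=B(z)=\tfrac{2\be z+\ga(1+z^2)}{1-z^2}$ into the simplified formulas of the first step. This gives at once
\[
g=\frac{2\be z+\ga(1+z^2)}{1-z^2}\,\der x^2-\frac{\al^2(1-z^2)}{2\be z+\ga(1+z^2)}\,\der z^2,
\]
and, using $\tfrac{B(z)}{\al(1-z^2)}=\tfrac{2\be z+\ga(1+z^2)}{\al(1-z^2)^2}$, the stated one-form $X$. Since $\al\neq 0$ and $(\be,\ga)$ are free, this is a genuine three-parameter family, and by Proposition~\ref{ellor} each member is a generalised Ricci soliton with $(c_1,c_2,\la)=(1,\tfrac12,0)$, i.e.\ solves the near-horizon geometry equation; the Gauss curvature $K$ and the Maxwell $2$-form $F$, if desired, follow by the same substitution into the corresponding formulas of Proposition~\ref{ellor}. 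I do not anticipate any serious obstacle: the only mildly delicate point is the claim that $B_1,B_2$ span the solution space of the $\ell=0$, $m=2$ associated Legendre equation, which is settled by the one-line Wronskian computation above, and the handling of the two real forms $|z|<1$ and $|z|>1$; the rest is the bookkeeping already performed in the Riemannian case.
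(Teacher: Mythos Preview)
Your proposal is correct and follows exactly the approach the paper intends: the paper simply writes ``From Proposition~\ref{ellor} we obtain'' and states the result, leaving the specialisation $c_2=\tfrac12$, $\la=0$ and the solution of the resulting $\ell=0$, $m=2$ associated Legendre equation implicit. Your derivation fills in those details precisely in the manner the paper used for the Riemannian analogue (Proposition~\ref{nhge} and the surrounding discussion of (\ref{sin}) and (\ref{sinu})), including the hyperbolic analogue of the inverse-stereographic substitution; there is nothing to add or correct.
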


\section{2D examples with null 1-form}\label{null}
We now pass to the 2-dimensional Lorentzian generalised Ricci soliton $(g,X)$, with $X_a$ being null. This case is not covered by our generalisation of Jezierski's approach in Section \ref{potential}, since we cannot rescale $X_a$ to get $\Phi_a$. We first need the following:
\begin{lemma}
Any 2-dimensional Lorentzian metric can be put into the form
\begin{equation}\label{nullg}
g=2 \der x \der y+H(x,y) dx^2.
\end{equation}
Any 1-form that is null with respect to this metric is of the form 
\begin{align*}
X=L(x,y) dx \hspace{12pt}\mbox{or} \hspace{12pt} X=\frac{1}{2}E(x,y)H(x,y) dx+E(x,y) dy
\end{align*}
for some functions $L(x,y)$ and $E(x,y)$.
\end{lemma}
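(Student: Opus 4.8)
The plan is to prove the two assertions in turn: a local normal-form statement for the metric, and then a short algebraic computation of the null condition in that normal form.

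For the first assertion I would argue as follows. Since $g$ is Lorentzian of dimension two, the null cone at each point consists of two distinct lines, so the two null directions determine two (smooth) line fields on $M$; restricting to a small contractible coordinate neighbourhood, one of them admits a nowhere-vanishing smooth section $n$, a null vector field. By the flow-box theorem (straightening of a nonvanishing vector field) there are local coordinates $(x,y)$ with $n=\partial_y$. In these coordinates $g_{yy}=g(n,n)=0$ since $n$ is null, and $g_{xy}=g(\partial_x,\partial_y)\neq 0$, for otherwise $g(\partial_y,\,\cdot\,)\equiv 0$ and $g$ would be degenerate. Hence $g=g_{xx}\,\der x^2+2g_{xy}\,\der x\,\der y$ with $g_{xy}$ nowhere zero. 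Finally I would rescale the second coordinate: take $\tilde y(x,y)$ to be a solution of $\partial_y\tilde y=g_{xy}(x,y)$ (an ODE in $y$ for each fixed $x$, solved by quadrature); this is an admissible change of coordinates since its Jacobian equals $g_{xy}\neq 0$, and $\partial_{\tilde y}$ is a nonzero multiple of $\partial_y$, hence still null, so $g_{\tilde y\tilde y}=0$, while a direct substitution gives $g_{x\tilde y}=1$. Writing $H$ for the new $g_{xx}$ and renaming $\tilde y$ as $y$ yields $g=2\,\der x\,\der y+H(x,y)\,\der x^2$.

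For the second assertion, in the coordinates just constructed the metric matrix has determinant $-1$, so the inverse metric has components $g^{xx}=0$, $g^{xy}=1$, $g^{yy}=-H$. Hence for $X=X_x\,\der x+X_y\,\der y$,
\[
g^{ab}X_aX_b=2X_xX_y-HX_y^{\,2}=X_y\bigl(2X_x-HX_y\bigr).
\]
So $X$ is null iff $X_y=0$ --- giving $X=L\,\der x$ with $L:=X_x$ --- or $2X_x-HX_y=0$ --- giving $X=\tfrac12 HX_y\,\der x+X_y\,\der y$, i.e.\ $X=\tfrac12 EH\,\der x+E\,\der y$ with $E:=X_y$. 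This is precisely the asserted list of null $1$-forms.

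I do not expect a genuine obstacle here: the second assertion is essentially a one-line computation once the normal form is available. The only step that needs care is the first one --- justifying the existence of a local nowhere-vanishing null vector field, invoking the straightening theorem, and checking that the rescaling $y\mapsto\tilde y$ is a bona fide change of coordinates --- and all of this is standard.
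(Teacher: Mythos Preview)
Your proof is correct. The second assertion is handled identically to the paper: both compute the inverse metric $(g^{xx},g^{xy},g^{yy})=(0,1,-H)$ and factor $g^{ab}X_aX_b=X_y(2X_x-HX_y)$.

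For the first assertion your route differs slightly from the paper's. The paper takes as its starting point the double-null form $g=2e^{2f}\,\der x\,\der y$ (stated without proof) and then makes a change $Y=y+h(x,y)$ with $h$ chosen so that $1+h_y=e^{2f}$, which produces the $H\,\der x^2$ term from the cross-term $h_x\,\der x$ in $\der Y$. You instead straighten a single null direction via the flow-box theorem to get $g_{yy}=0$, and then integrate $\partial_y\tilde y=g_{xy}$ to normalise $g_{x\tilde y}=1$. Your argument is more self-contained, since the double-null form itself requires an existence argument of essentially the same flavour (integrating both null line fields). The paper's version is shorter if one grants the double-null form as known. Either way the analytic content is the same: one quadrature in $y$ for each fixed $x$.
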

\begin{proof}
Any 2-dimensional Lorentzian metric can be put into the form 
\[
g=2e^{2f(x,y)}\der x \der y
\]
for some function $f$. 
By introducing new coordinate $Y=y+h(x,y)$ for some function $h$ to be determined, we find that
\[
\der Y=\der y+h_x \der x+h_y \der y=h_x \der x+(1+h_y) \der y, 
\] 
upon which a substitution yields
\[
g=\frac{2}{1+h_y}e^{2f} \der x(\der Y-h_x \der x).
\]
Solving for $h$ in  
\[
1+h_y=e^{2f}
\]
allows us to put the metric into the form
\[
g=2\der x \der Y+H(x,Y) \der x^2
\]
for some function $H(x,Y)$ and we can redefine coordinates. 
For any vector field $X$ in these coordinates, we have
\[
X=L(x,y) \der x+E(x,y) \der y
\]
for some functions $L(x,y)$ and $E(x,y)$. The condition for $X$ to be null then implies
\[
E(x,y)(2 L(x,y)-H(x,y) E(x,y))=0, 
\]
so that either $E(x,y)=0$ or $L(x,y)=\frac{1}{2}H(x,y) E(x,y)$.
\end{proof}

For the metric ansatz given by (\ref{nullg})
\begin{align*}
g=H(x,y) \der x^2+2 \der x \der y, 
\end{align*}
we plug in the null 1-form given by
\begin{align*}
X=L(x,y)\der x 
\end{align*}
into the $c_1=0$ and $c_1=1$ generalised Ricci soliton equations. 
We find that $\der x\der y$ component of the equations in both cases determines the same $L(x,y)$:
\begin{align*}
0=-\la-\frac{c_2}{2}\,{\frac {\partial ^{2}}{\partial {
y}^{2}}}H \left( x,y \right)+\frac{1}{2}\,{\frac {\partial }{\partial y}}L
 \left( x,y \right),
\end{align*}
which implies
\begin{align*}
L \left( x,y \right) =c_2\,{\frac {\partial }{\partial y}}H
 \left( x,y \right) +2 \la y+f(x).
\end{align*}
A further computation shows that
\begin{align*}
F=\der X=\left(-c_2 \frac{\partial^2}{\partial y^2}H(x,y) -2 \la\right) \der x \wedge \der y.
\end{align*}

\subsection{2D Ricci solitons and homotheties}
For the metric ansatz (\ref{nullg}) and the null 1-form $X$ given by 
\begin{align}\label{nullX1}
X=(c_2\,{\frac {\partial }{\partial y}}H
 \left( x,y \right) +2 \la y+f(x)) \der x,
\end{align}
the $c_1=0$ generalised Ricci soliton equations (Ricci solitons and homotheties), reduce to a single non-linear second order PDE given by the
$\der x\der x$ component:
\begin{align}\label{c10pde}
0=&-\la H(x,y) -\frac{c_2}{2}H(x,y) {\frac {
\partial ^{2}}{\partial {y}^{2}}}H(x,y)+f'(x)+\frac{f(x)}{2}\frac{\partial}{\partial y}H(x,y)\nonumber\\
 &+\frac{1}{2}\, \left( {\frac {\partial 
}{\partial y}}H(x,y)\right) ^{2}c_2+ \left( {
\frac {\partial }{\partial y}}H(x,y)  \right) \la y+ c_2{\frac {\partial ^{2}}{\partial y\partial x}}H(x,y).
\end{align}
We want to solve this PDE. There are two cases: $c_2=0$ and $c_2\neq 0$.

If $c_2= 0$ and $\lambda\neq 0$, the generalised Ricci soliton equations are the equations for homotheties and (\ref{c10pde}) can be totally solved to obtain
\[
X= (2\la y+f(x))\der x, \qquad
H(x,y)=(2 \la y+f(x))h(x)+\frac{f'(x)}{\la}.
\]
Here $h(x)$ is an arbitrary function. The Ricci scalar for this metric is $0$, so the metric is flat. We also obtain the flat metric in the case when $c_2=0$ and $\lambda=0$. In this case $X=f(x)\der x$ and $H(x,y)=h(x)-2y (\log f(x))'$. 

If $c_2\neq 0$, the equation (\ref{c10pde}) is nonlinear in $H(x,y)$, and we can solve it only in special cases. For example, setting both $\la$ and $f(x)$ to be zero gives
\begin{align}\label{c10ricsol}
0=&-H(x,y) {\frac {
\partial ^{2}}{\partial {y}^{2}}}H(x,y)
 +\left( {\frac {\partial 
}{\partial y}}H(x,y)\right)^{2}+2{\frac {\partial ^{2}}{\partial y\partial x}}H(x,y).
\end{align}
This equation admits a solution in the form $$H(x,y)=A(x)B(y).$$ This is given by
\begin{align*}
A(x)=&\frac{1}{b-ax},\\
B(y)=&\frac{{{\rm e}^{c (y+d)}
}+2 a}{c},
\end{align*}
where $a$, $b$, $c$, $d$ are constants.
The Lorentzian metric given by this solution $H(x,y)=\frac{{\rm e}^{c (y+d)}
+2a}{c(b-ax)}$ admits a non-null Killing symmetry given by $\partial_x+\frac{a}{c(ax-b)}\partial_y$. 
We have
\begin{proposition}
There is a 4-parameter family of Lorentzian generalised Ricci soliton pairs satisfying the steady ($\la=0$) Ricci soliton equations given by:
$$
\boxed{\begin{aligned}
g=& 2 \der x \der y+\frac{{\rm e}^{c (y+d)}
+2a}{c(b-ax)} \der x^2
,\\ 
X=&-\frac{{\rm e}^{c(y+d)}}{b-ax} \der x.
\end{aligned}
}
$$
\end{proposition}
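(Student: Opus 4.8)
The plan is to verify directly that the displayed pair $(g,X)$ satisfies the generalised Ricci soliton equations (\ref{grs0}) with $c_1=0$, $c_2=-1$, $\la=0$, by following the reduction already carried out in this subsection and then solving the single scalar equation that survives.

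First I would invoke the Lemma: every $2$-dimensional Lorentzian metric can be brought to the form (\ref{nullg}), $g=2\,\der x\,\der y+H(x,y)\,\der x^{2}$, and a null $1$-form is of one of the two listed types; here we use $X=L(x,y)\,\der x$. With this ansatz the $\der x\,\der y$--component of (\ref{grs0}) was shown, just before (\ref{nullX1}), to force $L=c_{2}\,\partial_{y}H+2\la y+f(x)$ with $f$ an arbitrary function of $x$. Specialising to $c_{2}=-1$, $\la=0$ and taking $f\equiv 0$ gives $L=-\partial_{y}H$, i.e. $X=-\partial_{y}H\,\der x$; in particular $X_{y}=0$. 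Next I would observe that for the metric (\ref{nullg}) and this $X$ the only remaining condition is the $\der x\,\der x$--component: indeed $X_{y}=0$, the Christoffel symbols $\Gamma^{c}{}_{yy}$ vanish for (\ref{nullg}), and $R_{yy}=K g_{yy}=0$, so the $\der y\,\der y$--component of (\ref{grs0}) holds identically, while the $\der x\,\der y$--component has just been used. The $\der x\,\der x$--component is exactly (\ref{c10pde}), which for $\la=0$, $f\equiv 0$ reduces, after division by $c_{2}/2$, to (\ref{c10ricsol}),
\[
-H\,\partial_{y}^{2}H+(\partial_{y}H)^{2}+2\,\partial_{x}\partial_{y}H=0 .
\]

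The substantive step is to solve (\ref{c10ricsol}) with the multiplicative ansatz $H(x,y)=A(x)B(y)$. Substituting and dividing by $B'$ (on the generic branch $A'\not\equiv 0$, $B'\not\equiv 0$) yields
\[
\frac{B(y)B''(y)-B'(y)^{2}}{B'(y)}=\frac{2A'(x)}{A(x)^{2}},
\]
whose two sides depend on different variables, hence equal a constant which I call $2a$. The $x$-equation is $(1/A)'=-a$, so $A(x)=1/(b-ax)$. For the $y$-equation, writing $p=B'$ as a function of $B$ turns $BB''-B'^{2}=2aB'$ into the linear equation $B\,\der p/\der B=p+2a$, whose solution $p=cB-2a$ integrates once more to $B(y)=\bigl(e^{c(y+d)}+2a\bigr)/c$. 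Therefore $H=\dfrac{e^{c(y+d)}+2a}{c(b-ax)}$ and $X=-\partial_{y}H\,\der x=-\dfrac{e^{c(y+d)}}{b-ax}\,\der x$, which is precisely the claimed $4$-parameter family in the parameters $a,b,c,d$; conversely every such pair satisfies all components of (\ref{grs0}), hence is a generalised Ricci soliton. The formula $F=\der X=(-c_{2}\partial_{y}^{2}H-2\la)\,\der x\wedge\der y$, the curvature remarks, and the stated non-null Killing field $\partial_{x}+\frac{a}{c(ax-b)}\partial_{y}$ then follow by direct substitution.

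I do not anticipate a genuine obstacle: once the separable ansatz is chosen the computation is mechanical, and (\ref{c10ricsol}) --- which in general is a nonlinear PDE with no reason to be tractable --- collapses to two elementary first-order ODEs. The only care needed is to restrict to the branch $A'\not\equiv 0$, $B'\not\equiv 0$ (the complementary branches give the degenerate/flat solutions discussed separately) and to check that the integration constants produced by the two quadratures can be absorbed into $a,b,c,d$ without shrinking the family.
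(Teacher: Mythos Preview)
Your proposal is correct and follows essentially the same route as the paper: you invoke the null-form reduction to (\ref{c10ricsol}), impose the separable ansatz $H=A(x)B(y)$, and solve the resulting pair of ODEs exactly as the paper does. The only difference is that you supply the intermediate separation-of-variables algebra and the integration of $B\,\der p/\der B=p+2a$ explicitly, whereas the paper simply records the answers for $A$ and $B$.
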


In the other case that $X$ is null for the metric (\ref{nullg}), the 1-form $X$ is given by
\[
X=\frac{1}{2}E(x,y)H(x,y) \der x+E(x,y) \der y.
\]
Plugging this ansatz for $X$ into the $c_1=0$ generalised Ricci soliton equations, we find that the $\der y \der y$ component gives 
\[
E_y=0, 
\]
from which we obtain
\[
E(x,y)=f(x).
\]
The remaining equations to solve is the following PDE:
\begin{align*}
2 c_2 H_{yy} (x,y)+4 \la+H_{y}(x,y) f(x)-2 f'(x)=0.
\end{align*}
Outside the singular locus defined by $f(x)=0$, its general solution for $c_2 \neq 0$ is given by
\begin{align*}
H(x,y)=&-\frac{2 c_2 {\rm e}^{-\frac{f(x) y}{2 c_2}} h(x)}{f(x)}+2\frac{(f'(x)-2\la)y}{f(x)}+j(x),
\end{align*}
where $h(x)$ and $j(x)$ are arbitrary functions. 
For $c_2=0$, we have
\begin{align*}
H(x,y)=&2\frac{(f'(x)-2\la)y}{f(x)}+j(x),
\end{align*}
and the metric with this $H(x,y)$ is flat. 
We obtain solutions to the $c_1=0$, $c_2 \neq 0$ generalised Ricci soliton equations given by
$$
\boxed{\begin{aligned}
g=& 2 \der x \der y+\left(-\frac{2 c_2 {\rm e}^{-\frac{f(x) y}{2 c_2}} h(x)}{f(x)}+2\frac{(f'(x)-2\la)y}{f(x)}+j(x)\right) \der x^2
,\\ 
X=&\frac{1}{2}\left(-2 c_2 {\rm e}^{-\frac{f(x) y}{2 c_2}} h(x)+2(f'(x)-2\la)y+j(x)f(x)\right) \der x+f(x) \der y.
\end{aligned}
}
$$
For this class of examples, we find that
\[
F=\der X=\left(-\frac{f(x) {\rm e}^{-\frac{f(x) y}{2 c_2}}h(x)}{2}-2\la\right) \der x \wedge \der y
\]
and generically $I_1$ and $I_2$ do not vanish, so that in general $g$ has no local Killing symmetry.

\subsection{2D examples with nonvanishing quadratic term}\label{nullc11}
For the same metric ansatz (\ref{nullg}) and null 1-form (\ref{nullX1}),  the $c_1=1$ generalised Ricci soliton equations again reduce to a single non-linear second order PDE on the functions $H(x,y)$ and $f(x)$, given by the $\der x\der x$ component. The PDE is
\begin{align}\label{longpde}
&-\la H(x,y) -\frac{c_2}{2}H(x,y) {\frac {
\partial ^{2}}{\partial {y}^{2}}}H(x,y) + \left({\frac{\partial }{\partial y}}H(x,y)\right)^{2}c_2^2+4\, \left( {\frac {\partial }{\partial y}}H(x,y) 
 \right) c_2 \la y \nonumber\\
 &+4\,{\la}^{2}{y}^{2}+2f(x)c_2 \frac{\partial}{\partial y}H(x,y)
 +4 f(x) \la y+f(x)^2+\frac{f(x)}{2}\frac{\partial}{\partial y}H(x,y)+f'(x)\nonumber \\
 &+\frac{1}{2}\, \left( {\frac {\partial 
}{\partial y}}H(x,y)\right) ^{2}c_2+ \left( {
\frac {\partial }{\partial y}}H(x,y)  \right) \la y+ c_2{\frac {\partial ^{2}}{\partial y\partial x}}H(x,y)=0.
\end{align}
We aim to solve this PDE, and again we have to consider cases.

The first case is when $c_2=0$ and $\lambda\neq 0$. In this situation the PDE (\ref{longpde}) reduces to 
\begin{align*}
&-\la H(x,y)+4\,{\la}^{2}{y}^{2}
 +4 f(x) \la y+f(x)^2+ \left( {
\frac {\partial }{\partial y}}H(x,y)  \right) \la y\\
&
+\frac{f(x)}{2}\frac{\partial}{\partial y}H(x,y)+f'(x)=0.
\end{align*}
This can be solved to obtain
\begin{align*}
H \left( x,y \right) = \big( 2\la y+f(x)  \big) \big(h(x) -2\,y  \big)+\frac{f'(x)}{\la} , 
\end{align*}
with $h=h(x)$ being arbitrary function of $x$. The Lorentzian metric with this $H(x,y)$ has constant scalar curvature equal to $-8 \la$. If $c_2=0$ and $\la=0$, the general solution to (\ref{longpde}) is $H(x,y)=h(x)-2y (h(x)+(\log h(x))')$, but for such $H(x,y)$ the Lorentzian metric is flat. 

In general case, when $c_1=1$ and $c_2\neq 0$, we simplify the equation (\ref{longpde}) by restricting to situations when the integration factor $f(x)\equiv 0$. In such cases (\ref{longpde}) reduces to a PDE on $H(x,y)$, which looks like that:
\begin{align}\label{nlpde}
&-\la H(x,y) -\frac{c_2}{2}H(x,y) {\frac {
\partial ^{2}}{\partial {y}^{2}}}H(x,y) + \left({\frac{\partial }{\partial y}}H(x,y)\right)^{2}c_2^2+4\, \left( {\frac {\partial }{\partial y}}H(x,y) 
 \right) c_2 \la y\nonumber \\
 &+4\,{\la}^{2}{y}^{2}+\frac{1}{2}\, \left( {\frac {\partial 
}{\partial y}}H(x,y)\right) ^{2}c_2+ \left( {
\frac {\partial }{\partial y}}H(x,y)  \right) \la y+ c_2{\frac {\partial ^{2}}{\partial y\partial x}}H(x,y)=0.
\end{align}
Further setting $\la=0$, and using the fact that now $c_2\neq 0$, we get
\begin{align}\label{nlpde2}
-H(x,y) {\frac {
\partial ^{2}}{\partial {y}^{2}}}H(x,y) + 2\left({\frac{\partial }{\partial y}}H(x,y)\right)^{2}c_2+\left( {\frac {\partial 
}{\partial y}}H(x,y)\right)^{2}+2{\frac {\partial ^{2}}{\partial y\partial x}}H(x,y) =0.
\end{align}
While we do not know what is the general solution to the PDE (\ref{nlpde2}), we can find its particular solutions by separation of variables with $H(x,y)=A(x)B(y)$. For such an ansatz the equation (\ref{nlpde2}) reduces to two ODEs:
\begin{align*}
A'(x)=&a A(x)^2,\\
B''(y)=&\frac{2 a B'(y)}{B(y)}+(2c_2+1)\frac{B'(y)^2}{B(y)},
\end{align*}
for some constant $a$.

The first ODE has general solution given by
\begin{align*}
A(x)=&\frac{1}{b-a x},
\end{align*}
while the second ODE has either a first integral:
\begin{equation}
B'(y)=\frac{B(y)^{1+2c_2}-2 a s}{s(1+2c_2)},\label{By}
\end{equation}
when $c_2\neq -\tfrac12$, 
or a first integral:
\begin{align*}
B'(y)=&2a\log(B(y))-s,
\end{align*}
when $c_2= -\tfrac12$. Thus, in the case of the separation $H=AB$, the solutions for $B$  are given in terms of quadratures. In these solutions $b$, $c$ and $s$ are constants.

Taking the appropriate values for $c_2$, this gives new solutions to the pEW and near-horizon geometry equations. 

\begin{example}[2D metric projective structures with skew-symmetric Ricci tensor and the reduced dKP equation] We now look closer at the pEW case, in which the value of the parameters are $c_1=1$, $c_2=-1$, $\la=0$. 

One class of solutions can be obtained by specialising to the case $c_2=-1$ in equation (\ref{By}). If $c_2=-1$ the general solution to (\ref{By}) is given implicitly by:
$$2as B(y)+\log\big(1-2as B(y)\big) -4a^2 s y+c=0,$$ 
with $c={\rm const}$. We note that such $B(y)$ is related to the Lambert function $W=W(z)$, which is defined implicitly as $z=W(z){\rm e}^{W(z)}$. Since the function $W\to W {\rm e}^W$ is not injective, one has more than one solutions to the equation  $z=W(z){\rm e}^{W(z)}$. If $z$ is real, there are two branches of the Lambert function: $W_0$ defined on $[-1/{\rm e},+\infty[$, with $W_0\geq -1$, and $W_{-1}$ defined on $[-1/{\rm e},0[$, with $W_{-1}\leq -1$. In terms of the branches of the Lambert function $W_\mu$, $\mu=0,-1$, the solution for $B$ reads:
$$B(y)=\frac{1+W_\mu(-{\rm e}^{c-1}{\rm e}^{4a^2s y})}{2as}.$$
We have:
\begin{proposition}\label{LambertW}
There are two branches, $\mu=0$ or $\mu=-1$, of 4-parameter $(a\neq 0,b,c,s\neq 0)$ Lorentzian generalised Ricci soliton pairs satisfying the pEW equations (i.e. generalised Ricci soliton equations with $c_1=1,c_2=-1,\la=0$) given by:
\[
\boxed{\begin{aligned}
g=& 2 \der x \der y+\frac{1+W_\mu(-{\rm e}^{c-1}{\rm e}^{4a^2s y})}{2as(b-ax)} \der x^2,
\\
X=& \frac{-2a W_\mu(-{\rm e}^{c-1}{\rm e}^{4a^2s y})}{(b-ax)(1+W_\mu(-{\rm e}^{c-1}{\rm e}^{4a^2s y}))} \der x.
\end{aligned}}
\]
These generalised Ricci solitons have $F \neq 0$. 
\end{proposition}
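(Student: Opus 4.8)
The plan is to specialise the null $1$-form analysis of this section to the pEW parameters $(c_1,c_2,\la)=(1,-1,0)$ and to solve the resulting separated ordinary differential equations explicitly. Starting from the normal form (\ref{nullg}) furnished by the preceding lemma, together with the null $1$-form $X=L(x,y)\,\der x$, recall that the $\der x\,\der y$ component of the $c_1=1$ equations already forces $L=c_2\partial_yH+2\la y+f(x)$, which for our parameter values and for the choice $f\equiv0$ (the sub-family considered in the statement) reduces to $L=-\partial_yH$. With $f\equiv0$ and $\la=0$ the only surviving equation is the $\der x\,\der x$ component, which is exactly the PDE (\ref{nlpde2}). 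Inserting the product ansatz $H(x,y)=A(x)B(y)$ turns (\ref{nlpde2}) into the two ODEs displayed right after it; the first, $A'=aA^{2}$, has general solution $A(x)=1/(b-ax)$ with constants $a\neq0$ and $b$.

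The heart of the argument is the second ODE. I would use its first integral (\ref{By}), specialised to $c_2=-1$, which becomes the separable first-order equation $B'(y)=2a-\tfrac{1}{sB(y)}$. Separating variables as $\tfrac{sB}{2asB-1}\,\der B=\der y$ and substituting $v=2asB-1$, a single quadrature yields the implicit relation $2asB+\log(1-2asB)-4a^{2}sy+c=0$ recorded in the statement, the precise normalisation of $c$ being irrelevant. Exponentiating this relation puts it in the form $v\,e^{v}=(\mathrm{const})\,e^{4a^{2}sy}$ with $v=2asB-1$, which is precisely the defining equation of the Lambert function; hence $v=W(-e^{c-1}e^{4a^{2}sy})$ and $B(y)=\big(1+W(-e^{c-1}e^{4a^{2}sy})\big)/(2as)$. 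Since the argument of $W$ is negative (for $s\neq0$), it lies in the common domain of the two real branches $W_{0}$ and $W_{-1}$, which accounts for the two families $\mu=0,-1$. Back-substituting $H=A(x)B(y)$, and using $2asB-1=W$ to rewrite $B'=2a-\tfrac{1}{sB}=\tfrac{2aW}{W+1}$, one reads off $X=-\partial_yH\,\der x=-A(x)B'(y)\,\der x$ in exactly the boxed form, and likewise $g$.

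It remains to check $F=\der X\neq0$. For this I would invoke the formula $F=\big(-c_2\partial_y^{2}H-2\la\big)\,\der x\wedge\der y$ obtained earlier in this section, which for $(c_2,\la)=(-1,0)$ reads $F=A(x)B''(y)\,\der x\wedge\der y$. Differentiating $B'=\tfrac{2aW}{W+1}$ once more, together with $W'=2asB'=\tfrac{4a^{2}sW}{W+1}$, gives $B''=\tfrac{8a^{3}sW}{(W+1)^{3}}$, which vanishes only where $W=0$; but the argument $-e^{c-1}e^{4a^{2}sy}$ of $W$ never vanishes, so $B''\neq0$ and hence $F\neq0$ on the domain of the solution.

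The only genuine obstacle is the bookkeeping: performing the quadrature of the first integral cleanly, recognising the resulting transcendental relation as the Lambert equation, and keeping careful track of the branch structure and of the sign of the argument so that the solutions are real. One should also observe --- though this is immediate from the reduction already carried out --- that no further equations need to be verified: for this ansatz the full pEW system is equivalent to (\ref{nlpde2}) alone, so exhibiting an $H$ (and thereby an $X$) that solves (\ref{nlpde2}) completes the proof.
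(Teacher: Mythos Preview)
Your proof is correct and follows essentially the same route as the paper: reduce to the PDE (\ref{nlpde2}) via the null ansatz with $f\equiv0$, separate variables, solve $A'=aA^2$, use the first integral (\ref{By}) at $c_2=-1$, integrate to the implicit relation, and invert via the Lambert function. You supply a bit more detail than the paper does, notably the explicit substitution $v=2asB-1$ and the direct verification that $F=A(x)B''(y)\,\der x\wedge\der y\neq0$, which the paper simply asserts; your observation that the normalisation of the constant $c$ is immaterial is also apt, since the paper's stated argument of $W$ differs from the naive quadrature by a relabelling of~$c$.
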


The Lorentzian generalised Ricci solitons described by Proposition \ref{LambertW} are particular examples of solutions to (\ref{nlpde2}) with $c_2=-1$. It turns out however, that for this particular value of $c_2$ the general solution of (\ref{nlpde2}) can be found. This is because if $c_2=-1$ equation (\ref{nlpde2}) becomes:  
\begin{equation}\label{rdkp}
0=-HH_{yy}-H_y^2+2 H_{xy}=(2 H_x-H H_y)_{y},
\end{equation}
and as such has an integral 
\begin{equation}\label{rdkpi}
2 H_x-H H_y=h(x).
\end{equation}
Surprisingly, equation (\ref{rdkp}) is the reduced dispersionless KP (dKP) equation, and its integral (\ref{rdkpi}) with $h(x)\equiv 0$ is the dispersionless KdV, also called as the Riemann-Hopf equation. Equation (\ref{rdkp}) arises in the study of 3-dimensional Lorentzian Einstein-Weyl equation with $S^1$ symmetry (see Section 3.1 of \cite{dKP}).  General solution to (\ref{rdkp}) depends implicitly on one arbitrary function of one variable, say $G(y)$. Using it, after making a hodographic transformation, we can write the corresponding Lorentzian pEW generalised Ricci soliton in the form:
\begin{align*}
g= 4 (x G(y)-1) \der x \der y,
\end{align*}
where $G(y)$ is one function of one variable, with 
\begin{align*}
X=&\frac{2 G(y)}{x G(y)-1}\der x,\\
F=&\frac{2 G'}{(x G(y)-1)^2} \der x \wedge \der y.
\end{align*}
We find that the Gauss curvature is
\begin{align*}
K=\frac{G'}{2(x G(y)-1)^3}.
\end{align*}
In general, this metric does not admit a local isometry unless the invariants $I_1$ and $I_2$ vanish, which is a differential constraint on $G$:
\[
G' G^2 (3G'''G'-5(G'')^2)=0.
\]
In this case solutions are given by 
\[
G(y)=a y+b \hspace{12pt} \mbox{or} \hspace{12pt} G(y)=\frac{3}{2}c^2\sqrt{\frac{-6 c}{y+d}}+e. 
\]
Both solutions have $F$ non-zero.
To summarise, we have
\begin{proposition}\label{reduceddKP}
In addition to the two 4 parameter families of examples given in Proposition \ref{LambertW}, there is also a family of Lorentzian generalised pEW Ricci solitons depending on one function of one variable given by:
\[
\boxed{\begin{aligned}
g=& 4 (x G(y)-1) \der x \der y,
\\
X=& \frac{2 G(y)}{x G(y)-1}\der x.
\end{aligned}}
\]
Furthermore, there is a 3-parameter family of examples which admit a Killing symmetry, given by $G(y)=\frac{3}{2}c^2\sqrt{\frac{-6 c}{y+d}}+e$, and another 2-parameter family of examples which admit a Killing symmetry, given by $G(y)=ay+b$.
\end{proposition}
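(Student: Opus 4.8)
The plan is threefold: verify directly that every pair in the displayed family solves the pEW equations (the generalised Ricci soliton equations with $(c_1,c_2,\la)=(1,-1,0)$); then, using the classical curvature invariants $I_1,I_2$, single out exactly those $G$ for which the metric $g=4(xG-1)\,\der x\,\der y$ admits a local isometry; and finally count the free constants in each resulting subfamily.

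First I would verify the family. Write $g=2\rho\,\der x\,\der y$ with $\rho=2(xG(y)-1)$ and $X=L\,\der x$ with $L=4G/\rho=\tfrac{2G}{xG-1}$. Since $g$ is in double-null form, its only non-vanishing Christoffel symbols are $\Gamma^x_{xx}=\partial_x\log\rho$, $\Gamma^y_{yy}=\partial_y\log\rho$, and $K=-\rho^{-1}\partial_x\partial_y\log\rho$. As $\nd_aX_b=\nd_{(a}X_{b)}+\nd_{[a}X_{b]}$, the pEW equation $\nd_aX_b+X_aX_b+Kg_{ab}=F_{ab}$ with $F_{ab}=\nd_{[a}X_{b]}$ is equivalent to its symmetric part $\nd_{(a}X_{b)}+X_aX_b+Kg_{ab}=0$. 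In these coordinates the $\der y\,\der y$-component vanishes identically, the $\der x\,\der x$-component is $\partial_xL-(\partial_x\log\rho)L+L^2=0$, and the $\der x\,\der y$-component is $\tfrac{1}{2}\partial_yL+K\rho=0$. Using $\partial_x\rho=2G$, $\partial_y\rho=2xG'$ and the crucial identity $\rho-2xG\equiv-2$, one gets $\partial_xL=-8G^2\rho^{-2}$, $\partial_yL=-8G'\rho^{-2}$, and $K=4G'\rho^{-3}$; the $\der x\,\der x$-component then reads $(-8-8+16)G^2\rho^{-2}=0$ and the $\der x\,\der y$-component reads $(-4+4)G'\rho^{-2}=0$. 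Hence $(g,X)$ is a pEW soliton for \emph{every} $G$, and the stated expressions for $K$ and for $F=\der X=-\partial_yL\,\der x\wedge\der y$ fall out of the same computation. (This is the hodographic image of the solutions $H$ of the reduced dKP equation (\ref{rdkp}), which by its once-integrated form (\ref{rdkpi}) depend on one arbitrary function of one variable; but only the direct check is needed here.)

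Next I would handle the Killing symmetries. Recall from the discussion above that a $2$-metric admitting a non-trivial local isometry must have $I_1\propto\epsilon^{ab}\nd_aK\,\nd_bM=0$ (with $M=\nd_aK\,\nd^aK$) and $I_2\propto\epsilon^{ab}\nd_aK\,\nd_b\De K=0$; for our metric this reduces to the single ODE $G'G^2\bigl(3G'''G'-5(G'')^2\bigr)=0$. The factors $G'=0$ and $G=0$ produce flat metrics, which are excluded; on the complement we are left with $3G'''G'=5(G'')^2$. If $G''\equiv0$, then $G=ay+b$. Otherwise, wherever $G'G''\neq0$ one rewrites the ODE as $\frac{\der}{\der y}\log|G''|=\tfrac{5}{3}\frac{\der}{\der y}\log|G'|$, integrates to $G''=c_0(G')^{5/3}$, sets $v=G'$ and integrates $v^{-5/3}v'=c_0$ to obtain $v=A(y+d)^{-3/2}$, hence $G=\kappa(y+d)^{-1/2}+e$, which after relabelling $\kappa=\tfrac{3}{2}c^2\sqrt{-6c}$ is the stated form. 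For the converse I would exhibit a Killing field in each case: for $G=ay+b$ with $a\neq0$ the vector $\xi=x\partial_x-(y+\tfrac{b}{a})\partial_y$ satisfies $\cL_\xi g=0$ by direct verification, and for $G=\kappa(y+d)^{-1/2}+e$ the vector $\xi=(1-ex)\partial_x+2(y+d)G\,\partial_y$ does likewise. The first subfamily depends on the two constants $(a,b)$, the second on the three constants $(c,d,e)$; combining this with Proposition~\ref{LambertW} completes the proof.

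Everything except the last point is routine bookkeeping; the one genuine obstacle is \emph{sufficiency} in the Killing analysis, since vanishing of $I_1,I_2$ is \emph{a priori} only necessary. Rather than appeal to the abstract ``local surface of revolution'' characterisation, the cleanest route is to note that $\cL_\xi g=0$ on a metric of the form $g=2\rho\,\der x\,\der y$ already forces $\xi=\xi^x(x)\partial_x+\xi^y(y)\partial_y$, so one is reduced to the single scalar equation $\xi^x\partial_x\rho+\xi^y\partial_y\rho+\rho(\partial_x\xi^x+\partial_y\xi^y)=0$; this is immediate for $G=ay+b$, while for $G=\kappa(y+d)^{-1/2}+e$ the various powers of $(y+d)$ must be matched carefully, which is precisely what forces $\xi^y=2(y+d)G$.
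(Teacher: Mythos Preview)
Your proof is correct and in fact more complete than the paper's own treatment, but the route differs in two places worth noting.

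For the first part, the paper \emph{derives} the family: it observes that with $c_2=-1$ the PDE (\ref{nlpde2}) becomes the reduced dKP equation $(2H_x-HH_y)_y=0$, integrates once to obtain a quasilinear first-order PDE depending on one arbitrary function, and then performs a hodographic transformation to put the solution into the form $g=4(xG(y)-1)\,\der x\,\der y$. You instead take the displayed $(g,X)$ as given and verify the pEW equation directly. Your computation is clean and self-contained (the identity $\rho-2xG=-2$ does all the work), and for the purposes of proving the proposition it is entirely sufficient; what is lost is only the explanation of where the family comes from and why it depends on one function of one variable.

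For the Killing analysis, the paper simply states the differential constraint $G'G^2(3G'''G'-5(G'')^2)=0$ coming from $I_1=I_2=0$ and writes down the two solution branches without derivation, implicitly relying on the classical fact that vanishing of $I_1,I_2$ characterises surfaces of revolution. You do more: you solve the ODE $3G'''G'=5(G'')^2$ by quadrature, and then bypass the sufficiency issue altogether by exhibiting the Killing fields explicitly. Your observation that a Killing vector for $g=2\rho\,\der x\,\der y$ must have $\xi^x=\xi^x(x)$, $\xi^y=\xi^y(y)$ reduces the problem to a single scalar equation, and your candidates $\xi=x\partial_x-(y+b/a)\partial_y$ and $\xi=(1-ex)\partial_x+2(y+d)G\,\partial_y$ check out (for the second, $2(y+d)G'=e-G$ and $\partial_x\xi^x+\partial_y\xi^y=G$ are the identities that make everything cancel). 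This constructive verification is a genuine improvement over the paper's appeal to the abstract $I_1,I_2$ criterion.
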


The local obstructions to pEW generalised Ricci solitons derived in \cite{skewricci2D} all vanish for these examples.
\end{example}

\begin{example}[2D near-horizon geometry equation]
When $c_2=\frac{1}{2}$, we obtain a 2D non-static Lorentzian solution of the vacuum near-horizon geometry equation with no Killing symmetry. They again come form the separation $H(x,y)=A(x)B(y)$. If $c_2=\tfrac12$ the general solution to (\ref{By}) is given by:
$$B(y)=-\sqrt{2as}\tanh(\frac{\sqrt{a}(y+c)}{\sqrt{2s}}),$$
with $c={\rm const}$. This leads to the following 
\begin{proposition}
There is a 4-parameter $(a,b,c,s)$ family of Lorentzian generalised Ricci soliton pairs satisfying the vacuum near-horizon geometry equations given by:
\[
\boxed{\begin{aligned}
g=& 2 \der x \der y+\frac{\sqrt{2as}\tanh(\frac{\sqrt{a}(y+c)}{\sqrt{2s}})}{ax-b}\der x^2,
\\
X=& \frac{a~{\rm sech}^2(\frac{\sqrt{a}(y+c)}{\sqrt{2s}})}{2(ax-b)} \der x.
\end{aligned}}
\]
For this solutions $F\neq 0$. A computation shows that the Lorentzian metric $g$ admits no local Killing symmetry since the invariants $I_1$ and $I_2$ do not vanish. 
\end{proposition}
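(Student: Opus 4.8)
The plan is to obtain this family as the specialisation $c_2=\tfrac12$, $\la=0$, $f(x)\equiv 0$ of the null-$X$, $c_1=1$ analysis carried out earlier in this section, and then to verify the two auxiliary assertions ($F\ne 0$ and the non-existence of a local Killing vector) by direct computation.

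First I would invoke the reduction already established for the normal form (\ref{nullg}) with null $1$-form $X=L(x,y)\der x$ and $c_1=1$: the $\der x\der y$ component of the generalised Ricci soliton equation fixes $L=c_2H_y+2\la y+f(x)$, which for our parameter values is $L=\tfrac12 H_y$; the $\der y\der y$ component reduces to $0=0$ (since $X_y=g_{yy}=0$ and $R_{yy}=Kg_{yy}=0$); and the $\der x\der x$ component is the PDE (\ref{longpde}), which for $f\equiv 0$, $\la=0$ collapses to (\ref{nlpde2}) with $c_2=\tfrac12$, namely $-HH_{yy}+2H_y^2+2H_{xy}=0$. Next I would separate variables, $H(x,y)=A(x)B(y)$, turning (\ref{nlpde2}) into the two ODEs $A'=aA^2$ and $B''=\tfrac{2a}{B}B'+(2c_2+1)\tfrac{(B')^2}{B}$. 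The first gives $A(x)=\tfrac1{b-ax}$. For the second, the first integral (\ref{By}) with $c_2=\tfrac12$ reads $B'=\tfrac{B^2-2as}{2s}$, which is separable and integrates, with integration constant $c$, to $B(y)=-\sqrt{2as}\,\tanh\!\big(\tfrac{\sqrt a\,(y+c)}{\sqrt{2s}}\big)$. Substituting $H=A(x)B(y)$ and $X=\tfrac12 H_y\,\der x$ and simplifying reproduces the boxed $g$ and $X$; since $H=AB$ solves (\ref{nlpde2}) by the separation construction and $L=\tfrac12 H_y$ solves the $\der x\der y$ equation, all components of the near-horizon equation hold, which proves the first sentence.

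For $F\ne 0$ I would use $F=\der X=(-c_2H_{yy}-2\la)\,\der x\wedge\der y=-\tfrac12 H_{yy}\,\der x\wedge\der y$; one further differentiation of the explicit $H$ gives $H_{yy}$ equal to a nonzero constant times $\tfrac1{ax-b}\,{\rm sech}^2\!\big(\tfrac{\sqrt a\,(y+c)}{\sqrt{2s}}\big)\tanh\!\big(\tfrac{\sqrt a\,(y+c)}{\sqrt{2s}}\big)$, which is not identically zero, hence $F\ne 0$. For the absence of a local isometry I would compute the Gauss curvature $K$ (a nonzero multiple of $H_{yy}$ for this normal form), observe that it depends nontrivially on both $x$ and $y$ and so is non-constant, and then evaluate the classical invariants of Liouville and Darboux, $\rho=\epsilon^{ab}\nd_aK\,\nd_bM$ with $M=\nd_aK\,\nd^aK$ (the invariant $I_1$ up to a nonzero factor) and $\epsilon^{ab}\nd_aK\,\nd_b\De K$ (the invariant $I_2$ up to a nonzero factor); since the vanishing of $\rho$ is necessary for $(M,g)$ to admit a local Killing vector (cf.\ the 2D Killing-equations subsection), it suffices to exhibit a point where $\rho\ne 0$, and likewise for $I_2$.

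The ODE integrations and the check that $H=AB$ solves the reduced PDE are routine. The main obstacle is purely computational: producing closed forms for $K$, $M$, $\De K$ and thence for $I_1$ and $I_2$ for this metric is a lengthy symbolic calculation, best organised by setting $w:=ax-b$ and $u:=\tfrac{\sqrt a\,(y+c)}{\sqrt{2s}}$, expressing everything in $w$, ${\rm sech}^2u$ and $\tanh u$, and using $g^{xx}=0$, $g^{xy}=1$, $g^{yy}=-H$. One has to confirm that no cancellation forces $I_1$ or $I_2$ to vanish identically; since the metric genuinely depends on both coordinates one expects both to be nonzero functions, but this must be checked rather than assumed.
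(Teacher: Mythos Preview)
Your approach is correct and is essentially identical to the paper's own derivation: both specialise the $c_1=1$, null-$X$ analysis of Section~\ref{nullc11} to $c_2=\tfrac12$, $\la=0$, $f\equiv 0$, separate variables in (\ref{nlpde2}), solve $A'=aA^2$ and the first integral (\ref{By}) with $c_2=\tfrac12$, and then verify $F\neq 0$ and the nonvanishing of $I_1$, $I_2$ by direct computation. The paper likewise leaves the $I_1$, $I_2$ check as an unwritten ``computation shows'' step, so your honest acknowledgement that this is a tedious but routine symbolic verification matches the level of detail in the source.
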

\end{example}

Finally, let us consider the case $c_1=1$ and the null vector $X$ given by 
\[
X=\frac{1}{2}E(x,y)H(x,y) \der x+E(x,y) \der y.
\]
Plugging this ansatz for $X$ into the $c_1=1$ generalised Ricci soliton equations, we find that the $\der y \der y$ component gives 
\[
E_y+E^2=0, 
\]
from which we obtain
\[
E(x,y)=\frac{1}{y+f(x)}.
\]
This gives the 1-form 
$$X=\frac{\der y+\tfrac12 H(x,y)\der x}{y+f(x)}.$$
The remaining equations to solve are equivalent to the following PDE:
\begin{align*}
2\big( c_2 H_{yy} (x,y)+2 \la\big) \big(y+f(x)\big)^2+H_{y}(x,y) f(x)+H_{y}(x,y) y+2 f'(x)-H(x,y)=0.
\end{align*}
A solution for $c_2 \neq 0,-\frac{1}{2}, -\frac{1}{4}$ is given by
$$\begin{aligned}
H(x,y)=&-\frac{4\la\big(y+f(x)\big)\big((1+2c_2)y-2c_2f(x)\big)}{(1+2c_2)(1+4c_2)}\\&+\big(y+f(x)\big)h(x)+\big(y+f(x)\big)^{-\tfrac{1}{2c_2}}j(x)+2f'(x),
\end{aligned}$$
where $h(x)$ and $j(x)$ are arbitrary functions. For this $H(x,y)$,  the Gauss curvature of the Lorentzian metric is
$$K=\tfrac12\Big(-\frac{8\la}{1+4c_2}+\frac{(1+2c_2)(y+f(x))^{-\tfrac{1+4c_2}{2c_2}}j(x)}{4c_2^2}\Big),$$
and the 2-form $F$ is:
$$F=\tfrac14\Big(\frac{8\la}{1+4c_2}+\frac{(1+2c_2)(y+f(x))^{-\tfrac{1+4c_2}{2c_2}}j(x)}{c_2}\Big)\der x\wedge\der y.$$
When $c_2=0$, we obtain the solution
$$\begin{aligned}
H(x,y)=\big(y+f(x)\big)\Big(h(x)-4\la\big(y+f(x)\big)\Big)+2f'(x),
\end{aligned}$$
where $h(x)$ is an arbitrary function. This gives a Lorentzian metric with constant Gauss curvature $K=-4\la$. Even here, if $\la\neq 0$, the corresponding generalised Ricci soliton is non-gradient as:
$$F=2\la \der x\wedge\der y.$$

When $c_2=-\frac{1}{2}$, we obtain the solution
$$\begin{aligned}
H&(x,y)=\\
&\big(y+f(x)\big)\Big(4\la y-4\la f(x)\big(\log(y+f(x))-1\big)+h(x)+j(x)\log(y+f(x))\Big)\\&+2f'(x),
\end{aligned}$$
where $h(x)$ and $j(x)$ are arbitrary functions. Here we obtain a Lorentzian metric with Gauss curvature
$$K=2\la+\frac{4\la y+j(x)}{2(y+f(x))},$$
and the 2-form $F$ is:
$$F=-\frac{4\la y+j(x)}{2(y+f(x))}\der x\wedge\der y.$$
When $c_2=-\frac{1}{4}$, we obtain the solution
$$\begin{aligned}
H&(x,y)=\big(y+f(x)\big)\\&\times\Big(h(x)+~y~\big(~8\la\big(\log(y+f(x))-1\big)+j(x)~\big)+f(x)~\big(~8\la\log(y+f(x))+j(x)~\big)\Big)\\&+2f'(x),
\end{aligned}$$
where $h(x)$ and $j(x)$ are arbitrary functions. Now the Gauss curvature of the Lorentzian metric is
$$K=4\la+8\la\log(y+f(x))+j(x),$$
and the 2-form $F$ is:
$$F=-\tfrac12(8\la\log(y+f(x))+j(x))\der x\wedge\der y.$$

To summarise, the corresponding $c_1=1$ generalised Ricci solitons with the other null $X$ are given by
\[
\boxed{\begin{aligned}
g=& 2 \der x \der y+H(x,y) \der x^2,
\\
X=& \frac{\der y+\tfrac12 H(x,y)\der x}{y+f(x)},
\end{aligned}}
\]
where
\begin{align*}
&\boxed{\begin{aligned}H(x,y)=&-\frac{4\la\big(y+f(x)\big)\big((1+2c_2)y-2c_2f(x)\big)}{(1+2c_2)(1+4c_2)}\\
&+\big(y+f(x)\big)h(x)+\big(y+f(x)\big)^{-\tfrac{1}{2c_2}}j(x)+2f'(x),\quad{\rm if} \quad c_2 \neq -\frac{1}{4}, -\frac{1}{2}, 0,
\end{aligned}}\\
&\boxed{\begin{aligned}
H(x,y)=\big(y+f(x)\big)\Big(h(x)-4\la\big(y+f(x)\big)\Big)+2f'(x),
\quad{\rm if} \quad c_2 =0,
\end{aligned}}\\
&\boxed{\begin{aligned}
H&(x,y)=\big(y+f(x)\big)\\&\times\Big(h(x)+\big(8\la\big(\log(y+f(x))-1\big)+j(x)\big)y+f(x)\big(8\la\log(y+f(x))+j(x)\big)\Big)\\&+2f'(x),
\quad{\rm if} \quad c_2 =-\frac{1}{4},
\end{aligned}}\\
&\boxed{\begin{aligned}
H(x,y)=&\big(y+f(x)\big)\Big(4\la y-4\la f(x)\big(\log(y+f(x))-1\big)+h(x)+j(x)\log(y+f(x))\Big)\\
&+2f'(x),
\quad{\rm if} \quad c_2 =-\frac{1}{2}.
\end{aligned}}
\end{align*}

\section{Summary and outlook}
Motivated by the method outlined in Section \ref{potential}, we obtain explicit examples of generalised Ricci solitons in 2 dimensions. We also obtain explicit examples in Lorentzian signature with $X$ null. The next step is to obtain higher dimensional generalised Ricci solitons. Following the work of \cite{KL0}, \cite{KL1}, \cite{KL2} and \cite{KL3} in constructing explicit examples of higher dimensional cohomogeneity-1 metrics satisfying the near-horizon geometry equations, we are able to get explicit generalised Ricci solitons in higher dimensions. However the presentation of this work will be left elsewhere. 

\section*{Acknowledgements}
Both authors would like to acknowledge Wojciech Kry\'nski for organising the workshop ``Geometry of Projective Structures and Differential Equations" in Warsaw, where this work was initialised. We also wish to thank Piotr Chru\'sciel, Jacek Jezierski and Paul Tod for helpful discussions. Special thanks are due to Gil Bor for his help in Section \ref{gil}.

\end{document}